\newcommand{\attach}{+}
\newtheorem{theorem}{Theorem}[section]
\newtheorem{corollary}[theorem]{Corollary} 
\newtheorem{lemma}[theorem]{Lemma}
\newtheorem{proposition}[theorem]{Proposition}
\theoremstyle{definition}
\newtheorem{definition}[theorem]{Definition}
\newtheorem{remark}[theorem]{Remark}
\newtheorem{example}[theorem]{Example}
\newtheorem{problem}[theorem]{Problem}
\newcommand{\kar} {{\rm char}}
\newcommand{\C}{{\mathbb C}}
\newcommand{\Z}{{\mathbb Z}}
\newcommand{\Mat}{\operatorname{Mat}}
\newcommand{\Rep}{\operatorname{Rep}}
\newcommand{\GL}{\operatorname{GL}}
\newcommand{\SL}{\operatorname{SL}}
\newcommand{\Sym}{\operatorname{Sym}}
\newcommand{\Vect}{\operatorname{Vect}}
\newcommand{\spa}{\operatorname{span}}
\newcommand{\llambda} {\lambda^\dag}
\newcommand{\Spec}{{\rm Spec}}
\newcommand{\N} {\mathcal{N}}
\title{Weyl's polarization theorem in positive characteristic}
\author{Harm Derksen and Visu Makam}
\thanks{This material is based upon work supported by the National Science Foundation under Grant No. DMS-1601229 and DMS-1638352}
\begin{document}

\begin{abstract}
Let $V$ be an $n$-dimensional algebraic representation over an algebraically closed field $K$ of a group $G$.  For $m > 0$, we study the invariant rings $K[V^{ m}]^G$ for the diagonal action of $G$ on $V^{m}$. In characteristic zero, a theorem of Weyl tells us that we can obtain all the invariants in $K[V^{ m}]^G$ by the process of polarization and restitution from $K[V^{ n}]^G$. In particular, this means that if $K[V^{ n}]^G$ is generated in degree $\leq d$, then so is $K[V^{ m}]^G$ no matter how large $m$ is.

There are several explicit counterexamples to Weyl's theorem in positive characteristic.
However, when $G$ is a (connected) reductive affine group scheme over $\Z$ and $V^*$ is a good $G$-module, we show that Weyl's theorem holds in sufficiently large characteristic. As a special case, we consider the ring of invariants $R(n,m)$ for the left-right action of $\SL_n \times \SL_n$ on $m$-tuples of $n \times n$ matrices. In this case, we show that the invariants of degree $\leq n^6$ suffice to generate $R(n,m)$ if the characteristic is larger than $2n^6 + n^2$. 
\end{abstract}

\maketitle

\section{Introduction}
Let $K$ be an algebraically closed field. Suppose $V$ is a rational representation of a reductive group $G$. The ring of invariant polynomials $K[V]^G$ is a finitely generated graded subalgebra of the coordinate ring $K[V]$, see \cite{Haboush,Hilbert1,Hilbert2,Nagata}. A long standing theme in invariant theory is to extract a minimal set of generators -- apart from a few instances, this is perhaps too ambitious a problem. A more approachable problem is to find upper bounds on the degree of generators.

\begin{definition}
We define $\beta(K[V]^G)$ to be the smallest integer $D$ such that invariants of degree $\leq D$ form a generating set, i.e.,
$$
\beta(K[V]^G) := \min \{D\  |\  K[V]_{\leq D}^G \text{ is a generating set}\},
$$
where $K[V]^G_{\leq D}$ denotes the invariants of degree $\leq D$.
\end{definition}

A general bound for $\beta(K[V]^G)$ is given in \cite{Derksen1}. In this paper, we will be concerned with the growth of $\beta(K[V^m]^G)$ as $m$ gets large, where $V^m$ denotes the direct sum of $m$ copies of the representation $V$. It is easy to see that $\beta(K[V^a]^G) \leq \beta(K[V^b]^G)$ if $a \leq b$, and so for fixed $G$ and $V$, the sequence $\beta(K[V^m]^G)$ is increasing. In characteristic $0$, it is a remarkable result due to Weyl (see \cite{Weyl,KP}) that this sequence is actually bounded! 

\begin{theorem} [Weyl's polarization theorem -- weak form] \label{theo:weyl}
Assume $\kar (K) = 0$, and let $\dim V = n$. Then for all $m$, we have $\beta(K[V^m]^G) \leq \beta(K[V^n]^G)$. 
\end{theorem}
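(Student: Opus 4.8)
The plan is to show that $K[V^m]^G$ is generated, as an algebra, by the polarizations of the elements of $K[V^n]^G$, and then to note that polarization does not change the total degree of an invariant. To set up polarization cleanly, write $V^m = K^m \otimes_K V$ and let $\GL_m$ act through the $K^m$ factor. This action commutes with $G$, so $\GL_m$ acts on the graded algebra $K[V^m]$ by degree-preserving $K$-algebra automorphisms and preserves $K[V^m]^G$; the classical polarization operators are exactly the derivations of $K[V^m]$ coming from $\mathfrak{gl}_m$. Fixing a coordinate subspace $K^n \subseteq K^m$ identifies $K[V^n]^G$ with the subalgebra of $K[V^m]^G$ of invariants that do not involve the last $m-n$ copies of $V$.

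The crux is the claim that, for each degree $d$, the homogeneous component $K[V^m]^G_d$ is generated as a $\GL_m$-module by $K[V^n]^G_d$; equivalently, every multihomogeneous invariant of $V^m$ of degree $d$ is a linear combination of polarizations of invariants of $V^n$ of degree $d$. I would prove this via the Cauchy decomposition (valid since $\kar (K)=0$): naturally in $K^m$ one has $S^d\big((K^m)^*\otimes V^*\big)\cong\bigoplus_{\lambda\vdash d}S_\lambda\big((K^m)^*\big)\otimes S_\lambda(V^*)$, and taking $G$-invariants (which $G$ affects only through the $V^*$ factor) gives $K[V^m]^G_d\cong\bigoplus_{\lambda\vdash d}S_\lambda\big((K^m)^*\big)\otimes\big(S_\lambda(V^*)\big)^G$. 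The one place where $n=\dim V$ enters is the vanishing $S_\lambda(V^*)=0$ whenever $\lambda$ has more than $n$ rows: it forces every $\GL_m$-isotype occurring in $K[V^m]^G_d$ to be indexed by a partition with at most $n$ rows, and for such $\lambda$ the irreducible $\GL_m$-module $S_\lambda((K^m)^*)$ already contains the nonzero subspace $S_\lambda((K^n)^*)$, hence is generated over $\GL_m$ by it. Summing over $\lambda$, and using naturality of the Cauchy decomposition to match $K[V^n]^G_d$ with $\bigoplus_\lambda S_\lambda((K^n)^*)\otimes(S_\lambda(V^*))^G$, proves the claim.

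Granting the claim, the remainder is bookkeeping. It shows $K[V^m]^G$ is generated as an algebra by $\bigcup_{d}\,\GL_m\cdot K[V^n]^G_d$. Now set $d_0:=\beta(K[V^n]^G)$, so that $K[V^n]^G$ is generated in degrees $\le d_0$. Any $h\in K[V^n]^G_d$ is then a sum of products of elements of $K[V^n]^G_{\le d_0}$; applying $g\in\GL_m$, which acts by an algebra automorphism, expresses $g\cdot h$ as the same sum of products of $\GL_m$-translates of elements of $K[V^n]^G_{\le d_0}$. Hence $K[V^m]^G$ is already generated by $\GL_m\cdot K[V^n]^G_{\le d_0}$, which consists of invariants of degree $\le d_0$ because $\GL_m$ preserves degree. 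Therefore $\beta(K[V^m]^G)\le d_0=\beta(K[V^n]^G)$, as desired.

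The only genuinely substantive step is the claim of the second paragraph: it is precisely the assertion that, because $V$ is $n$-dimensional, invariants in $n$ vector variables already realize every $\GL_m$-type appearing among invariants in $m$ vector variables, so they suffice to generate all of $K[V^m]^G$ under polarization. If one wishes to avoid Schur--Weyl theory, the same step admits the classical treatment: fully polarize a multihomogeneous invariant of $V^m$ to a multilinear invariant of $V^k$ with $k$ its degree, then repeatedly use that any $n+1$ vectors of $V$ are linearly dependent --- promoted to a polynomial identity by multilinearity --- to rewrite a multilinear invariant of $V^k$ with $k>n$ as a combination of restitutions of multilinear invariants of $V^{k-1}$, descending to $V^n$. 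Either way, the core difficulty is this dimension-driven stabilization of the set of relevant $\GL_m$-types, and I expect the representation-theoretic formulation to be the one best suited to the positive-characteristic generalization pursued later.
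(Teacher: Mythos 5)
Your proposal is correct and takes essentially the same route as the paper's Section~\ref{Sec:Weyl0}: Cauchy decomposition, the vanishing $S_\lambda(V^*)=0$ for $l(\lambda)>n$, and irreducibility of the $\GL_m$-factor in characteristic zero to conclude that the $\GL_m$-span of $K[V^n]^G_d$ fills $K[V^m]^G_d$. The alternative polarization-and-restitution argument you sketch at the end is a genuinely different (more classical) approach, but your main argument is the one the paper uses.
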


Weyl's result is actually a little stronger than the version we state above, which we will now discuss. Interpreting $V^m$ as $V \otimes K^m$ illuminates a $\GL_m$ action on $V^m$. Since this $\GL_m$ action commutes with the $G$ action, the invariant ring $K[V^m]^G = K[V \otimes K^m]^G$ inherits an action of $\GL_m$. For $a \leq b$, we have the inclusion $K[V^a]^G \subseteq K[V^b]^G$. Suppose $S$ is a generating set for $K[V^a]^G$. Starting with $S$, we can construct some obvious invariants in $K[V^b]^G$. For example, take any $f \in S$ and $g \in \GL_b$, then $g \cdot f \in K[V^b]^G$. In the same spirit, we can consider the smallest $\GL_b$-stable subspace  containing $S$, i.e., $\left<S\right>_{\GL_b} \subseteq K[V^b]^G$ (see Section~\ref{Sec:pol} for a more detailed definition). Consider the subalgebra of $K[V^b]^G$ generated by $\left<S\right>_{\GL_b}$. This subalgebra may not be all of $K[V^b]^G$, and there may be some other `genuinely new' invariants in $K[V^b]^G$. Weyl's polarization theorem says that there are no genuinely new invariants if we take $a$ at least as big as $n$.


\begin{theorem} [Weyl's polarization theorem -- strong form]
Assume $\kar (K) = 0$, and let $\dim V = n$. Let $S \subseteq K[V^n]^G$ be a generating set for $K[V^n]^G$. Then for all $m \geq n$, the set $\left<S \right>_{\GL_m}$ is a generating set for $K[V^m]^G$.
\end{theorem}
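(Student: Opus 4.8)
The plan is to show that the subalgebra $A\subseteq K[V^m]^G$ generated by $\langle S\rangle_{\GL_m}$ is all of $K[V^m]^G$, working entirely through the $\GL_m$‑isotypic decomposition; characteristic zero is essential precisely because it makes the polynomial $\GL_m$‑representations into direct sums of the Schur functors $S_\lambda(K^m)$, which are irreducible and pairwise non‑isomorphic. Writing $K[V^m]=\Sym(V^*\otimes K^m)$, the Cauchy formula gives $K[V^m]^G=\bigoplus_\lambda M_\lambda\otimes S_\lambda(K^m)$ with multiplicity spaces $M_\lambda:=(S_\lambda V^*)^G$, the group $G$ acting only on the first tensor factor. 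Since $\dim V=n$, one has $S_\lambda V^*=0$, hence $M_\lambda=0$, whenever $\ell(\lambda)>n$, while for $\ell(\lambda)\le n$ the factor $S_\lambda(K^m)$ is nonzero exactly because $m\ge n$. This is the single point where the hypotheses $\dim V=n$ and $m\ge n$ are used: the multiplicity spaces appearing in $K[V^m]^G$ are the same, and all already present, for every $m\ge n$, including $m=n$.

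Because $A$ is graded (the scalars in $\GL_m$ enforce this) and $\GL_m$‑stable, its degree‑$d$ component $A_d$ is a $\GL_m$‑submodule of $\bigoplus_{|\lambda|=d}M_\lambda\otimes S_\lambda(K^m)$, hence equals $\bigoplus_{|\lambda|=d}N^{(m)}_\lambda\otimes S_\lambda(K^m)$ for uniquely determined subspaces $N^{(m)}_\lambda\subseteq M_\lambda$ (using complete reducibility and distinctness of the $S_\lambda(K^m)$). The crux is a \emph{stability} statement: $N^{(m)}_\lambda$ is independent of $m$ for all $m\ge\ell(\lambda)$. Granting it, for $\lambda$ with $\ell(\lambda)\le n$ we get $N^{(m)}_\lambda=N^{(n)}_\lambda$ for every $m\ge n$, and $N^{(n)}_\lambda=M_\lambda$ because the algebra generated by $\langle S\rangle_{\GL_n}$ inside $K[V^n]^G$ is all of $K[V^n]^G$ (as $S\subseteq\langle S\rangle_{\GL_n}$ already generates it); for $\ell(\lambda)>n$ both sides vanish. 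Hence $A=K[V^m]^G$.

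To prove stability I would compare with the ``stable'' invariant ring $R_\infty:=\Sym(V^*\otimes K^\infty)^G=\bigoplus_\lambda M_\lambda\otimes S_\lambda(K^\infty)$, from which each $K[V^m]^G$ is recovered as the sum of the components with $\ell(\lambda)\le m$. Two $m$‑independent structure computations feed in. First, for a homogeneous invariant $s$ with isotypic component $s_\mu\in M_\mu\otimes S_\mu(K^n)\subseteq M_\mu\otimes S_\mu(K^m)$, complete reducibility forces $\langle s_\mu\rangle_{\GL_m}=U_\mu(s)\otimes S_\mu(K^m)$ for a subspace $U_\mu(s)\subseteq M_\mu$, and sandwiching $s_\mu$ between $\langle s_\mu\rangle_{\GL_m}$ and $M_\mu\otimes S_\mu(K^m)$ (a Goursat‑type argument, using distinctness of the $S_\mu(K^m)$) identifies $U_\mu(s)$ as the image of the contraction determined by $s_\mu$, which manifestly does not depend on $m$; summing over $s\in S$ gives an $m$‑independent description of $(\langle S\rangle_{\GL_m})_e$. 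Second, multiplication on $K[V^m]^G$ is the restriction of multiplication in $\Sym(V^*\otimes K^\infty)$, hence is $\GL_m$‑equivariant and, on Schur factors, governed by the Littlewood–Richardson coefficients, which are stable once $m\ge\ell(\nu)$, while the accompanying maps on multiplicity spaces are read off from $R_\infty$ and are therefore intrinsic. Combining these, $A_d=\bigoplus_{|\nu|=d}N_\nu\otimes S_\nu(K^m)$ with $N_\nu$ the $m$‑independent span of the images of all iterated products of the $U_\mu(s)$'s, which is the stability statement. The step I expect to demand the most care is exactly this last comparison — keeping the duals consistent and verifying rigorously that the Littlewood–Richardson data and the multiplication maps on the $M_\lambda$'s are genuinely the same for $K[V^m]^G$ as for $R_\infty$ whenever $m$ is at least the relevant partition length.
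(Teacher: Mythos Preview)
Your argument is correct in outline, but it takes a substantially longer route than the paper's. You and the paper share the setup: the Cauchy decomposition $K[V^m]^G = \bigoplus_{\ell(\lambda)\le n} M_\lambda \otimes S_\lambda(K^m)$ with $M_\lambda = S_\lambda(V^*)^G$, and the observation that the subalgebra $A$ generated by $\langle S\rangle_{\GL_m}$ is $\GL_m$-stable. The divergence is in what comes next.

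The paper's shortcut is this: since $A$ is an \emph{algebra} containing $S$, and $S$ generates $K[V^n]^G$, we already have $K[V^n]^G \subseteq A$. Since $A$ is also $\GL_m$-stable, $A \supseteq \langle K[V^n]^G\rangle_{\GL_m}$. So the only thing left is to check that $\langle K[V^n]^G\rangle_{\GL_m} = K[V^m]^G$ as a \emph{vector space}, and by the isotypic decomposition this reduces to $\langle S_\lambda(K^n)\rangle_{\GL_m} = S_\lambda(K^m)$ for each $\lambda$ with $\ell(\lambda)\le n$ --- immediate from the irreducibility of $S_\lambda(K^m)$ and the nonvanishing of $S_\lambda(K^n)$. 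No multiplication on multiplicity spaces, no Littlewood--Richardson, no $R_\infty$, no stability comparison across different values of $m$.

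Your route instead tracks how $A$ is built up degree by degree from the generators via multiplication, proving that the multiplicity spaces $N_\lambda^{(m)}$ are independent of $m$ by invoking LR stability and an $m$-independent description of the product maps $M_\mu \otimes M_\nu \to M_\lambda$. This is correct (and the step you flag as delicate --- matching the multiplication maps with those read off from $R_\infty$ --- does go through, since the Cauchy isomorphism is functorial in the $K^m$ slot), but it is machinery that the one-line observation ``$A$ is an algebra containing $S$, hence contains all of $K[V^n]^G$'' renders unnecessary. What your approach buys is a finer structural picture of how the subalgebra grows; what the paper's approach buys is a proof that fits in a paragraph.
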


It is easy to see that the weak form of Weyl's theorem stated before is a consequence of the strong form stated above. In positive characteristic, one does not have to look far to get counterexamples. Suppose $\kar (K) = p > 0$. Let $C_p$ denote the cyclic group of order $p$, and consider the action of $C_p$ on $V = K^2$ where the generator of $C_p$ acts by the matrix $\begin{pmatrix} 1 & 1 \\ 0 & 1 \end{pmatrix}.$ Weyl's theorem fails in this case, see \cite{Richman}. Other examples of failure for finite groups can be found in \cite{Wehlau}. We note that finite groups are reductive in arbitrary characteristic. Knop showed in \cite{Knop} that the strong form of Weyl's theorem holds for invariant rings of finite groups if the characteristic 
is large enough. 

In this paper, we will restrict ourselves to connected reductive groups. Even in this restricted setting, Weyl's theorem still fails. For example, in characteristic $2$, it fails for the natural action of $G = {\rm SO}(V)$ on $V$ where $V$ is a four dimensional vector space, see \cite{DKZ}.
An analogue of Weyl's theorem in positive characteristic  was proved for {\em separating} invariants in \cite{DKW}.

\subsection{Matrix invariants and semi-invariants}
Let $\Mat_{p,q}$ denote the set of $p \times q$ matrices. Consider the group $G = \GL_n$ acting on $V = \Mat_{n,n}^m$ by simultaneous conjugation, i.e.,
$$
g \cdot(X_1,\dots,X_m) = (gX_1g^{-1},\dots,gX_mg^{-1}).
$$

We set $S(n,m) = K[V]^G$, the ring of invariants for this action. The ring $S(n,m)$ is often referred to as the ring of matrix invariants.Procesi showed that traces of monomials (in the $X_i$'s) generate $S(n,m)$ in characteristic $0$, see \cite{Procesi}. In \cite{Donkin,Donkin-2}, Donkin extended this result to all characteristics, by replacing traces with the coefficients of the characteristic polynomial instead. 

A bound on the degree of generators in characteristic $0$ followed from the work of Razmyslov, see \cite{Razmyslov}.

\begin{theorem} [Procesi--Razmyslov] \label{Pro-Raz}
Assume $\kar (K) = 0$. Then we have $\beta(S(n,m)) \leq n^2$.
\end{theorem}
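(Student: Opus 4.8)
The plan is to combine Procesi's structure theorem for matrix invariants with the fully polarized Cayley--Hamilton identity in order to lower the degree of every trace monomial below $n^2$. By Procesi's first fundamental theorem for $S(n,m)$ (\cite{Procesi}), in characteristic zero the degree-$r$ part of $S(n,m)$ is spanned by the elements $\Tr_\sigma(X_{j_1},\dots,X_{j_r})$, where $X_1,\dots,X_m$ are the generic $n\times n$ matrices, $\sigma\in S_r$, $(j_1,\dots,j_r)\in\{1,\dots,m\}^r$, and $\Tr_\sigma$ denotes the product over the cycles of $\sigma$ of the traces of the corresponding monomials. If $\sigma$ has at least two cycles, then $\Tr_\sigma$ is a product of traces of monomials of strictly smaller degree, hence already lies in the subalgebra generated by lower-degree invariants; so the only potential indecomposable generators in degree $r$ are the single traces $\Tr(X_{j_1}\cdots X_{j_r})$. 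It therefore suffices to prove: \emph{for every word $w$ in the generic matrices of length $N\ge n^2+1$, the invariant $\Tr(w)$ lies in the subalgebra of $S(n,m)$ generated by traces of words of length $<N$}. Induction on $N$ then gives $\beta(S(n,m))\le n^2$. (One may also first apply Theorem~\ref{theo:weyl}, with $V=\Mat_{n,n}$ and $\dim V=n^2$, to reduce to $m=n^2$, although this is not needed below.)

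The engine is the \emph{fundamental trace identity} of $\Mat_n$, obtained by fully polarizing Cayley--Hamilton $A^n=\sum_{k=1}^{n}(-1)^{k+1}e_k(A)\,A^{n-k}$. In characteristic zero Newton's identities express each $e_k(A)$ as a polynomial, with denominators dividing $k!$, in $\Tr(A),\dots,\Tr(A^k)$, so polarization produces a multilinear trace identity $\mathrm{fti}(Y_1,\dots,Y_n;Y_0)=0$ of degree $n+1$, valid for all $n\times n$ matrices $Y_0,\dots,Y_n$ over any commutative ring. Given $w$ of length $N=n^2+1$, I would write $w=v_0v_1\cdots v_n$ as a product of $n+1$ nonempty blocks each of length $\le n$ --- possible exactly because $N\le n(n+1)$ --- and apply $\mathrm{fti}$ to the matrices $Y_i=v_i$. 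From the leading term $\Tr(A^nY_0)$ this singles out $\Tr(v_1\cdots v_nv_0)=\Tr(w)$ (with a nonzero integer coefficient, hence invertible); every other term is either a trace of a \emph{reordering} $\Tr(v_{\pi(1)}\cdots v_{\pi(n)}v_0)$ of the blocks, again of total length $N$, or a product of traces coming from the $e_k$-terms with $k\ge1$. A short count shows that every trace-factor appearing in an $e_k$-term with $k\ge1$ is a trace of a word of length $\le n^2<N$, so those terms already lie in the target subalgebra; all the difficulty is thus in the reordering terms.

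The main obstacle is precisely that one application of the identity merely trades $\Tr(w)$ for traces of other length-$N$ monomials: one must show that the span of $\{\Tr(v_{\rho(0)}\cdots v_{\rho(n)}):\rho\in S_{n+1}\}$ vanishes modulo the subalgebra generated by shorter traces. Following Razmyslov, I would generate enough relations by running $\mathrm{fti}$ over all choices of the distinguished block $Y_0$, over all cyclic rotations (using that the trace is a class function), and over enough distinct groupings of $w$ into $n+1$ blocks, and then argue --- using that every trace identity of $\Mat_n$ is a formal consequence of $\mathrm{fti}$ (Procesi, Razmyslov) --- that these relations force the span above to vanish modulo the lower-degree subalgebra. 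A more module-theoretic alternative is to prove first that the trace ring $T(n,m)$ (the subalgebra of $\Mat_n(K[\Mat_{n,n}^m])$ generated by $X_1,\dots,X_m$ together with $S(n,m)$) is spanned over $S(n,m)$ by monomials in the $X_i$ of degree $\le n^2-1$ --- by iterated Cayley--Hamilton, the point being that $\Mat_n$ is $n^2$-dimensional, so $n^2+1$ partial products of any long word are linearly dependent --- and then apply $\Tr$. In that route the delicate step is to turn such a generic linear dependence into one with \emph{invariant} coefficients of \emph{controlled degree}, so that the induction on degree genuinely closes; I expect this degree-bookkeeping to be the crux of the proof.
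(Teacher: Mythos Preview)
The paper does not prove this theorem: it is stated as a known result attributed to Procesi and Razmyslov, with a pointer to Formanek's exposition \cite{Formanek} for the argument (and the remark that the proof there goes through once the characteristic exceeds $n^2+1$). There is thus no proof in the paper to compare your proposal against.

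As for your sketch itself: the overall plan---reduce to single traces via Procesi's first fundamental theorem, then use the multilinearized Cayley--Hamilton identity to shorten long words---is indeed the classical route. But you have correctly located, and not resolved, the genuine obstacle. In your first approach, one application of the fundamental trace identity to $n+1$ blocks of a word of length $N=n^2+1$ rewrites $\Tr(w)$, modulo products of shorter traces, as a combination of traces of \emph{reorderings} of the same blocks, all still of length $N$; invoking ``every trace identity of $\Mat_n$ follows from the fundamental one'' does not by itself show that the system of relations you can generate this way has full rank on the length-$N$ piece modulo lower degrees. In your second approach, you need a linear dependence among the $n^2+2$ partial products with coefficients in $S(n,m)$ of degree strictly smaller than $N$, whereas the naive dimension count in $\Mat_n(K)$ only gives scalar coefficients depending on the specialization, not invariant ones. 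Closing either gap is exactly Razmyslov's contribution and requires a real argument (for instance via the trace bilinear form and an alternating construction, as in \cite{Razmyslov}, or the treatment in \cite{Formanek}); your proposal stops just short of that step.
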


One can observe that this bound is independent of $m$, as predicted by Weyl's theorem. It was pointed out to us by Domokos that the proof of the above result in \cite{Formanek} goes through once characteristic is larger than $n^2 + 1$. In particular, this means that the statement of Weyl's theorem holds for matrix invariants if we assume a modest lower bound on characteristic! However, the techniques used for this are very specific to matrix invariants, and it is not clear if they can be generalized. For example, even in the closely related example of matrix semi-invariants discussed below, such a result was not known prior to this paper. 

Consider the left-right action of $G = \SL_n \times \SL_n$ on $V = \Mat_{n,n}^m$, i.e., for $(A,B) \in \SL_n \times \SL_n$ and $(X_1,\dots,X_m) \in \Mat_{n,n}^m$, we have 
$$
(A,B) \cdot (X_1,\dots,X_m) = (AX_1B^{-1},\dots,AX_mB^{-1}).
$$

We set $R(n,m) = K[V]^G$, the invariant ring in this case. The ring $R(n,m)$ is often referred to as the ring of matrix semi-invariants. In recent times, connections to computational complexity has generated a lot of interest in matrix semi-invariants, see \cite{DM,GGOW,IQS,Mul}.

A determinantal description for the generators follows from results on semi-invariants of quivers, see \cite[Corollary 3]{DW},  \cite{DZ} and \cite{SVd}. A polynomial bound on the degree of generators was given in \cite{DM,DM-arbchar}. 

\begin{theorem} [\cite{DM,DM-arbchar}]  \label{msi-bounds}
Let $n \geq 2$. We have $\beta(R(n,m)) \leq mn^3(n-1)$. If $\kar (K) = 0$, then we have $\beta(R(n,m)) \leq n^6$.
\end{theorem}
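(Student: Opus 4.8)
The plan is to derive both estimates from the First Fundamental Theorem for matrix semi-invariants together with the combinatorics of non-commutative rank, and then to upgrade the arbitrary-characteristic bound to the sharp characteristic-zero bound by invoking Weyl's theorem itself.

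First I would use the determinantal description of the generators, valid in all characteristics (see \cite[Corollary 3]{DW}, \cite{DZ}, \cite{SVd}): the ring $R(n,m)$ is generated by the invariants
$$
c_T(X_1,\dots,X_m) := \det\bigl(T_1 \otimes X_1 + \cdots + T_m \otimes X_m\bigr),
$$
where $d$ ranges over the positive integers and $T = (T_1,\dots,T_m)$ with each $T_i \in \Mat_{d,d}$; here $c_T$ is homogeneous of degree $nd$ in the entries of the $X_i$. It therefore suffices to show that whenever $d > mn^2(n-1)$ the invariant $c_T$ with $d \times d$ coefficient matrices already lies in the subalgebra generated by the $c_{T'}$ of strictly smaller parameter, since this gives $\beta(R(n,m)) \le n\cdot mn^2(n-1) = mn^3(n-1)$.

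The crux is the next step. Treat the $X_i$ as matrices of independent variables, write $\mathcal{T} := \langle T_1,\dots,T_m\rangle \subseteq \Mat_{d,d}$ for the linear span, and set $L = T_1 \otimes X_1 + \cdots + T_m \otimes X_m \in \Mat_{dn,dn}$; as the $X_i$ vary over $\Mat_{n,n}$ the matrix $L$ sweeps out the Kronecker product $\mathcal{T} \otimes \Mat_{n,n}$, so that the vanishing of $c_T$ and, more delicately, the way $\det L$ degenerates — hence whether $c_T$ is already expressible through invariants of smaller parameter — are governed by the non-commutative rank of $\mathcal{T}$ and of its Kronecker blow-ups, via the theory of shrunk subspaces. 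The effective bounds on how these shrunk subspaces stabilize under blow-up (see e.g.\ \cite{IQS}, \cite{GGOW}) force the relevant transition to occur once $d$ exceeds a threshold polynomial in $n$ and $m$; a careful accounting pins the threshold down as $mn^2(n-1)$, past which a Schur-complement (Cauchy--Binet type) expansion of $\det L$ along the stabilized shrunk subspace rewrites $c_T$ as a polynomial in the $c_{T'}$ with strictly smaller $d$. Carrying out this stabilization analysis and producing the explicit rewriting identity is the quantitative heart of the argument, and I expect it to be the main obstacle; it is the content of \cite{DM-arbchar}.

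Finally, for the characteristic-zero bound I would feed the estimate just proved into Weyl's theorem. Since $\dim_K \Mat_{n,n} = n^2$, Theorem~\ref{theo:weyl} gives $\beta(R(n,m)) \le \beta(R(n,n^2))$ for every $m$, and the arbitrary-characteristic bound applied with $m$ replaced by $n^2$ then yields
$$
\beta(R(n,m)) \le \beta(R(n,n^2)) \le n^2\cdot n^3(n-1) = n^5(n-1) \le n^6 .
$$
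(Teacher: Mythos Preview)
This theorem is not proved in the paper; it is quoted from \cite{DM,DM-arbchar}, and the only argument the paper supplies is the remark that the characteristic-zero bound follows from Weyl's theorem. Your derivation of the $n^6$ bound --- apply Theorem~\ref{theo:weyl} with $\dim\Mat_{n,n}=n^2$ and plug $m=n^2$ into the arbitrary-characteristic bound --- is exactly the argument the paper indicates, and is correct.

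Your sketch for the arbitrary-characteristic bound, however, does not match the actual route taken in \cite{DM,DM-arbchar}, and the mechanism you propose has a gap. You frame the proof as producing, for each $d>mn^2(n-1)$, an explicit rewriting of $c_T$ as a polynomial in smaller $c_{T'}$ via a Schur-complement expansion along a stabilized shrunk subspace; you also say the blow-up stabilization threshold is polynomial in $n$ and $m$. That is not what happens. The shrunk-subspace/non-commutative rank input is used to bound the \emph{null cone}: the key fact is that blow-up stabilizes at $d=n-1$ (independent of $m$), so $\gamma_K(\SL_n\times\SL_n,\Mat_{n,n}^m)\le n(n-1)$ in every characteristic. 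The $m$-dependence enters only afterwards, through commutative algebra: one passes from $\gamma$ to $\beta$ using a homogeneous system of parameters and the Cohen--Macaulay property (in positive characteristic this needs good filtrations and Hashimoto's theorem, exactly the machinery developed in Section~\ref{Sec:final proofs} of the present paper). No explicit ``rewriting identity'' for individual $c_T$ is ever produced, and I do not see how your Schur-complement step would yield one --- a large $d$ does not by itself force $\mathcal{T}$ to have a useful shrunk subspace, so there is nothing to expand along. So your first paragraph identifies the right generators, and your last paragraph is fine, but the middle paragraph should be replaced by: bound the null cone via the regularity lemma, then invoke the $\gamma\rightsquigarrow\beta$ bound with $\dim V=mn^2$.
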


The bound stated in \cite{DM,DM-arbchar} for $\beta(R(n,m))$ was $mn^4$, but these slightly stronger bounds are evident in the proof of \cite[Theorem~1.2]{DM}. The bound in characteristic $0$ is once again a consequence of Theorem~\ref{theo:weyl}. We prove that the statement of Weyl's theorem holds for matrix semi-invariants with only a modest lower bound on the characteristic. 

\begin{theorem} \label{msi.main}
Suppose $\kar (K) = p > 2n^6 + n^2$. Then the statement of Weyl's polarization theorem holds for the left-right action of $\SL_n \times \SL_n$ on $\Mat_{n,n}$.
In particular, for all $m \in \Z_{> 0}$ we have 
$$
\beta(R(n,m)) \leq \beta(R(n,n^2)) \leq n^6.
$$
\end{theorem}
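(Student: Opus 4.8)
The plan is to derive Theorem~\ref{msi.main} from our general polarization theorem for good modules over connected reductive affine group schemes over $\Z$, by verifying its hypotheses for $G=\SL_n\times\SL_n$ acting on $V=\Mat_{n,n}$ and tracking the numerical constants (we may assume $n\ge 2$, the case $n=1$ being immediate, since then $R(1,m)=K[x_1,\dots,x_m]$). First I would isolate the purely formal reductions. Because $\beta(K[V^a]^G)\le\beta(K[V^b]^G)$ whenever $a\le b$, the inequality $\beta(R(n,m))\le\beta(R(n,n^2))$ is automatic for $m\le n^2=\dim V$, so only $m\ge n^2$ needs Weyl's theorem. For such $m$, the strong form of Weyl's polarization theorem, with $\dim V=n^2$ playing the role of $n$, asserts that a generating set $S$ of $R(n,n^2)$ in degrees $\le\beta(R(n,n^2))$ polarizes to a generating set $\langle S\rangle_{\GL_m}$ of $R(n,m)$; since $\GL_m$ acts on $K[V^m]$ by graded algebra automorphisms, $\langle S\rangle_{\GL_m}$ again sits in degrees $\le\beta(R(n,n^2))$, whence $\beta(R(n,m))\le\beta(R(n,n^2))$. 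Finally Theorem~\ref{msi-bounds} gives $\beta(R(n,n^2))\le n^2\cdot n^3(n-1)=n^5(n-1)\le n^6$ in every characteristic. Thus the whole statement follows once we know the strong form of Weyl's theorem for the left--right action under the hypothesis $\kar K>2n^6+n^2$.

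Next I would check the two structural hypotheses of the general theorem. The group $\SL_n\times\SL_n$ is a connected reductive affine group scheme over $\Z$. As a module for it, $V=\Mat_{n,n}$, read as the space of linear maps $K^n\to K^n$, is the outer tensor product of the standard representation $K^n$ of the first $\SL_n$ (acting on the target) with the dual of the standard representation of the second $\SL_n$ (acting on the source via $B^{-1}$); hence $V^*\cong (K^n)^*\boxtimes K^n$. Over $\Z$, and hence in every characteristic, both $K^n$ and $(K^n)^*$ are minuscule, therefore tilting, and in particular have good filtrations; and an outer tensor product of a module with a good filtration over the first factor and one over the second has a good filtration over the product. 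So $V^*$ is a good $G$-module and the general theorem applies.

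Plugging $\dim V=n^2$ and the characteristic-zero degree bound $n^6$ for $K[V^{\dim V}]^G=R(n,n^2)$ (Theorem~\ref{msi-bounds}) into the threshold produced by the general theorem --- which for this data becomes $2\cdot n^6+n^2$ --- we conclude that whenever $\kar K>2n^6+n^2$ the strong form of Weyl's polarization theorem holds for the left--right action. Combining this with the reductions of the first paragraph yields $\beta(R(n,m))\le\beta(R(n,n^2))\le n^6$ for all $m\in\Z_{>0}$, as claimed.

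The step I expect to be the genuine obstacle is the one the general theorem has to carry out: the characteristic-$p$ polarization argument underlying Weyl's strong form. One writes $K[V^m]=\Sym\bigl(V^*\otimes(K^m)^*\bigr)$, so that, $G$-equivariantly, $K[V^m]$ is good (a symmetric algebra on copies of the good module $V^*$), and one refines this with the characteristic-free Cauchy filtration, whose $G\times\GL_m$-subquotients are $\nabla_\lambda(V^*)\otimes\nabla_\lambda\bigl((K^m)^*\bigr)$. Since $V^*$ is good these subquotients have good filtrations over $G$, so by Kempf vanishing $(-)^G$ is exact here and $R(n,m)$ acquires a $\GL_m$-module filtration with layers $\nabla_\lambda(V^*)^G\otimes\nabla_\lambda\bigl((K^m)^*\bigr)$, nonzero only when $\lambda$ has at most $n^2$ rows. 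In characteristic zero this filtration splits and each $\nabla_\lambda\bigl((K^m)^*\bigr)$ is the $\GL_m$-span of its $\GL_{n^2}$-part, which is precisely Weyl's theorem; in characteristic $p$ both assertions can fail --- already $\Sym^p$ of a vector space is not spanned by $p$-th powers, because of the Frobenius --- and repairing them requires (i) forcing every partition $\lambda$ relevant to generators and relations of $R(n,n^2)$ to have all parts smaller than $p$, and (ii) an induction on degree to push the non-split filtration through, the two together accounting for a threshold of the shape $2\beta_0+\dim V$ rather than $\beta_0+\dim V$. Making (i) and (ii) work uniformly in $m$ is the technical heart of the matter; granted it, Theorem~\ref{msi.main} is just the bookkeeping above.
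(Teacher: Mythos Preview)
Your overall shape is right—apply the paper's technical polarization result after checking $V^*$ is a good $G$-module—but you are plugging the wrong quantity into the wrong result, and the coincidence that the final number comes out to $2n^6+n^2$ hides this.

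The paper does \emph{not} deduce Theorem~\ref{msi.main} from Theorem~\ref{main}. Theorem~\ref{main} has $Q=\max\{2,\tfrac{3}{8}\dim V\cdot(\delta_\Z(\Z[V^{\dim V}]^G))^2\}$; for matrix semi-invariants no explicit bound on $\delta_\Z$ is given, so this route cannot produce the numerical threshold $2n^6+n^2$. What the paper actually uses is Proposition~\ref{Weyl.technical}, whose hypothesis is a \emph{linear-in-$m$} bound $\beta(K[V^m]^G)\le mQ$ valid for \emph{all} $m$, and whose threshold is $p>2Q(\dim V+1)+\dim V$. The relevant input is therefore the all-characteristics estimate $\beta(R(n,m))\le m\,n^3(n-1)$ from Theorem~\ref{msi-bounds}, giving $Q=n^3(n-1)$ and threshold $2n^3(n-1)(n^2+1)+n^2\le 2n^6+n^2$. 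Your proposal uses only the single value $\beta_0=\beta(R(n,n^2))$ and asserts a threshold of the form $2\beta_0+\dim V$; no such statement is proved (or provable by the paper's method), because the inductive step from $V^m$ to $V^{m+1}$ needs control of $\beta(K[V^{m+1}]^G)$, which the linear bound supplies and a single $\beta_0$ does not.

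Your sketch of the mechanism in the last paragraph is also off. The paper does not force the partitions $\lambda$ themselves to have all parts $<p$; rather, for each $\lambda$ with $l(\lambda)\le\dim V$ it writes $\lambda=\mu_1+\cdots+\mu_s$ as a horizontal concatenation with each $|\mu_i|<p$, so that the Schur modules $S_{\mu_i}(W)$ are irreducible (Corollary~\ref{cat:ss}) and polarization holds for each piece; then Proposition~\ref{prop:tensor:pol} and Theorem~\ref{theo:pol:main} reassemble polarization for $S_\lambda$. The count of how many pieces are needed versus how large $m$ is, together with the linear bound $\beta\le mQ$, is what produces the threshold $2Q(\dim V+1)+\dim V$, not $2\beta_0+\dim V$. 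Once you replace your $\beta_0$ input by $Q=n^3(n-1)$ and invoke Proposition~\ref{Weyl.technical} rather than Theorem~\ref{main}, your reductions in the first paragraph and your verification that $V^*$ is good finish the job exactly as in the paper.
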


Our techniques give similar results for matrix invariants as well, but the lower bound on characteristic we obtain is weaker than the already known $n^2 + 1$.

\begin{remark}
In small characteristic (i.e., $p \leq n$), the statement of Weyl's theorem is false for matrix invariants, see \cite{DKZ,Domokos}. By a standard reduction, the same phenomenon holds for matrix semi-invariants as well. However, it remains an open problem to understand whether the statement of Weyl's theorem holds for matrix invariants for $n < p \leq n^2 + 1$ and matrix semi-invariants for $n < p \leq 2n^6 + n^2$.
\end{remark}

We can further decrease the lower bound on characteristic if all we want is a bound that doesn't depend on $m$. However, the degree bound will become a bit worse. For example, the techniques in this paper can be used to show the following:

\begin{proposition} \label{unnecessary bound}
Suppose $\kar(K) = p > n^6$, then for all $m \in \Z_{>0}$;
$$
\beta(R(n,m)) \leq \beta(R(n,n^3)) \leq n^7.
$$ 
\end{proposition}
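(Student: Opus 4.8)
By Theorem~\ref{msi-bounds} applied with $m = n^3$ one gets, with no hypothesis on the characteristic, $\beta(R(n,n^3)) \leq n^3\cdot n^3(n-1) = n^6(n-1) \leq n^7$, which is the second inequality. For the first inequality the cases $m \leq n^3$ are covered by the monotonicity of $m \mapsto \beta(R(n,m))$ noted in the introduction, so the real content is the statement of Weyl's polarization theorem \emph{starting from $n^3$ copies}: if $\kar(K) = p > n^6$, then for every $m > n^3$ the $\GL_m$-submodule $\langle R(n,n^3)\rangle_{\GL_m} \subseteq R(n,m)$ generates $R(n,m)$ as a $K$-algebra.

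The plan is to run the general argument of this paper for the left-right action of $G = \SL_n \times \SL_n$ on $V = \Mat_{n,n}$, while keeping track of which primes get inverted. Here $\dim V = n^2 \leq n^3$, and, as a $G$-module, $V^* \cong (K^n)^\vee \boxtimes K^n$; both tensor factors are minuscule, hence simultaneously Weyl and dual Weyl modules, so $V^*$ is a good $G$-module and the hypotheses of our main theorem hold. In particular the formation of invariants commutes with base change, $\Z[V^m]^G \otimes_\Z K = K[V^m]^G$. Over $\Q$ there are two inputs (write $R_\Q$ for the invariants over $\Q$): Weyl's polarization theorem in its strong form, which -- since $n^3 \geq \dim V$ -- presents $R_\Q(n,m)$ as the subalgebra generated by $\langle R_\Q(n,n^3)\rangle_{\GL_m}$; and the bound $\beta(R_\Q(n,m)) \leq n^6$ of Theorem~\ref{msi-bounds}, which lets one take the corresponding generators to be polarizations of generators of $R_\Q(n,n^3)$ of degree $\leq n^6$.

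Next I would descend this to characteristic $p$. Over $\Z_{(p)}$ with $p > n^6$, every prime occurring in a denominator below is a unit, since the only divisions in the char-$0$ argument come from restitution: restituting a full polarization of a form of degree $e$ recovers that form up to a nonzero integer all of whose prime factors are $\leq e$, and here every restituted form has degree $\leq n^6$. Hence the chain of identities exhibiting $R_\Q(n,m)$ as generated by those polarized degree-$\leq n^6$ invariants remains valid over $\Z_{(p)}$, and -- together with the base-change isomorphism, which by goodness of $V^*$ is defined away from the primes $\leq n^6$ -- reducing modulo $p$ shows that the reductions of those invariants, which lie in $\langle R(n,n^3)\rangle_{\GL_m}$, generate $R(n,m)$. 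A concrete handle on the generators is available: $R(n,m)$ is generated by the determinants $\det(\sum_i A_i \otimes X_i)$ with $A_i \in \Mat_{d,d}$ (and, if needed, the coefficients of the characteristic polynomial of $\sum_i A_i\otimes X_i$), and for such an element the $\GL_m$-action -- extended to arbitrary linear substitutions of the $m$ copies -- lets one assume $A_1,\dots,A_m$ are supported on a basis of their span; if that span has dimension $\leq n^3$ the element already lies in $\langle R(n,n^3)\rangle_{\GL_m}$, and otherwise one lifts it to $\Z$ and feeds it into the argument above.

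The reason I expect the larger base $n^3$ -- at the price of the weaker bound $n^7$ -- to need only $p > n^6$, rather than the $p > 2n^6 + n^2$ of Theorem~\ref{msi.main}, is that the extra copies leave enough room to carry out the char-$0$ reduction with every restituted form of degree $\leq n^6$, whereas pushing the base down to $\dim V = n^2$ seems to force restitutions of degree up to roughly $2n^6$; moreover one never has to bound $\beta(R(n,n^2))$ in characteristic $p$ (the origin of the additive $n^2$), since it is replaced by the characteristic-free estimate above. The main obstacle, as usual in this kind of argument, is the integrality bookkeeping of the third paragraph: verifying that no prime exceeding $n^6$ is inverted anywhere in the (possibly recursive) reduction of a char-$0$ invariant to polarizations of degree-$\leq n^6$ generators, and that the good-filtration and base-change ingredients are themselves defined over $\Z_{(p)}$. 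Granting that, the proposition follows.
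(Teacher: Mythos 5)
The second inequality $\beta(R(n,n^3)) \leq n^3\cdot n^3(n-1) \leq n^7$ is correct and is exactly what the paper uses, via Theorem~\ref{msi-bounds}, with no hypothesis on the characteristic. For the first inequality your route is genuinely different from the paper's: you propose to lift to $\Z$, run the strong form of Weyl's theorem over $\Q$, and reduce modulo $p$ after verifying that no prime larger than $n^6$ is ever inverted. The paper never leaves characteristic $p$. Its proof re-runs Proposition~\ref{Weyl.technical}, whose engine is Corollary~\ref{cor:bdd.pol} -- a purely characteristic-$p$ statement built from the horizontal-concatenation decomposition $\lambda = \mu_1 + \cdots + \mu_s$ together with the irreducibility of the Schur modules $S_{\mu_i}(K^m)$ when $|\mu_i| < p$ (Corollary~\ref{cat:ss}). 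The only adjustment needed for Proposition~\ref{unnecessary bound} is combinatorial: with $\dim \Mat_{n,n} = n^2$ replacing $n$ and the base raised from $n^2$ copies to $n^3$ copies, the two cases in Corollary~\ref{mimic} are reworked (either $\lceil T\rceil \leq n$ or $\lceil T\rceil \leq (1 + \tfrac{1}{n})T$, with $T = d/(n^2(k-1))$) so that one may take $k = n^4$, yielding the threshold $p > kn^2 = n^6$. So in the paper $n^6$ arises as a Schur-module irreducibility threshold, not as a cap on the degrees of restituted forms.

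Your proposal has a gap which you yourself flag, and it is not cosmetic. The claim that "the only divisions in the char-$0$ argument come from restitution, with all prime factors $\leq n^6$" is asserted, not proved. A single polarize/restitute cycle of a degree-$e$ form does cost only $d_1!\cdots d_m!$ with $\sum d_i = e$, but Weyl's theorem is not one such cycle: the Cauchy-formula proof requires splitting the natural filtration of $\Sym^d(V^*\otimes K^m)$ into the pieces $S_\lambda(V^*)\otimes S_\lambda(K^m)$, and the torsion obstructing that splitting over $\Z$ is governed by the representation theory of $\GL$ in small characteristic, not by a factorial of the total degree; the classical Capelli-style reduction likewise needs a recursive argument whose integral form is delicate. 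Knop's version of this lift works for finite groups, but for connected reductive groups the paper's good-filtration machinery exists precisely because the torsion and base-change bookkeeping you defer to the end is not routine. The determinantal handle you sketch does not close it either: the span of $A_1,\dots,A_m$ can have dimension up to $\min(m,d^2)$, typically much larger than $n^3$. Until that third paragraph is made precise, this is a plan rather than a proof.
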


However, with these techniques, one cannot decrease the lower bound on characteristic to $O(n^{6 - \epsilon})$ for any $\epsilon > 0$.

\subsection{Main results}
We need some technical definitions for which we follow \cite{Seshadri}. An affine group scheme $G$ over $\Spec\  \Z$ (or simply $\Z$) is said to be reductive if $G \rightarrow \Spec\ \Z$ is smooth, and the geometric fibers are connected reductive algebraic groups (in the usual sense). Let $G$ be a reductive group scheme over $\Z$, and let $V$ be a free $\Z$-module of finite rank $n$ with a linear action of $G$. We will call $V$ a free $(G-\Z)$-module of rank $n$. We will denote the ring of invariants by $\Z[V]^G = \Sym(V^*)^G$. 

For any algebraically closed field $K$, the $K$-points $G_K$ form a connected reductive group over $K$, and the $K$-points of $V$, i.e., $V_K = V \otimes_\Z K$ is an $n$-dimensional representation of $G_K$. We will write $K[V] = K[V_K]$ and $K[V]^G = K[V_K]^{G_K}$ for simplicity. Note that $K[V]^G$ is not necessarily the same as the base change $\Z[V]^G \otimes_\Z K$.

\begin{definition} \label{delt}
Let $S = \bigoplus_{i \geq 0} S_i$ be a graded $R$-algebra. Then let $S_{\{d\}}$ denote the $R$-subalgebra generated by $\cup_{i \leq d} S_i$. Further, let $\delta_R(S)$ denote the smallest $d$ such that $S$ is a finite extension over $S_{\{d\}}$.
\end{definition}

The following theorem requires the notion of a good modules, which we recall in Section~\ref{Sec:good1}. A reductive group scheme over $\Z$ is called split if there is a (fiberwise) maximal torus defined over $\Z$.

\begin{theorem} \label{main}
Let $G$ be a split reductive group scheme over $\Z$, and let $V$ be a free $(G-\Z)$ module of rank $n$. Suppose $V^*$ is a good $G$-module. Then, the following statements hold.

\begin{enumerate}
\item The number $Q = \max\{2,\frac{3}{8}n (\delta_\Z(\Z[V^n]^G))^2\}$ is finite.
\item Suppose $K$ is an algebraically closed field such that $\kar(K) > 2 Q(n+1) + n$. Then the statement of Weyl's polarization theorem holds for the action of $G_K$ on $V_K$, i.e.,

\begin{enumerate}
\item  if $S$ is a set of generators for $K[V^n]^G$, then $\left<S\right>_{\GL_m}$ is a set of generators for $K[V^m]^G$ for all $m \geq n$;
\item we have $\beta(K[V^m]^G) \leq \beta(K[V^n]^G)$ for all $m \geq 1.$
\end{enumerate}
\end{enumerate}
\end{theorem}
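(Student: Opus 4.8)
The plan is to reduce the problem of generating $K[V^m]^G$ from $\langle S\rangle_{\GL_m}$ to a problem over $\Z$ via a reduction modulo $p$ argument, and to control the failure of base change by bounding the denominators that appear when one expresses mod-$p$ invariants in terms of characteristic-zero polarizations. First I would recall from Section~\ref{Sec:pol} the formalism of polarization and restitution for the $\GL_m$-action on $K[V^m]^G = K[V\otimes K^m]^G$, and record the classical fact that in characteristic zero every element of $K[V^m]^G$ of degree $d$ is obtained by restitution from a multihomogeneous element of $K[V^d]^G$; combined with the reductive group scheme hypothesis and the fact that $V^*$ is a good $G$-module, this should let me work with $\Z[V^m]^G$ and its good filtration, so that the relevant invariants have a $\Z$-structure with controlled torsion. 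The number $Q$ is designed precisely to be an explicit bound (coming from the general degree bound of \cite{Derksen1} applied to $\Z[V^n]^G$ via $\delta_\Z$) on the degrees one must go up to, and on the size of the combinatorial/determinantal coefficients one encounters.

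The core of the argument, as I envision it, is an explicit ``polarization with bounded denominators'' lemma: if $f \in \Z[V^m]^G$ is multihomogeneous of total degree $d$, then $f$ lies in the $\Z[1/N]$-subalgebra generated by $\langle S\rangle_{\GL_m}$, where $N$ is a product of primes all at most some explicit function of $n$ and $d$ — here is where the bound $2Q(n+1)+n$ materializes. The mechanism: polarization operators are differential operators with integer coefficients, but restitution (passing back from multilinear to multihomogeneous) requires dividing by multinomial coefficients, and full restitution of a degree-$d$ invariant requires dividing by $d!$; the whole point of the good module hypothesis is that it forces the invariants of $V^n$ to already generate ``enough,'' via Weyl's theorem over $\Q$, so that one only ever needs to restitute in bounded degree, hence only divides by factorials and multinomials that are products of small primes. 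One then needs the companion fact that $K[V^m]^G$ itself has a good $\Z$-form in large characteristic — this is where goodness of $V^*$ (and hence, by standard results on good filtrations, of $\Sym(V^*)$ and of its invariant subring in sufficiently large characteristic) is used to guarantee $\Z[V^m]^G \otimes_\Z K = K[V^m]^G$ once $p > $ the relevant bound, so that a statement proved integrally descends to characteristic $p$.

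Concretely the key steps, in order, would be: (i) establish finiteness of $Q$ by invoking the general degree bound for $\Z[V^n]^G$ and bookkeeping with $\delta_\Z$; (ii) prove that for $p$ large (in terms of $Q$ and $n$) the invariant ring $K[V^m]^G$ has good filtration and agrees with $\Z[V^m]^G\otimes K$, reducing (2a) to a statement about $\Z$-forms; (iii) prove the bounded-denominator polarization lemma over $\Q$, showing $\langle S\rangle_{\GL_m}$ generates $\Q[V^m]^G$ using only restitutions in degree $\le$ (something like) $Q(n+1)$, so that the denominators are $(Q(n+1))$-smooth; (iv) clear denominators and reduce mod $p$, using $p > 2Q(n+1)+n$ to ensure no denominator vanishes, to conclude $\langle S\rangle_{\GL_m}$ generates $K[V^m]^G$; (v) deduce (2b) as the immediate weak-form corollary. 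I expect step (iii) — pinning down exactly which restitution operations are needed and getting an honest bound on the primes dividing their denominators, rather than a bound involving $m$ — to be the main obstacle, since naively one divides by $m!$ or by $\GL_m$-Vandermonde-type determinants whose size grows with $m$; the resolution must exploit that $\langle S\rangle_{\GL_m}$ is $\GL_m$-stable, so that one can always first project to a sub-$\GL_n$ or use that polarizations of a fixed finite set $S$ span, keeping all the actual divisions confined to degree $\le n$ (or $\le Q(n+1)$) regardless of $m$.
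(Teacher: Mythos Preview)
Your overall strategy is genuinely different from the paper's, and the step you flag as the main obstacle is indeed a real gap that your proposed resolution does not close.

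The paper never argues via bounded denominators or reduction mod $p$. Instead, it works directly in characteristic $p$ and proves that polarization succeeds degree by degree, using the Doubilet--Rota--Stein filtration of $\Sym^d(V^*\otimes W)^G$ by Schur functors $S_\lambda(W)$ with $l(\lambda)\le n$. The key idea (Theorem~\ref{theo:pol:main} and the surrounding lemmas) is combinatorial: one decomposes $\lambda = \mu_1 + \cdots + \mu_s$ into column-blocks with $|\mu_i|\le kn < p$, so that each $S_{\mu_i}(W)$ is an \emph{irreducible} $\GL(W)$-module (by semisimplicity of $\Rep^{\rm pol}(\GL_m)_d$ for $d<p$), hence generated under $\GL(W)$ from any nonzero subspace. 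A tensor-product trick (Proposition~\ref{prop:tensor:pol}) then bootstraps this to $S_\lambda(W)$. The bound $\beta(K[V^m]^G)\le mQ$ is used only to ensure that the number of blocks $s$ stays at most $m/n$, so that the decomposition fits inside $V^m$. No division ever occurs.

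By contrast, your step (iii) asks for restitution with denominators bounded independently of $m$ and of the degree $d$. But $d$ can be as large as $mQ$, and naive restitution of a degree-$d$ invariant introduces $d!$ (or comparable multinomials). Your suggested fix --- ``project to a sub-$\GL_n$'' or ``keep divisions confined to degree $\le n$'' --- does not work: projecting an invariant of $V^m$ to $V^n$ loses information precisely when $m>n$, and $\GL_m$-stability of $\langle S\rangle_{\GL_m}$ does not by itself produce any new relations that let you avoid large factorials. There is no evident mechanism in the polarization/restitution formalism alone that bounds the primes in the denominators by a function of $n$ and $Q$ only; the paper's Schur-functor decomposition is exactly the extra structure that makes this possible, and it replaces the analytic ``divide by $d!$'' step with a representation-theoretic ``$S_{\mu_i}(W)$ is irreducible'' step.

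A secondary issue: your step (ii), asserting $\Z[V^m]^G\otimes_\Z K = K[V^m]^G$ for $p$ large, is plausible but not established in the paper (the paper only proves equality of Hilbert series, not base change), and would itself require a careful argument from the good-filtration hypothesis.
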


For the first part of the theorem, we will need some results of Seshadri from \cite{Seshadri}. The bulk of the paper will go towards proving the second part of the above theorem. The approach is a delicate interplay between combinatorics, representation theory and commutative algebra.

\subsection{Organization}
In Section~\ref{Sec:prelim}, we recall some necessary preliminaries. We give a short proof of Weyl's polarization theorem in characteristic $0$ in Section~\ref{Sec:Weyl0}.
 We study polarization in Section~\ref{Sec:pol}. Then, in Sections~\ref{Sec:good1} and \ref{Sec:good2}, we discuss good filtrations. In Section~\ref{Sec:technical}, we discuss the technical details needed, and prove Theorem~\ref{msi.main}. Finally, in Section~\ref{Sec:final proofs}, we bring together all the results to prove the main result, i.e., Theorem~\ref{main}.

\section{Preliminaries} \label{Sec:prelim}
\subsection{Partitions}
A partition $\lambda = (\lambda_1,\lambda_2,\dots)$ is a (weakly) decreasing sequence of non-negative numbers, such that only finitely many $\lambda_i$ are non zero. We often omit writing the trailing zeros. We say $\lambda$ is a partition of $n$ if $\sum_i \lambda_i = n$, and we write $\lambda \vdash n$. Associated to any partition is its Young diagram. For example, if $\lambda = (4,3,1,1)$, then its Young diagram is 

\begin{equation*} \label{egtab}
\yng(4,3,1,1). 
\end{equation*}

We will not distinguish between a partition and its Young diagram. For a partition $\lambda$, we define its size $|\lambda| := \sum_i \lambda_i = $ number of boxes in the Young diagram, and its length $l(\lambda)$ = length of the first column in its Young diagram. For the above example, we have $|\lambda| = 9$ and $l(\lambda) = 4$. We define $\lambda^{\dag}$ to be the conjugate of the partition $\lambda$.


\begin{definition} [Horizontal concatenation]
Given two partitions $\lambda$ and $\mu$, we define their horizontal concatenation $\lambda + \mu = (\lambda_1 + \mu_1, \lambda_2 + \mu_2, \dots)$. Note that $\lambda + \mu$ is a partition. 
\end{definition}

\begin{example}
We have $\yng(3,3,3,2)\  +  \ \yng(5,2) = \yng(8,5,3,2)$
\end{example}

\subsection{Schur functors}
For any commutative ring $R$, any $R$-module $E$ and any partition $\lambda$, one can construct a Schur module $S_{\lambda}(E)$ (denoted $E^{\lambda}$ in  \cite[Section~8.1]{Fulton}). Let $E^{\times \lambda}$ denote the direct product of $|\lambda|$ copies of $E$ labelled by boxes in the Young diagram of $\lambda$. The Schur module $S_{\lambda}(E)$ is defined as the universal target for $R$-module maps from $E^{\times \lambda}$ that are multilinear, alternating along columns, and satisfying some exchange relations. We do not recall the exchange relations, but refer instead to \cite[Section~8.1]{Fulton} for details. 

Let $K$ be an algebraically closed field. For any partition $\lambda$, the aforementioned construction gives a polynomial functor $S_{\lambda}: \Vect \rightarrow \Vect$, where $\Vect$ represents the category of finite dimensional vector spaces (over $K$). We call $S_{\lambda}$ the Schur functor associated to $\lambda$. We have $S_{(n)} = \Sym^n$, the $n^{th}$ symmetric power, whereas $S_{(n)^\dag} = S_{1^n} = \bigwedge^n$, the $n^{th}$ alternating power. Note that $S_{\lambda}$ is denoted by $L_{\lambda^\dag}$ in \cite{ABW,Wey}.

We require the following result that is well known to experts.

\begin{proposition} \label{schur:surj}
Let $\lambda$ and $\mu$ be two partitions. Then, there is a $GL(V)$-equivariant surjection $S_{\lambda}(V) \otimes S_{\mu}(V) \twoheadrightarrow S_{\lambda \attach \mu} (V)$ 
\end{proposition}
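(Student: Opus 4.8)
The plan is to construct the surjection $S_\lambda(V) \otimes S_\mu(V) \twoheadrightarrow S_{\lambda \attach \mu}(V)$ directly from the universal property of Schur modules, working first over an arbitrary commutative ring $R$ and a free module $E$, and then specializing to $E = V$ over the field $K$. Recall that $S_\nu(E)$ is the universal target for maps out of $E^{\times \nu}$ that are multilinear, column-alternating, and satisfy the exchange (straightening) relations. So to produce a map $S_\lambda(E) \otimes S_\mu(E) \to S_{\lambda\attach\mu}(E)$ it suffices, by the universal property applied twice (i.e. by the universal property of the tensor product of the two universal objects), to exhibit a map $E^{\times\lambda} \times E^{\times\mu} \to S_{\lambda\attach\mu}(E)$ that is multilinear and, separately in the $\lambda$-variables and in the $\mu$-variables, column-alternating and satisfies the exchange relations.

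The key combinatorial observation is that the Young diagram of $\lambda\attach\mu$ is obtained by gluing the rows of $\mu$ to the right of the corresponding rows of $\lambda$: in row $i$, the first $\lambda_i$ boxes come from $\lambda$ and the next $\mu_i$ boxes come from $\mu$. This gives a bijection between the boxes of $\lambda\attach\mu$ and the disjoint union of the boxes of $\lambda$ and the boxes of $\mu$. First I would use this bijection to define the candidate map: send a filling of $\lambda$ together with a filling of $\mu$ to the corresponding filling of $\lambda\attach\mu$, then pass to its class in $S_{\lambda\attach\mu}(E)$. Multilinearity is immediate. For the column-alternating property: the columns of $\lambda\attach\mu$ split into those entirely coming from $\lambda$ (the first $\lambda_1^\dag$ columns, reindexed appropriately) — no, more carefully, since horizontal concatenation adds row lengths, each column of $\lambda\attach\mu$ is a column of either $\lambda$ or $\mu$ but never mixes the two, because the $\mu$-boxes in row $i$ sit strictly to the right of all $\lambda$-boxes, and the diagram is left-justified. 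Hence alternation within the $\lambda$-variables and within the $\mu$-variables is inherited from alternation in the columns of $\lambda\attach\mu$. The exchange relations for $S_{\lambda\attach\mu}(E)$ that involve only boxes of $\lambda$ (resp. only boxes of $\mu$) likewise specialize to the required exchange relations on the $\lambda$-part (resp. $\mu$-part), since again those relations are supported within a single row-block.

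Given this, the universal property yields the desired $R$-module homomorphism $\phi: S_\lambda(E) \otimes S_\mu(E) \to S_{\lambda\attach\mu}(E)$. Surjectivity follows because $S_{\lambda\attach\mu}(E)$ is spanned by the classes of tableaux (even just semistandard ones), and every such class is the image under $\phi$ of the tensor of the corresponding sub-tableaux on $\lambda$ and on $\mu$. Finally, $GL(V)$-equivariance: when $E = V$ and we let $g \in GL(V)$ act, the candidate map on $V^{\times\lambda}\times V^{\times\mu}$ is manifestly $GL(V)$-equivariant (it just relabels boxes), and equivariance passes through the universal property, so $\phi$ is $GL(V)$-equivariant.

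The main obstacle I anticipate is bookkeeping with the exchange relations: one must check that no generating relation of $S_{\lambda\attach\mu}(V)$ "crosses" the $\lambda$–$\mu$ divide in a way that fails to come from a relation on the factors — but since the exchange relations are row-local (they compare a box with boxes directly below it in adjacent columns of the \emph{same two rows}) and the divide is a vertical cut consistent with left-justification, every relation is supported entirely in the $\lambda$-block or entirely in the $\mu$-block, so this goes through. An alternative, cleaner route avoiding relations entirely is to invoke the standard fact (e.g. \cite[Section~8.1, Ex.~15]{Fulton} or \cite[Ch.~2]{Wey}) that $S_\lambda(V)$ is the image of the composite $\bigwedge^{\lambda_1^\dag}\!V \otimes \cdots \to \bigotimes V \to \Sym^{\lambda_1}\!V \otimes \cdots$, and observe that the analogous composite for $\lambda\attach\mu$ factors through the tensor product of those for $\lambda$ and $\mu$ via the natural maps $\bigwedge^{a+b}V \to \bigwedge^a V \otimes \bigwedge^b V$ and $\Sym^a V \otimes \Sym^b V \to \Sym^{a+b}V$; chasing images then gives the surjection immediately and $GL(V)$-equivariance is automatic. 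I would present the second route as the main argument and remark on the first.
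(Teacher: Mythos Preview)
Your first route has a real gap. Under the row-by-row gluing bijection you describe, it is \emph{not} true that every column of $\lambda\attach\mu$ lies entirely inside the $\lambda$-block or entirely inside the $\mu$-block. Take $\lambda=(3,1)$ and $\mu=(2,2)$, so $\lambda\attach\mu=(5,3)$. In row~$1$ the $\mu$-boxes occupy positions $4,5$; in row~$2$ they occupy positions $2,3$. Hence column~$2$ of $\lambda\attach\mu$ carries a $\lambda$-box in row~$1$ and a $\mu$-box in row~$2$. Column-alternation in $S_{\lambda\attach\mu}(V)$ for that column then imposes a relation that mixes $\lambda$- and $\mu$-variables, so from this bijection you cannot conclude that the map factors through $S_\lambda(V)\otimes S_\mu(V)$. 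Your justification (``the $\mu$-boxes in row $i$ sit strictly to the right of all $\lambda$-boxes, and the diagram is left-justified'') only says the divide is vertical \emph{within each row}; the divide is not a single vertical line through the whole diagram unless $\lambda$ is rectangular.

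The paper's proof repairs exactly this by using a different bijection on boxes. Since $(\lambda\attach\mu)^\dag$ is the decreasing rearrangement of the concatenated sequence $(\lambda^\dag_1,\lambda^\dag_2,\dots,\mu^\dag_1,\mu^\dag_2,\dots)$, the multiset of column lengths of $\lambda\attach\mu$ is the union of the column lengths of $\lambda$ and of $\mu$. Matching each column of $\lambda\attach\mu$ with a column of $\lambda$ or of $\mu$ of the same length (keeping the row index of each box fixed) gives a box bijection under which every column of $\lambda\attach\mu$ sits wholly inside one factor. With \emph{that} bijection your factoring argument goes through, and this is precisely what the paper does.

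Your second route is morally the same observation, but also slightly off as written. In the description of $S_\nu(V)$ as the image of $\bigotimes_j\bigwedge^{\nu^\dag_j}V\to\bigotimes_i\Sym^{\nu_i}V$, the exterior side for $\nu=\lambda\attach\mu$ is, after reordering tensor factors, already the tensor product of the exterior sides for $\lambda$ and for $\mu$: no comultiplication $\bigwedge^{a+b}V\to\bigwedge^a V\otimes\bigwedge^b V$ is needed, because no column is being split. What is needed on the symmetric side is the multiplication $\Sym^{\lambda_i}V\otimes\Sym^{\mu_i}V\to\Sym^{\lambda_i+\mu_i}V$. Once you drop the spurious exterior comultiplication and replace it by the reordering of wedge factors, this becomes the paper's column-rearrangement argument in different clothing.
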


We will discuss a stronger statement, i.e., Corollary~\ref{L-R} later using the theory of good filtrations. Here, we indicate a combinatorial proof of the above proposition for the reader who is more familiar with Young tableaux.

\begin{proof} [Proof of Proposition~\ref{schur:surj}] 
One way to construct the partition $\lambda + \mu$ is to take all the columns of (the Young diagrams) of $\lambda$ and $\mu$ and rearrange them in decreasing order. This gives a map $V^{\times \lambda} \times V^{\times \mu} \twoheadrightarrow V^{\times (\lambda \attach \mu)} \twoheadrightarrow S_{\lambda + \mu}(V)$. We leave it to the reader to check that this map factors to give a surjective map $S_{\lambda}(V) \otimes S_{\mu}(V) \twoheadrightarrow S_{\lambda \attach \mu} (V)$ as required.
\end{proof}

\subsection{Polynomial representations of $\GL_m$ of degree $n$} 
We will only need Corollary~\ref{cat:ss} from this section, but a general reference for the definitions and results in this section is \cite{Totaro}. We first note that $S_{\lambda}(V)$ is a representation of $\GL(V)$. It is an irreducible representation in characteristic $0$, but not necessarily in positive characteristic. We denote by $\Rep^{\rm pol}(\GL_m)_d$, the category of polynomial representations of $\GL_m$ of degree $d$. This category is a highest weight category, and the costandard objects are precisely the Schur modules $S_{\lambda}(K^m)$ for $|\lambda| = d$. Totaro was able to give upper bounds on the homological dimension of this category, and compute it precisely under mild assumptions, see \cite{Totaro}.

\begin{theorem}[Totaro] \label{Totaro}
Let $\kar(K) = p$, and let $\alpha_p(d)$ denote the sum of the digits in the base $p$ expansion of $d$. The homological dimension of $\Rep^{\rm pol}(\GL_m)_d$ is $ \leq 2(d - \alpha_p(d))$. Further, we have equality if $m \geq d$.
\end{theorem}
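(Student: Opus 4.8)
The plan is to pass to strict polynomial functors, reduce to a global-dimension computation, and then exploit the Frobenius twist together with the Friedlander--Suslin computation of $\mathrm{Ext}^\ast_{\mathcal P}(I^{(r)},I^{(r)})$. Write $\mathcal{P}_d$ for the category of homogeneous strict polynomial functors of degree $d$ over $K$, equivalently the category of modules over the Schur algebra $S(d,d)$. Evaluation on $K^m$ gives an exact functor $\mathcal{P}_d\to\Rep^{\rm pol}(\GL_m)_d$ which is an equivalence when $m\ge d$, and for $m<d$ realises the target as a truncation of $\mathcal P_d$ with respect to the saturated set of partitions of $d$ with at most $m$ parts; such truncations of highest weight categories do not raise homological dimension (standard for Schur algebras), so the bound for general $m$ follows from the case $m\ge d$, where the two categories agree. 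For a finite-dimensional algebra the global dimension equals $\sup\{\,n:\mathrm{Ext}^n(L,L')\ne0\ \text{for simple }L,L'\,\}$, so everything reduces to locating the top non-vanishing $\mathrm{Ext}$ between simple objects of $\mathcal P_d$; it suffices to prove $\mathrm{gl.dim}\,\mathcal P_d=2(d-\alpha_p(d))$.

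For the lower bound, write $d=\sum_i a_ip^i$ in base $p$ and set $F=\bigotimes_i (I^{(i)})^{\otimes a_i}\in\mathcal P_d$, where $I^{(i)}$ is the $i$-th Frobenius twist of the identity functor. The blocks of tensor factors attached to distinct indices $i$ lie in pairwise distinct degrees $a_ip^i$, so there are no cross terms and the exponential formula for $\mathrm{Ext}$ in $\mathcal P$ exhibits $\bigotimes_i\mathrm{Ext}^\ast_{\mathcal P}\big((I^{(i)})^{\otimes a_i},(I^{(i)})^{\otimes a_i}\big)$ as a direct summand of $\mathrm{Ext}^\ast_{\mathcal P}(F,F)$. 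By the Friedlander--Suslin computation $\mathrm{Ext}^\ast_{\mathcal P}(I^{(i)},I^{(i)})$ is non-zero in top cohomological degree $2(p^i-1)$, and passing to $a_i$-fold tensor powers pushes the top class to degree $a_i\cdot 2(p^i-1)$; multiplying these top classes over all $i$ gives a non-zero element of $\mathrm{Ext}^{2(d-\alpha_p(d))}_{\mathcal P}(F,F)$, since $\sum_i a_i\cdot2(p^i-1)=2(d-\alpha_p(d))$. Hence $\mathrm{gl.dim}\,\mathcal P_d\ge 2(d-\alpha_p(d))$, and equality holds for $m\ge d$.

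For the upper bound I would induct on $d$. If $d<p$ then $\mathcal P_d$ is semisimple (the Schur algebra $S(d,d)$ is semisimple exactly when $p=0$ or $p>d$) and $2(d-\alpha_p(d))=0$. For the inductive step, given a simple $L(\lambda)\in\mathcal P_d$, the Steinberg tensor product theorem factors it as $L(\lambda)\cong L(\lambda^{(0)})\otimes M^{(1)}$ with $\lambda^{(0)}$ the $p$-restricted bottom of $\lambda$ and $M$ simple in $\mathcal P_{d'}$, $d'=(d-|\lambda^{(0)}|)/p<d$; since the tensor product in $\mathcal P$ is exact, $\mathrm{pd}\,L(\lambda)\le\mathrm{pd}\,L(\lambda^{(0)})+\mathrm{pd}\,M^{(1)}$. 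The Friedlander--Suslin twist-compatibility formula, in the sharpened form of Cha{\l}upnik and Touz{\'e}, together with the fact that $\mathrm{Ext}^\ast_{\mathcal P}(I^{(1)},I^{(1)})$ is concentrated in cohomological degrees $\le 2(p-1)$, yields $\mathrm{pd}\,G^{(1)}\le \mathrm{pd}\,G+2(p-1)\deg G$ for every $G$; so $\mathrm{pd}\,M^{(1)}\le 2(d'-\alpha_p(d'))+2(p-1)d'=2\big(pd'-\alpha_p(pd')\big)$ by the inductive hypothesis. Combining the two estimates, and using subadditivity of $p$-adic digit sums $\alpha_p(a+b)\le\alpha_p(a)+\alpha_p(b)$, the whole upper bound comes down to the assertion that $\mathrm{pd}\,L(\nu)\le 2(|\nu|-\alpha_p(|\nu|))$ for every $p$-restricted partition $\nu$.

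The main obstacle is precisely this last assertion, the $p$-restricted case, which is not reached by the Steinberg factorisation and so must be proved independently --- and it is where the sharp arithmetic constant $2(d-\alpha_p(d))$, rather than a crude bound of the order of the number of partitions of $d$, is forced. To handle it I would study the projective cover of $L(\nu)$ in $\mathcal P_{|\nu|}$ directly, bounding the length of a minimal projective resolution via its Weyl (standard) filtration and the Jantzen sum formula --- or, equivalently, via the explicit divided-power complexes that Totaro constructs --- with a comparison to the cohomology of the first Frobenius kernel. Granting this, the remaining bookkeeping (tracking which cohomological degrees the truncated-polynomial factors $\mathrm{Ext}^\ast_{\mathcal P}(I^{(i)},I^{(i)})$ occupy through the iterated twist, and checking the truncation step for $m<d$) is routine, and assembles into the stated bound $2(d-\alpha_p(d))$, with equality when $m\ge d$.
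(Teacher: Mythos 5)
The paper cites Theorem~\ref{Totaro} directly from Totaro's article and gives no proof of its own, so there is no internal argument to compare against; I will judge your sketch on its own terms. Your high-level plan is sensible: pass to strict polynomial functors $\mathcal P_d\simeq S(d,d)\text{-mod}$, handle $m<d$ by a truncation argument, prove the lower bound from the Friedlander--Suslin computation of $\mathrm{Ext}^*_{\mathcal P}(I^{(r)},I^{(r)})$ together with the exponential/K\"unneth formula, and prove the upper bound by induction on $d$ via the Steinberg factorisation and Frobenius-twist compatibility of $\mathrm{Ext}$. The arithmetic $\sum_i a_i\cdot 2(p^i-1)=2(d-\alpha_p(d))$ is correct and the lower-bound part is plausible as sketched.

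However, the proposal has a genuine gap at exactly the point where the theorem has content. After the Steinberg/twist reduction you correctly identify that everything rests on showing $\mathrm{pd}\,L(\nu)\le 2(|\nu|-\alpha_p(|\nu|))$ for $p$-restricted $\nu$, and at that point you simply state that you ``would study the projective cover\dots\ Granting this, the remaining bookkeeping is routine.'' That is a placeholder, not an argument. The $p$-restricted case is the base case of your induction, is by your own admission not reached by the Steinberg factorisation, and is precisely where the sharp constant $2(d-\alpha_p(d))$ has to be produced by a genuine construction; in Totaro's paper this is done by exhibiting explicit small projective resolutions of the simple functors, and your sketch contains no substitute for that step. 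Two secondary points also need care: the claim that truncation to $\Rep^{\rm pol}(\GL_m)_d$ for $m<d$ cannot increase homological dimension is not a formal property of highest-weight truncations and needs a Schur-algebra--specific justification, and the inequality $\mathrm{pd}\,G^{(1)}\le\mathrm{pd}\,G+2(p-1)\deg G$ should be derived carefully from the twist-compatibility isomorphism rather than asserted.
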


\begin{corollary} \label{cat:ss}
Assume $p>d$. Then $\Rep^{\rm pol}(\GL_m)_d$ is semi-simple. Further, the Schur modules $S_{\lambda}(K^m)$ for $\lambda \vdash d$ are irreducible representations of $\GL_m$. 
\end{corollary}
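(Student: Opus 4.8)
The plan is to read off both statements from Totaro's theorem (Theorem~\ref{Totaro}), using only the upper bound part, which holds for every $m$. The key arithmetic observation is that $p > d$ forces the base-$p$ expansion of $d$ to consist of the single digit $d$, so $\alpha_p(d) = d$ and therefore $2(d - \alpha_p(d)) = 0$. Hence the homological dimension of $\Rep^{\rm pol}(\GL_m)_d$ is $0$.

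Next I would convert ``homological dimension $0$'' into semisimplicity of the category. The category $\Rep^{\rm pol}(\GL_m)_d$ is equivalent to the category of finite-dimensional modules over the Schur algebra $S(m,d)$, which is a finite-dimensional $K$-algebra; a finite-dimensional algebra has global dimension $0$ if and only if it is semisimple, i.e., if and only if every module decomposes as a direct sum of simple modules. This gives the first assertion.

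For the second assertion, recall (as stated just before Theorem~\ref{Totaro}) that $\Rep^{\rm pol}(\GL_m)_d$ is a highest weight category whose costandard objects are exactly the Schur modules $S_{\lambda}(K^m)$ with $\lambda \vdash d$; those with $l(\lambda) > m$ are zero, in which case there is nothing to prove. In any highest weight category the costandard object $\nabla(\lambda)$ has simple socle $L(\lambda)$, with $[\nabla(\lambda) : L(\lambda)] = 1$ and all other composition factors of strictly smaller highest weight. Once the category is semisimple, $\nabla(\lambda)$ is a direct sum of simple objects; having simple socle, it must coincide with $L(\lambda)$, and so $S_{\lambda}(K^m)$ is irreducible.

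The only real work is bookkeeping: being precise about the dictionary between Totaro's ``homological dimension'', the global dimension of the Schur algebra, and semisimplicity of the module category, together with the standard fact that costandard modules in a highest weight category have simple socle. There is no genuine difficulty once Theorem~\ref{Totaro} is in hand. As an aside, one could avoid Totaro entirely: $p > d$ implies $p \nmid d!$, so the group algebra of the symmetric group on $d$ letters is semisimple by Maschke's theorem, and Schur--Weyl duality then forces $S(m,d)$ to be semisimple as well; but invoking Theorem~\ref{Totaro} is the most economical route given the present setup.
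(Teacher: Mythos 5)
Your proof is correct and follows essentially the same route as the paper's: both deduce homological dimension zero (hence semisimplicity) from Totaro's theorem via the observation that $p>d$ forces $\alpha_p(d)=d$, and both then obtain irreducibility of the $S_\lambda(K^m)$ from the fact that costandard objects in a semisimple highest weight category must be simple. You simply spell out the intermediate steps (Schur algebra, global dimension, simple socle) that the paper leaves implicit.
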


\begin{proof}
The semisimplicity of $\Rep^{\rm pol}(\GL_m)_d$ follows from the aforementioned Totaro's theorem on homological dimension. The costandard objects in any semisimple highest weight category are irreducible. Hence the Schur modules $S_{\lambda}(K^m)$'s with $\lambda \vdash d$ are irreducible.
\end{proof}







\section{Weyl's theorem in characteristic zero} \label{Sec:Weyl0}
We give a short proof of Weyl's theorem in characteristic zero based on the representation theory of the general linear group. Let $G$ be a group defined over a field $K$ of characteristic zero, and let $V$ be an $n$-dimensional representation. For any $m$, we identify $V^m$ with $V \otimes K^m$, where the action of $G$ on $K^m$ is trivial. Now, by Cauchy's formula, we can write 

$$
K[V \otimes K^m] = \Sym(V^*\otimes K^m) = \bigoplus_{\lambda} S_\lambda(V^*) \otimes S_\lambda(K^m).
$$

The direct sum in the above is over partitions of all sizes. The crucial observation we need is that if $l(\lambda) > n$, then $S_\lambda(V^*) = 0$. So, only partitions that have $l(\lambda) \leq n$ give non-trivial summands. Combining this with taking $G$-invariants, we get
$$
K[V \otimes K^m]^G  = \bigoplus_{l(\lambda) \leq n} S_\lambda(V^*)^G \otimes S_\lambda(K^m).
$$

One can interpret this as the isotypic decomposition of $K[V \otimes K^m]^G$ with respect to the action of $\GL_m$. The various irreducibles appearing in this decomposition are of the form $S_\lambda(K^m)$, and $S_\lambda(V^*)^G$ is the multiplicity space.

For $m \geq n$, we have an inclusion $K[V \otimes K^n]^G \hookrightarrow K[V \otimes K^m]^G$. Let $S$ be a set of generators for $K[V \otimes K^n]^G$. Let us denote by $R$ the subalgebra of $K[V \otimes K^m]^G$ that is generated by $\left<S\right>_{\GL_m}$. We want to show that $R$ is all of $K[V \otimes K^m]^G$. Since $R$ is $\GL_m$-stable and contains $K[V \otimes K^n]^G$, it suffices to show that the smallest $\GL_m$-stable subspace containing $K[V \otimes K^n]^G$ is all of $K[V \otimes K^m]^G$. Using the isotypic decomposition from above, it suffices to show that $\left<S_\lambda(V^*)^G \otimes S_\lambda(K^n)\right>_{\GL_m} = S_\lambda(V^*)^G \otimes S_\lambda(K^m)$ for all $\lambda$ such that $l(\lambda) \leq n$. It is easy to see that it suffices to prove that $\left<S_\lambda(K^n) \right>_{\GL_m} = S_\lambda(K^m)$.

Now, observe that $S_\lambda(K^m)$ is an irreducible representation of $\GL_m$ and so has no proper $\GL_m$-stable subspaces. Since $S_\lambda(K^n)$ is non-empty for $l(\lambda) \leq n$, we have $\left< S_\lambda(K^n) \right>_{\GL_m} = S_\lambda(K^m)$.


\section{Polarization} \label{Sec:pol}
Let $E$ be a $\GL(W)$ representation. For any subset $S \subseteq E$, recall that we define $\left< S \right>_{\GL(W)}$ to be the smallest $\GL(W)$ stable subspace containing $S$. This is often referred to as polarization. In more concrete terms $\left< S \right>_{\GL(W)}$ consists of elements $e \in E$ that can be written as a sum $e = \sum_i g_i s_i$ with $s_i \in S$ and $g_i \in \GL(W)$. Let us note here that the definition of $\left<S\right>_{\GL(W)}$ depends on the ambient $\GL(W)$ representation $E$. For our discussion, it will almost always be obvious what the ambient representation is.

Understanding the following special case is the most crucial part of this paper.

\begin{problem} \label{prob:pol.schur}
For an inclusion of vector spaces $V \subseteq W$, we have $S_\lambda(V) \subseteq S_\lambda(W)$. When is $\left<S_{\lambda}(V) \right>_{\GL(W)} = S_{\lambda}(W)$?
\end{problem}

In characteristic $0$, this is always true as long as $S_{\lambda}(V)$ is non-empty, because the module $S_{\lambda}(W)$ is an irreducible $GL(W)$-module. This was a crucial part in the proof of Weyl's theorem in characteristic zero in the preceding section. In positive characteristic, this is often not the case as the following example shows:

\begin{example}
Suppose $\kar (K) = 2$, and let $\lambda = (2)$, so $S_{\lambda} = \Sym^2$. Consider $K^1 \hookrightarrow K^2$, and let $x,y$ be a basis for $K^2$ with $x$ being a basis for $K^1$. Then we have $\Sym^2(K^1) = \spa (x^2)$, where as $\Sym^2(K^2) = \spa(x^2,y^2,xy)$. It is easy to see that $\left< \Sym^2(K^1) \right>_{\GL_2} = \spa(x^2,y^2)$ which is a proper subset of $\Sym^2(K^2)$.
\end{example}
 
\begin{remark}
If $\dim V \geq |\lambda|$, we will always have $\left<S_{\lambda}(V) \right>_{\GL(W)} = S_{\lambda}(W)$. This is a simple consequence of the description of the Schur module in terms of semistandard Young tableaux. We need a much stronger statement to be of any use for our purposes.
\end{remark}

\begin{proposition} \label{prop:tensor:pol}
Suppose $V = V_1 \oplus V_2 \subseteq W$. Further, suppose we have $\left<S_{\mu}(V_1) \right>_{\GL(W)} = S_{\mu}(W)$ and $\left< S_{\nu}(V_2) \right>_{\GL(W)} = S_{\nu}(W)$. Then $\left<S_{\mu}(V_1) \otimes S_{\nu} (V_2) \right>_{\GL(W)} = S_{\mu}(W) \otimes S_{\nu}(W)$.
\end{proposition}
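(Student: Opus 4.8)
The plan is to reduce the statement about the tensor product $S_\mu(W)\otimes S_\nu(W)$ to the already-known hypotheses about the two factors, by exploiting the bilinearity of the $\GL(W)$-action on a tensor product. First I would fix bases: write $W = V\oplus W'$ with $V = V_1\oplus V_2$, and observe that since $\left<S_\mu(V_1)\right>_{\GL(W)} = S_\mu(W)$, every element of $S_\mu(W)$ is a finite sum $\sum_i g_i u_i$ with $g_i\in\GL(W)$ and $u_i\in S_\mu(V_1)$; similarly every element of $S_\nu(W)$ is $\sum_j h_j v_j$ with $h_j\in\GL(W)$, $v_j\in S_\nu(V_2)$. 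Since pure tensors $u\otimes v$ span $S_\mu(W)\otimes S_\nu(W)$, it suffices to show each $u\otimes v$ lies in $\left<S_\mu(V_1)\otimes S_\nu(V_2)\right>_{\GL(W)}$. Expanding, $u\otimes v = \big(\sum_i g_i u_i\big)\otimes\big(\sum_j h_j v_j\big) = \sum_{i,j} (g_i u_i)\otimes(h_j v_j)$, so it is enough to handle a single term $(g u_1)\otimes(h v_2)$ with $u_1\in S_\mu(V_1)$, $v_2\in S_\nu(V_2)$, $g,h\in\GL(W)$.

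The difficulty is that the diagonal $\GL(W)$-action on $S_\mu(W)\otimes S_\nu(W)$ only lets us apply the \emph{same} group element to both factors, whereas the expression above has $g$ acting on the first and $h$ on the second. The key trick I expect to use is a separation-of-variables argument: choose a one-parameter family, or rather exploit that $V_1$ and $V_2$ sit in complementary coordinates. Concretely, pick $g'\in\GL(W)$ that agrees with $g$ on the subspace spanned by $V_1$ (where $u_1$ lives) and with $h$ on the subspace spanned by $V_2$ (where $v_2$ lives) — this is possible precisely because $u_1$ is supported on $V_1$-coordinates, $v_2$ on $V_2$-coordinates, and $V_1\cap V_2 = 0$, so we may define $g'$ block-diagonally with respect to the decomposition $W = V_1\oplus V_2\oplus W''$ (after enlarging $W'$ if needed), setting $g'|_{V_1} = g|_{V_1}$... but wait, $g u_1$ need not be supported on $V_1$. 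The cleaner route: since $S_\mu$ is a functor, $g u_1 = S_\mu(g)(u_1)$, and we want $S_\mu(g)(u_1)\otimes S_\nu(h)(v_2) = (S_\mu\otimes S_\nu)(?)(u_1\otimes v_2)$ for some element; this holds if we can find $k\in\GL(W)$ with $S_\mu(k)(u_1) = S_\mu(g)(u_1)$ and $S_\nu(k)(v_2) = S_\nu(h)(v_2)$ simultaneously.

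To produce such a $k$, I would argue as follows. The element $u_1\in S_\mu(V_1)$ only depends on how a linear map acts on $V_1$; more precisely, $S_\mu(k)(u_1)$ depends only on $k|_{V_1}$ composed with the inclusion $V_1\hookrightarrow W$, i.e. only on the restriction $k|_{V_1}: V_1\to W$. Likewise $S_\nu(k)(v_2)$ depends only on $k|_{V_2}: V_2\to W$. Since $V_1$ and $V_2$ are complementary subspaces of $V\subseteq W$, specifying $k$ on $V_1$ and on $V_2$ independently is consistent, and such a $k$ can be extended to an element of $\GL(W)$: take $k$ to send $V_1$ via $g|_{V_1}$-then-include and $V_2$ via $h|_{V_2}$-then-include. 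We must check this partial map $V_1\oplus V_2 = V\to W$ extends to an invertible endomorphism of $W$; if the images happen to overlap we first conjugate/perturb $g$ and $h$ within their cosets (they can be chosen generically, since $\GL(W)$ is irreducible and the "bad" locus is closed and proper) so that $g(V_1)$ and $h(V_2)$ are in general position, hence $V\to W$ is injective and extends to $k\in\GL(W)$. Then $k u_1\otimes k v_2 = g u_1\otimes h v_2$ lies in $\left<S_\mu(V_1)\otimes S_\nu(V_2)\right>_{\GL(W)}$, completing the reduction.

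The main obstacle is precisely this last extension/general-position step: making rigorous that one may replace the independently chosen $g$, $h$ by a single $k\in\GL(W)$ acting correctly on both factors. I expect the honest write-up either invokes a functoriality lemma ("$S_\mu(\iota_1)$ depends only on the restriction to $V_1$") plus a density argument over the algebraically closed field $K$, or, more slickly, factors the whole problem through $\GL(V_1)\times\GL(V_2)\times\cdots \hookrightarrow \GL(W)$ and uses that polarization is transitive. Either way, once a single $k$ is found, everything else — bilinearity, spanning by pure tensors, linearity of $\left<-\right>_{\GL(W)}$ — is routine.
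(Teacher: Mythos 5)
Your plan matches the paper's almost exactly: reduce to a single pure tensor $(g u_1)\otimes (h v_2)$ with $u_1\in S_\mu(V_1)$, $v_2\in S_\nu(V_2)$; use that $S_\mu(k)(u_1)$ depends only on $k|_{V_1}$ and $S_\nu(k)(v_2)$ only on $k|_{V_2}$; then assemble a single endomorphism $\sigma$ of $W$ agreeing with $g$ on $V_1$ and with $h$ on $V_2$ (the paper's $\sigma_{ij}=[A_i\mid Q_j]$). The genuine difficulty, which you correctly spot, is that $\sigma$ need not be invertible, and that is precisely where your argument has a gap. The ``perturb $g,h$ within their cosets'' step does not go through: since $gu_1\in S_\mu(W)$ is the fixed target and $S_\mu(g')(u_1)$ depends only on $g'|_{V_1}$, the admissible $g'$ form a coset on which $g'(V_1)$ does not move. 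For instance with $\mu=\nu=(1)$, $W=K^2$, $V_1=Ke_1$, $V_2=Ke_2$, $u_1=e_1$, $v_2=e_2$, $g=\mathrm{id}$ and $h$ the coordinate swap, every admissible $g'$ has $g'(V_1)=Ke_1$ and every admissible $h'$ has $h'(V_2)=Ke_1$; the ``bad locus'' is the entire coset, not a proper closed subset, so genericity gives you nothing, yet the conclusion ($e_1\otimes e_1\in\left<e_1\otimes e_2\right>_{\GL_2}$) is still true and needs a different reason.

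The repair is Zariski density of $\GL(W)$ in $\operatorname{End}(W)$. The map $\psi:\operatorname{End}(W)\to S_\mu(W)\otimes S_\nu(W)$, $\sigma\mapsto S_\mu(\sigma)(u_1)\otimes S_\nu(\sigma)(v_2)$, is polynomial, and for a polynomial map the linear span of the image of a Zariski-dense subset equals the span of the whole image: if $L=\operatorname{span}\psi(\GL(W))$, then $\psi^{-1}(L)$ is closed and contains the dense $\GL(W)$, hence is all of $\operatorname{End}(W)$. So your (possibly singular) $\sigma$ may be used directly, and $\sigma\cdot(u_1\otimes v_2)\in\left<S_\mu(V_1)\otimes S_\nu(V_2)\right>_{\GL(W)}$. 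I would also flag that the paper's own workaround for the same issue --- writing $\sigma\cdot(e\otimes f)=(cI+\sigma)\cdot(e\otimes f)-(cI)\cdot(e\otimes f)$ with $cI+\sigma$ invertible --- tacitly treats the $\operatorname{End}(W)$-action on $S_\mu(W)\otimes S_\nu(W)$ as linear in the endomorphism, which it is not (it is polynomial of degree $|\mu|+|\nu|$); the density argument is the clean way to close both your gap and that one.
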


\begin{proof}
Consider $E \in S_{\mu}(W)$ and $F \in S_{\nu}(W)$. We will show that $E \otimes F \in \left<S_{\mu}(V_1) \otimes S_{\nu} (V_2) \right>_{\GL(W)}$. Since we have $\left<S_{\mu}(V_1) \right>_{\GL(W)} = S_{\mu}(W)$, we can write $E = \sum_{i} g_ie_i$ for some $g_i \in \GL(W)$ and $e_i \in S_{\mu}(V_1)$. Similarly, we can write $F = \sum_{j} h_jf_j$ for some $h_j \in \GL(W)$ and $f_j \in S_{\nu}(V_2)$.

Decompose $W = W_1 \oplus W_2$ with $V_i \subseteq W_i$. Let $(w_1,\dots,w_k)$ be a basis for $W_1$ and $(w'_1,\dots,w'_{l})$ be a basis for $W_2$. Let $(w_1,\dots,w_k,w'_1,\dots,w'_l)$ be an ordered basis for $W$. In this ordered basis, we have a block decomposition $g_i = [ A_i \ |\ B_i]$, where $A_i$ represents the first $k$ columns, and $B_i$ the last $l$ columns. Observe that since $e_i \in S_{\mu}(V_1)$, the action of $g_i$ on $e_i$ only depends on $A_i$. Similarly, write $h_j = [P_j\ |\ Q_j]$, and the action of $h_j$ on $f_j$ only depends on $Q_j$. Hence, if we define $\sigma_{ij} = [A_i \ |\ Q_j]$, we have 
$$
E \otimes F = \sum_{i,j} \sigma_{ij} \cdot (e_i \otimes f_j).
$$

There is a small issue that $\sigma_{ij}$ may not be invertible, but this is easy to circumvent. For some non-zero constant $c_{ij}$, we have $c_{ij}I + \sigma_{ij}$ is invertible, where $I$ denotes the identity transformation. Then we can write 

$$
E \otimes F = \sum_{i,j} (c_{ij} I + \sigma_{ij}) \cdot (e_i \otimes f_j) - (c_{ij}I) \cdot (e_i \otimes f_j).
$$

Since $c_{ij} I + \sigma_{ij}$ as well as $c_{ij}I$ are elements of $\GL(W)$, we have that
$$E \otimes F \in \left<S_{\mu}(V_1) \otimes S_{\nu} (V_2) \right>_{\GL(W)}.$$ 
The proposition follows since elements of the form $E \otimes F$ span $S_\mu (W) \otimes S_{\nu}(W)$.
\end{proof}

\begin{theorem} \label{theo:pol:main}
Let $\lambda = \mu \attach \nu$, and let $V_1 \oplus V_2 = V \hookrightarrow W$. Further, suppose we have $\left<S_{\mu}(V_1) \right>_{\GL(W)} = S_{\mu}(W)$ and $\left< S_{\nu}(V_2) \right>_{\GL(W)} = S_{\nu}(W)$. Then we have $\left<S_{\lambda}(V)\right>_{\GL(W)} = S_{\lambda}(W)$.
\end{theorem}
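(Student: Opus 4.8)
The plan is to combine Proposition~\ref{schur:surj} with Proposition~\ref{prop:tensor:pol}. The idea is that $S_\lambda(W) = S_{\mu \attach \nu}(W)$ is a quotient of $S_\mu(W) \otimes S_\nu(W)$, and polarization is well-behaved under taking $\GL(W)$-equivariant quotients, so it suffices to polarize the tensor product and then push forward through the surjection. This reduces everything to the statement we already have in hand, namely Proposition~\ref{prop:tensor:pol}, applied with the given hypotheses on $V_1$ and $V_2$.

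More concretely, first I would record the elementary fact that if $\phi\colon E \twoheadrightarrow E'$ is a $\GL(W)$-equivariant surjection and $T \subseteq E$ is a subset with $\left<T\right>_{\GL(W)} = E$, then $\left<\phi(T)\right>_{\GL(W)} = E'$. This is immediate: $\phi$ intertwines the $\GL(W)$-actions, so $\phi$ maps the smallest $\GL(W)$-stable subspace containing $T$ onto the smallest $\GL(W)$-stable subspace containing $\phi(T)$, and the former is all of $E$ by hypothesis while the latter lands in $\phi(E) = E'$. Next I would apply Proposition~\ref{schur:surj} to get a $\GL(W)$-equivariant surjection $\pi\colon S_\mu(W) \otimes S_\nu(W) \twoheadrightarrow S_{\mu \attach \nu}(W) = S_\lambda(W)$. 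By Proposition~\ref{prop:tensor:pol} and the given hypotheses, $\left<S_\mu(V_1) \otimes S_\nu(V_2)\right>_{\GL(W)} = S_\mu(W) \otimes S_\nu(W)$, so applying the surjection fact with $E = S_\mu(W) \otimes S_\nu(W)$, $E' = S_\lambda(W)$, $\phi = \pi$, and $T = S_\mu(V_1) \otimes S_\nu(V_2)$ gives $\left<\pi(S_\mu(V_1) \otimes S_\nu(V_2))\right>_{\GL(W)} = S_\lambda(W)$.

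The one remaining point is to identify $\pi(S_\mu(V_1) \otimes S_\nu(V_2))$ inside $S_\lambda(V)$. Here I would use naturality of the surjection in Proposition~\ref{schur:surj}: applied to the subspace $V = V_1 \oplus V_2 \subseteq W$, the same combinatorial construction (rearranging columns of $\mu$ and $\nu$ in decreasing order) gives a surjection $S_\mu(V) \otimes S_\nu(V) \twoheadrightarrow S_\lambda(V)$ compatible with the inclusion into $W$, and since $S_\mu(V_1) \subseteq S_\mu(V)$ and $S_\nu(V_2) \subseteq S_\nu(V)$, the image $\pi(S_\mu(V_1) \otimes S_\nu(V_2))$ is contained in $S_\lambda(V)$. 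Therefore $S_\lambda(V) \supseteq \pi(S_\mu(V_1) \otimes S_\nu(V_2))$, and since polarizing a larger set can only give a larger (hence still all of $S_\lambda(W)$) span, we conclude $\left<S_\lambda(V)\right>_{\GL(W)} = S_\lambda(W)$.

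I expect no serious obstacle here: the argument is a formal manipulation of polarization through equivariant quotients, and all the real content has already been extracted into Propositions~\ref{schur:surj} and~\ref{prop:tensor:pol}. The only place requiring a little care is the naturality/compatibility of the surjection $S_\mu \otimes S_\nu \twoheadrightarrow S_{\mu \attach \nu}$ with respect to the inclusion $V \hookrightarrow W$ — one wants the diagram relating the two surjections (for $V$ and for $W$) to commute, which follows because the construction is functorial in the underlying module, but it is worth stating explicitly rather than leaving implicit.
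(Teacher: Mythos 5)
Your proof is correct and follows essentially the same route as the paper: both push the polarization identity for $S_\mu(V_1)\otimes S_\nu(V_2)$ from Proposition~\ref{prop:tensor:pol} through the equivariant surjection $\pi$ of Proposition~\ref{schur:surj}, using naturality of $\pi$ with respect to $V\hookrightarrow W$ to land inside $S_\lambda(V)$. You spell out the naturality/compatibility point and the abstract ``polarization commutes with equivariant quotients'' lemma more explicitly than the paper does, but the argument is the same.
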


\begin{proof}
Consider the surjection $\pi: S_{\mu}(W) \otimes S_{\nu}(W) \twoheadrightarrow S_{\lambda}(W)$ from Proposition~\ref{schur:surj}. It suffices to show $\pi(E \otimes F) \in \left<S_{\lambda}(V)\right>_{\GL(W)}$ for $E \in S_{\mu}(W)$ and $F \in S_{\nu}(W)$. Indeed by the Proposition~\ref{prop:tensor:pol}, we have $E \otimes F = \sum_i g_i \cdot (e_i \otimes f_i)$ for $g_i \in \GL(W), e_i \in S_{\mu}(V_1)$, and $f_i \in S_{\nu}(V_2)$. Thus we have $\pi(E \otimes F) = \sum_i g_i \cdot(\pi(e_i \otimes f_i)) \in \left< S_{\lambda}(V) \right>_{\GL(W)}$.
\end{proof}

\begin{lemma}
Let $\lambda \vdash d$ such that $l(\lambda) \leq n$. Fix $k \geq 2$. Then we can write $\lambda = \mu_1 \attach \mu_2 \attach \dots \attach \mu_s$ for some positive integer $s$ and non-empty partitions $\mu_i$ for $i = 1,\dots,s$ such that $ n(k-1) < |\mu_i| \leq kn$ for all $i < s$ and $|\mu_s| \leq kn$. Further, we have $l(\mu_i) \leq n$ for all $i$.
\end{lemma}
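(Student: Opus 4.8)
The plan is to realize the horizontal decomposition by cutting the Young diagram of $\lambda$ into blocks of consecutive \emph{columns}. Write $\lambda^\dag = (c_1 \geq c_2 \geq \dots \geq c_t)$, so $c_j$ is the height of the $j$-th column, $t = \lambda_1$, $\sum_{j=1}^t c_j = d$, and $c_1 = l(\lambda) \leq n$. For any choice of cut points $0 = a_0 < a_1 < \dots < a_s = t$, let $\mu_i$ be the partition whose columns are $c_{a_{i-1}+1} \geq \dots \geq c_{a_i}$; this list is weakly decreasing, hence a genuine partition, precisely because $\lambda^\dag$ is. One checks directly on rows that $\lambda = \mu_1 \attach \dots \attach \mu_s$, and since the tallest column of $\mu_i$ has height $c_{a_{i-1}+1} \leq c_1 \leq n$, this choice automatically forces $l(\mu_i) \leq n$ for every $i$. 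So the whole problem reduces to placing the cut points so that the size constraints hold, and the only structural fact needed is that each individual column contributes at most $n$ boxes.

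For this I would run the obvious greedy scan. If $d \leq kn$, simply take $s = 1$ and $\mu_1 = \lambda$; the condition on $i < s$ is then vacuous and $|\mu_s| = d \leq kn$. Otherwise, read the columns left to right, collecting them into a current block, and the instant the accumulated size first exceeds $n(k-1)$, declare that block finished and start a new one. Just before its last column was added the block had size $\leq n(k-1)$, and that column contributes at most $n$ boxes, so the finished block $\mu_i$ satisfies $n(k-1) < |\mu_i| \leq n(k-1) + n = kn$, exactly as required. Repeat on the columns not yet used; whatever remains at the end becomes $\mu_s$.

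It then remains to verify the edge cases, which is the only part needing any care. First, forming a non-final block never exhausts the columns: at that moment the number of boxes still available is $> kn$ (otherwise we would already have stopped), whereas the block we cut off has size $\leq kn$, so at least one column is left over; consequently the process terminates (the number of unused boxes strictly decreases at each cut) and it terminates with a genuinely non-empty last block $\mu_s$ of size $\leq kn$ by the stopping rule. Second, every non-final block has $|\mu_i| > n(k-1) \geq n \geq 1$, so all the $\mu_i$ are non-empty. (One tacitly assumes $n \geq 1$ and $\lambda \neq \emptyset$, i.e. $d \geq 1$, which is automatic in every situation where the lemma is used.) The heart of the matter — and there is no real obstacle beyond it — is simply that the admissible window $(n(k-1), kn]$ for a block size has width exactly $n$, at least the height of any single column, so one extra column can always be absorbed without overshooting.
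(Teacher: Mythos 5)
Your proof is correct and takes essentially the same approach as the paper: cut the Young diagram into blocks of consecutive columns, exploiting the fact that each column has height at most $n$, which is exactly the width of the admissible window $(n(k-1),kn]$. The only cosmetic difference is the dual greedy rule --- the paper takes the maximal prefix of columns with total size $\leq kn$, whereas you take the minimal prefix with total size $> n(k-1)$ (after first checking that the remaining total still exceeds $kn$); both land in the same window for the same reason.
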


\begin{proof}
Suppose $|\lambda| \leq kn$, then there is nothing to do. So, let us assume $|\lambda| > kn$. The lengths of the columns in $\lambda$ are given by the conjugate partition $\lambda^{\dag} = (\llambda_1,\llambda_2,\dots)$. Let $t$ be the smallest integer such that $\sum_{i = 1}^t \llambda_i > kn$. Then let $\mu_1$ be the first $(t-1)$ columns of $\lambda$, so that $\lambda = \mu_1 \attach \nu$, where $\nu$ is a partition. We have $kn \geq \sum_{i = 1}^{t-1} \llambda_i$ by minimality of $t$, and we have $\sum_{i = 1}^{t-1} \llambda_i > kn - n$, since $\llambda_t \leq n$ by hypothesis. Hence we have $n(k-1) < |\mu_1| \leq kn$.
 Now, proceed by induction on $\nu$.
\end{proof}

\begin{example}
Suppose $n = 4$ and $k = 3$ and $\lambda = (8,8,7,4)$, then the decomposition in the above lemma is best visualized by the following picture.
$$
\yng(8,8,7,4) = \yng(3,3,3,3) + \yng(3,3,3,1) + \yng(2,2,1).
$$

\end{example}

\begin{corollary}
Let $\lambda \vdash d$ such that $l(\lambda) \leq n$, and suppose $\kar (K) > kn$ with $k \geq 2$. Then for $V \hookrightarrow W$, with $\dim V  \geq n\lceil \frac{d}{n(k-1)}\rceil$,
we have $\left<S_{\lambda}(V) \right>_{\GL(W)} = S_{\lambda}(W)$
\end{corollary}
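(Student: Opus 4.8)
The plan is to split $\lambda$ into small concatenation blocks, handle each block by an irreducibility argument, and reassemble using Theorem~\ref{theo:pol:main}. By the preceding Lemma (applied with the given $k$), write $\lambda = \mu_1 \attach \mu_2 \attach \cdots \attach \mu_s$ with every $\mu_i$ non-empty, $l(\mu_i) \le n$, and $|\mu_i| \le kn$, while $|\mu_i| > n(k-1)$ for $i < s$. Summing sizes gives $d = \sum_{i=1}^s |\mu_i| > (s-1)n(k-1)$, so $s-1 < d/(n(k-1))$ and hence $s \le \lceil d/(n(k-1))\rceil$. Therefore $ns \le n\lceil d/(n(k-1))\rceil \le \dim V$, so I can fix subspaces $V_1,\ldots,V_s$ of $V$, each of dimension $n$, with $V_1 \oplus \cdots \oplus V_s \subseteq V \subseteq W$. (If $d=0$ the statement is vacuous, so assume $d \ge 1$; then $\dim V \ge n \ge l(\mu_i)$ and $\dim W \ge l(\mu_i)$, so every Schur module below is nonzero.)

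Next I would record the one-block case: if $\mu$ is a partition with $l(\mu) \le n$ and $|\mu| \le kn$, and $U \subseteq W$ has $\dim U \ge n$, then $\left<S_\mu(U)\right>_{\GL(W)} = S_\mu(W)$. Indeed $\kar(K) > kn \ge |\mu|$, so Corollary~\ref{cat:ss} (with $d = |\mu|$) shows $S_\mu(W)$ is an irreducible $\GL(W)$-module; since $\dim U \ge n \ge l(\mu)$ we have $S_\mu(U) \ne 0$, and an irreducible module is the $\GL(W)$-span of any nonzero subset. In particular $\left<S_{\mu_i}(V_i)\right>_{\GL(W)} = S_{\mu_i}(W)$ for each $i$.

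Finally I would run an induction on the number of blocks, proving: for $\tau = \mu_1 \attach \cdots \attach \mu_r$ with blocks as above, $\left<S_\tau(V_1 \oplus \cdots \oplus V_r)\right>_{\GL(W)} = S_\tau(W)$. The case $r=1$ is the previous paragraph. For the step, set $\tau' = \mu_1 \attach \cdots \attach \mu_{r-1}$ and $V' = V_1 \oplus \cdots \oplus V_{r-1}$; the induction hypothesis gives $\left<S_{\tau'}(V')\right>_{\GL(W)} = S_{\tau'}(W)$ and the one-block case gives $\left<S_{\mu_r}(V_r)\right>_{\GL(W)} = S_{\mu_r}(W)$, so Theorem~\ref{theo:pol:main} (with $\mu = \tau'$, $\nu = \mu_r$, $V_1 = V'$, $V_2 = V_r$) yields $\left<S_\tau(V' \oplus V_r)\right>_{\GL(W)} = S_\tau(W)$. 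Taking $\tau = \lambda$ and $r = s$, and using $V_1 \oplus \cdots \oplus V_s \subseteq V$, we obtain $S_\lambda(W) = \left<S_\lambda(V_1 \oplus \cdots \oplus V_s)\right>_{\GL(W)} \subseteq \left<S_\lambda(V)\right>_{\GL(W)} \subseteq S_\lambda(W)$, forcing equality.

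There is no genuine obstacle here: the argument is an assembly of the Lemma, Corollary~\ref{cat:ss}, and Theorem~\ref{theo:pol:main}. The only point requiring care is the counting bound $s \le \lceil d/(n(k-1))\rceil$, which is precisely what makes $\dim V$ in the hypothesis large enough to supply one $n$-dimensional slice per block; using $\dim V_i = l(\mu_i)$ rather than $n$ would mildly sharpen the constant but obscure the clean statement.
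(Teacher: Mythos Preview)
Your proof is correct and follows essentially the same approach as the paper: decompose $\lambda$ via the preceding Lemma, bound $s \le \lceil d/(n(k-1))\rceil$ so that $V$ contains $s$ independent $n$-dimensional slices, use Corollary~\ref{cat:ss} to handle each block by irreducibility, and then assemble with Theorem~\ref{theo:pol:main}. The only cosmetic differences are that you derive the bound on $s$ directly rather than by contradiction, and you spell out the induction on the number of blocks where the paper simply says ``repeated application of Theorem~\ref{theo:pol:main}.''
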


\begin{proof}
Write $\lambda = \mu_1 \attach \mu_2 \attach \dots \attach \mu_s$ be the decomposition from the previous lemma. If $s \geq \lceil\frac{d}{n(k-1)} \rceil + 1$, then since $|\mu_i| > n(k-1)$ for $i < s$, and $|\mu_s| > 0$, we have $|\lambda| = \sum_i |\mu_i| > (s-1)n(k-1) = \lceil\frac{d}{n(k-1)} \rceil n(k-1) \geq d$, which is a contradiction. Hence we have $s \leq \lceil\frac{d}{n(k-1)} \rceil$, and consequently, we have $sn \leq n\lceil \frac{d}{n(k-1)}\rceil \leq \dim V$. This allows us to choose subspaces $V_1,\dots,V_s \subseteq W$ such that $\dim V_i = n$ and $V_1 \oplus V_2 \oplus \dots \oplus V_s \subseteq V$. 

First observe that $S_{\mu_i}(V_i)$ is non-zero as $l(\mu_i) \leq \dim V_i = n$. Next, we see from Corollary~\ref{cat:ss} that $S_{\mu_i}(W)$ is an irreducible $\GL(W)$ representation as $|\mu_i| \leq kn < \kar(K)$. Hence, we have $\left<S_{\mu_i}(V_i)\right>_{\GL(W)} = S_{\mu_i}(W)$. The result follows by a repeated application of Theorem~\ref{theo:pol:main}.
\end{proof}

\begin{corollary} \label{cor:bdd.pol}
Suppose $L:\Vect \rightarrow \Vect$ is a functor such that it has a filtration (of functors) whose subquotients are of the form $S_{\lambda}$ with $\lambda \vdash d$ and $l(\lambda) \leq n$. Suppose $\kar (K)  > kn$ with $k \geq 2$, and let $V \in \Vect$ such that $\dim V \geq n\lceil \frac{d}{n(k-1)}\rceil$. Then for $V \hookrightarrow W$, we have $\left< L(V) \right>_{\GL(W)} = L(W)$
\end{corollary}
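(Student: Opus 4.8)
The plan is to treat this as the functorial amplification of Theorem~\ref{theo:pol:main} and prove it by induction on the length $r$ of the given filtration $0 = L_0 \subseteq L_1 \subseteq \dots \subseteq L_r = L$, where $L_i / L_{i-1} \cong S_{\lambda^{(i)}}$ for partitions $\lambda^{(i)} \vdash d$ with $l(\lambda^{(i)}) \leq n$. For $r \le 1$ there is nothing to prove beyond quoting the preceding corollary applied to the single Schur functor $S_{\lambda^{(1)}}$, whose hypotheses $\lambda \vdash d$, $l(\lambda)\le n$, $\kar(K) > kn$, $\dim V \ge n\lceil d/(n(k-1))\rceil$ are exactly those assumed here. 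I would also record that the truncated filtration $L_0 \subseteq \dots \subseteq L_{r-1}$ exhibits $L_{r-1}$ as another functor of the kind allowed in the statement, so the induction hypothesis applies to it.

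For the inductive step I would set $\lambda := \lambda^{(r)}$ and let $\pi\colon L \Rightarrow S_\lambda$ be the quotient natural transformation, with kernel functor $L_{r-1}$. Since a filtration of functors is formed objectwise, evaluating at $W$ produces a short exact sequence $0 \to L_{r-1}(W) \to L(W) \to S_\lambda(W) \to 0$ of $\GL(W)$-representations in which the surjection $\pi_W \colon L(W) \to S_\lambda(W)$ is equivariant (being a component of a natural transformation), and evaluating at the inclusion $V \hookrightarrow W$ and chasing the resulting commutative square identifies $\pi_W(L(V))$ with $S_\lambda(V) \subseteq S_\lambda(W)$. Then I would run the lifting argument from the proof of Theorem~\ref{theo:pol:main}: given $x \in L(W)$, the base case gives $\pi_W(x) = \sum_i g_i s_i$ with $g_i \in \GL(W)$ and $s_i \in S_\lambda(V)$; choosing lifts $\tilde s_i \in L(V)$ of the $s_i$ and using equivariance of $\pi_W$, the difference $x - \sum_i g_i \tilde s_i$ lies in $\ker \pi_W = L_{r-1}(W)$, which by the induction hypothesis equals $\left<L_{r-1}(V)\right>_{\GL(W)} \subseteq \left<L(V)\right>_{\GL(W)}$; since $\sum_i g_i \tilde s_i \in \left<L(V)\right>_{\GL(W)}$ trivially, we conclude $x \in \left<L(V)\right>_{\GL(W)}$, and $L(W) = \left<L(V)\right>_{\GL(W)}$ as $x$ was arbitrary.

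I do not expect a real obstacle here; the only points that need a moment's care are, first, that ``filtration of functors'' really does evaluate to exact sequences of representations with equivariant maps, which is where one uses that a subquotient functor, being defined objectwise, is exact by construction (Schur functors themselves are not), and second, that $\left<-\right>_{\GL(W)}$ of a subset of a $\GL(W)$-subrepresentation may be computed either inside that subrepresentation or inside the ambient $L(W)$ with no change --- immediate from the description of $\left<S\right>_{\GL(W)}$ as the linear span of the $\GL(W)$-orbits of the elements of $S$. Neither of these touches the characteristic or dimension hypotheses, which enter the argument only through the invocation of the preceding corollary in the base case.
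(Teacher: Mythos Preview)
Your argument is correct and is exactly the filtration-and-lift argument the paper intends: the corollary is stated without proof, as the routine extension of the preceding single--Schur-functor corollary to a finite filtration, and your induction on the length of the filtration together with the equivariant lifting step is the standard way to spell this out.
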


\section{Good filtrations and the Littlewood--Richardson rule} \label{Sec:good1}
The theory of good filtrations is very powerful in positive characteristic. A comprehensive introduction to this theory can be found in \cite{Donkin2} (see also \cite{Don, Don2, Don3, Mat}). We also refer the reader to \cite{Domokos2,Zubkov} for an exposition with a view of using them for invariant rings coming from quivers including matrix invariants and semi-invariants.
 
 Let $G$ be a connected reductive algebraic group over an algebraically closed field $K$. Let $B$ be a choice of Borel subgroup of $G$ and let $T \subset B$ be a maximal torus of $G$. Let $\Lambda^+$ denote the set of dominant integral weights. For each $\lambda \in \Lambda^+$, one can associate a one-dimensional representation of $B$. The corresponding induced G-module is called a dual Weyl module, and denoted $\nabla(\lambda)$. Note that for $\GL_n$ and $\SL_n$, Schur modules are dual Weyl modules. There is a partial order $\prec$ on $\bigwedge^+$ defined by $\lambda \prec \mu$ if $\mu - \lambda$ is a non-negative sum of roots.

\begin{definition}
A $G$-module $V$ is called a good $G$-module if it has a filtration $0 \subseteq V_0 \subseteq V_1 \subseteq~\dots$ such that $\bigcup\limits_i V_i = V$ and each subquotient $V_i/V_{i-1}$ is a dual Weyl module. Such a filtration is called a good filtration. 
\end{definition} 

The dual Weyl modules occuring as subquotients (including multiplicities) are independent of the choice of filtration. 

\begin{remark} \label{defined over Z}
For a split reductive group defined over $\Z$, Weyl modules and dual Weyl modules are defined over $\Z$, see \cite{Jantzen} or \cite{Kulkarni}. More precisely, for $\lambda \in \Lambda^+$, there is a free $(G-\Z)$ module $\nabla_\Z(\lambda)$ such that $\nabla_\Z(\lambda) \otimes_\Z K$ is the dual Weyl module $\nabla(\lambda)$ for $G_K$ for any algebraically closed field $K$. So, we call a free $(G-\Z)$-module a good $G$ module if it has a filtration by the dual Weyl modules $\nabla_\Z(\lambda)$'s.  Moreover, the characters of dual Weyl modules are given by the Weyl character formula and in particular independent of the characteristic.
\end{remark}

The following lemma is straightforward.

\begin{lemma} \label{semi.good}
Suppose a $G$-module $V$ has a filtration $0 \subseteq V_0 \subseteq V_1 \subseteq~\dots$ such that $\bigcup\limits_i V_i = V$ and each subquotient $V_i/V_{i-1}$ is a good $G$-module, then $V$ is a good $G$-module.
\end{lemma}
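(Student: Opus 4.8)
The plan is to produce a good filtration of $V$ by \emph{refining} the given filtration: inside each layer $V_{i-1} \subseteq V_i$ we splice in the good filtration of the subquotient $V_i/V_{i-1}$, and then concatenate all the refined layers in order.

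Concretely, write $W_i := V_i/V_{i-1}$ (with the convention $V_{-1} = 0$, so that $W_0 = V_0$) and let $\pi_i \colon V_i \twoheadrightarrow W_i$ be the quotient map. By hypothesis each $W_i$ is a good $G$-module, so it carries a good filtration $0 = W_i^{(0)} \subseteq W_i^{(1)} \subseteq \cdots$ with $\bigcup_j W_i^{(j)} = W_i$ and each successive quotient $W_i^{(j)}/W_i^{(j-1)}$ a dual Weyl module $\nabla(\lambda_{i,j})$. Pulling this filtration back along $\pi_i$, set $V_i^{(j)} := \pi_i^{-1}\bigl(W_i^{(j)}\bigr)$. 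These are $G$-submodules of $V_i$ satisfying $V_i^{(0)} = \ker \pi_i = V_{i-1}$ and $\bigcup_j V_i^{(j)} = V_i$, and the standard correspondence/third isomorphism theorem gives $G$-equivariant isomorphisms $V_i^{(j)}/V_i^{(j-1)} \cong W_i^{(j)}/W_i^{(j-1)} = \nabla(\lambda_{i,j})$.

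It then remains to concatenate: the chain
$$0 = V_0^{(0)} \subseteq V_0^{(1)} \subseteq \cdots \subseteq V_0 = V_1^{(0)} \subseteq V_1^{(1)} \subseteq \cdots \subseteq V_1 = V_2^{(0)} \subseteq \cdots$$
is an increasing filtration of $V$ (re-indexed by a totally ordered index set; by $\Z_{\geq 0}$ when each $W_i$ admits a finite good filtration, e.g.\ when the $W_i$ are finite-dimensional) whose union is $\bigcup_i V_i = V$ and all of whose successive subquotients are dual Weyl modules. Hence $V$ is a good $G$-module, as claimed.

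I do not expect a genuine obstacle; the argument is a routine refinement-and-concatenation of filtrations, together with the elementary fact that pulling back a filtration along a surjection preserves subquotients. The only bookkeeping point worth a remark is the re-indexing of the concatenated filtration when some of the intermediate good filtrations are infinite — this is handled exactly as in the standard references on good filtrations (e.g.\ \cite{Donkin2,Jantzen}) and is immaterial for the applications in this paper, where the modules in question are finite-dimensional.
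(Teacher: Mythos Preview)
Your proof is correct and is exactly the straightforward refinement-and-concatenation argument the paper has in mind; indeed, the paper does not write out a proof at all, merely declaring the lemma ``straightforward.'' Your write-up simply makes explicit what the paper leaves to the reader.
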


Let us recall some well known properties of good $G$-modules. They can be found in the standard references mentioned above.

\begin{lemma} \label{good-prop}
Let $V$ and $W$ be good $G$-modules.
\begin{enumerate}
\item If $V \subseteq W$, then $W/V$ is a good $G$-module.
\item $V \otimes W$ is a good $G$-module.
\item $\dim(V^G)$ is the multiplicity of the trivial module in any good filtration for $G$.
\end{enumerate}
\end{lemma}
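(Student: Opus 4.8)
The plan is to deduce all three parts from the cohomological characterisation of good modules (due to Donkin; see \cite{Donkin2} or \cite{Jantzen}): writing $\Delta(\mu)$ for the Weyl module of highest weight $\mu$, a $G$-module $M$ is good if and only if $\operatorname{Ext}^1_G(\Delta(\mu),M)=0$ for every $\mu\in\Lambda^+$, and in that case $\operatorname{Ext}^i_G(\Delta(\mu),M)=0$ for all $i\geq 1$. For a module that is merely a union of finite-dimensional submodules one applies this after passing to the directed colimit, which is harmless since each $\Delta(\mu)$ is finite-dimensional. I would record this criterion once and then use it throughout.

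For part (1), apply $\operatorname{Hom}_G(\Delta(\mu),-)$ to the short exact sequence $0\to V\to W\to W/V\to 0$; the relevant piece of the long exact sequence is $\operatorname{Ext}^1_G(\Delta(\mu),W)\to \operatorname{Ext}^1_G(\Delta(\mu),W/V)\to \operatorname{Ext}^2_G(\Delta(\mu),V)$, and both outer terms vanish — the first because $W$ is good, the second because $V$ is good and higher $\operatorname{Ext}$ against Weyl modules vanishes — so $W/V$ is good. For part (3), fix a good filtration $0=V_0\subseteq V_1\subseteq\cdots$ with $V_i/V_{i-1}\cong\nabla(\lambda_i)$. Since $\nabla(0)=K=\Delta(0)$, the criterion gives $\operatorname{Ext}^1_G(K,M)=0$ for every good $M$, so applying the left-exact functor $(-)^G=\operatorname{Hom}_G(K,-)$ to $0\to V_{i-1}\to V_i\to\nabla(\lambda_i)\to 0$ produces short exact sequences $0\to V_{i-1}^G\to V_i^G\to\nabla(\lambda_i)^G\to 0$. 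As $\operatorname{soc}\nabla(\lambda)=L(\lambda)$, the space $\nabla(\lambda)^G=\operatorname{Hom}_G(L(0),\nabla(\lambda))$ is one-dimensional when $\lambda=0$ and zero otherwise; summing dimensions (and passing to the colimit if the filtration is infinite) shows $\dim V^G$ equals the number of $i$ with $\lambda_i=0$, i.e. the multiplicity of the trivial module in the filtration, which is filtration-independent by the remark just above the lemma.

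The one genuinely hard input is part (2): that $V\otimes W$ is good whenever $V$ and $W$ are. There is no elementary proof — this is a theorem of Donkin in most characteristics and of Mathieu in full generality (see \cite{Mat}, and also \cite{Donkin2}), whose proof relies on Frobenius splitting / the representation theory of quantum groups at roots of unity. Accordingly I would simply cite it rather than attempt to reprove it; the remaining two parts are then purely formal consequences of the $\operatorname{Ext}$-vanishing criterion.
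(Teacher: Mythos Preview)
Your proposal is correct. The paper itself offers no proof of this lemma at all: it simply records the three properties as ``well known'' and points to the standard references \cite{Donkin2,Don,Don2,Don3,Mat}. Your write-up goes further than the paper by actually sketching the arguments, and the route you take --- the $\operatorname{Ext}^1_G(\Delta(\mu),-)$-vanishing criterion for goodness, the long exact sequence for part~(1), exactness of $(-)^G$ on short exact sequences with good kernel for part~(3), and a citation to Donkin--Mathieu for part~(2) --- is exactly the standard treatment one finds in those references (e.g.\ \cite[II.4.16]{Jantzen} and \cite{Donkin2}). So there is nothing to compare: your argument is the fleshed-out version of what the paper merely cites.
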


The following result is \cite[Proposition~3.2.6]{Donkin2}.

\begin{lemma}
Suppose $V$ is a good $G$-module. Suppose it has a good filtration $0 = V_0 \subseteq V_1 \subseteq \dots \subseteq V_n = V$ with $V_i/V_{i-1} = \nabla(\lambda_i)$. Let $\pi$ be a permutation of $\{1,2,\dots,n\}$ such that whenever $\lambda_{\pi(i)} \succ \lambda_{\pi(j)}$, we have $\pi(i) > \pi(j)$. Then there is a good filtration $0 = V'_0 \subseteq V'_1 \subseteq \dots \subseteq V'_n = V$ such that $V_i/V_{i-1} = \nabla(\lambda_{\pi(i)})$.
\end{lemma}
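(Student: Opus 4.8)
The plan is to realize the reordering as a sequence of transpositions of adjacent sections, each effected by the vanishing of $\mathrm{Ext}^1$ between dual Weyl modules. The input I would use is the standard fact (a general feature of highest weight categories; see the references on good filtrations above) that $\mathrm{Ext}^1_G(\nabla(\lambda),\nabla(\mu)) = 0$ unless $\lambda \succ \mu$. If a self-contained argument is wanted, this follows because every composition factor of $\nabla(\lambda)$ other than its socle $L(\lambda)$ has highest weight $\prec\lambda$, together with $\mathrm{Ext}^1_G(L(\kappa),\nabla(\mu)) = 0$ unless $\kappa \succ \mu$, by induction on the composition length of $\nabla(\lambda)/L(\lambda)$.

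From this I would extract the basic move: if $0 = V_0 \subseteq \dots \subseteq V_n = V$ is a good filtration with $V_i/V_{i-1} = \nabla(\lambda_i)$ and $\lambda_{a+1}\not\succ\lambda_a$ for some $a$, then the extension $0 \to \nabla(\lambda_a) \to V_{a+1}/V_{a-1} \to \nabla(\lambda_{a+1}) \to 0$, being classified by an element of $\mathrm{Ext}^1_G(\nabla(\lambda_{a+1}),\nabla(\lambda_a)) = 0$, splits; replacing $V_a$ by the preimage in $V_{a+1}$ of the $\nabla(\lambda_{a+1})$-summand produces a good filtration with the $a$-th and $(a+1)$-th sections exchanged. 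Crucially, the availability of this move depends only on the current ordering of the weights $\lambda_i$, not on the submodules chosen.

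I would then finish by induction on $n$. The hypothesis on $\pi$ guarantees in particular that $\lambda_{\pi(n)}$ is $\prec$-maximal among $\lambda_1,\dots,\lambda_n$ (nothing strictly dominates the weight of the section that is to be placed on top). Hence every adjacent swap needed to float the section originally in position $\pi(n)$ up to the top of the filtration is legal by the previous paragraph, and carrying these swaps out yields a good filtration whose top section is $\nabla(\lambda_{\pi(n)})$ and whose lower $n-1$ sections are the $\nabla(\lambda_k)$, $k\ne\pi(n)$, in their original relative order. That lower part is a good filtration of the good module $V_{n-1}$, and after the order-preserving relabelling $\{1,\dots,n\}\setminus\{\pi(n)\}\cong\{1,\dots,n-1\}$ the permutation inherited from $\pi$ again satisfies the hypothesis (obtained by restricting the conditions on $\pi$ to pairs $i,j<n$); the inductive hypothesis reorders it to have sections $\nabla(\lambda_{\pi(1)}),\dots,\nabla(\lambda_{\pi(n-1)})$, and splicing $V_{n-1}\subseteq V$ on top completes the desired filtration.

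The argument is routine once the $\mathrm{Ext}^1$-vanishing is in hand; the points that need care are getting the direction of the dominance order correct in that vanishing statement, and the bookkeeping in the induction — checking that each swap remains legal after the previous replacement (which is why it matters that legality depends only on the ordering of weights), and that the restricted permutation still obeys the hypothesis. I do not expect a serious conceptual obstacle; this is in essence Donkin's argument.
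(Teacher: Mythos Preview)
The paper does not supply its own proof of this lemma; it simply records it as \cite[Proposition~3.2.6]{Donkin2}. Your argument---reduce to adjacent transpositions via the vanishing of $\mathrm{Ext}^1_G(\nabla(\lambda),\nabla(\mu))$ when $\lambda\not\succ\mu$, then float a $\prec$-maximal section to the top and induct on $n$---is correct and is exactly Donkin's proof, as you yourself note. (One side remark: the printed hypothesis ``$\pi(i)>\pi(j)$'' appears to be a slip for ``$i>j$''; you have tacitly used the latter, which is the intended reading and is what makes $\lambda_{\pi(n)}$ maximal.)
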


The following result already evident in the proof of the universal form of the Littlewood--Richardson rule (see \cite{Boffi}).  However, we will provide a sketch of the proof. Let us note that the dominance order on partitions agrees with the partial order $\prec$ for $G = \GL(V)$.

\begin{corollary} \label{L-R}
Suppose $\lambda,\mu$ are two partitions, and $V$ a vector space over an algebraically closed field $K$. Then we have a surjection $\zeta: S_\lambda(V) \otimes S_\mu(V) \twoheadrightarrow S_{\lambda \attach \mu} (V)$ such that $\ker(\zeta)$ has a filtration whose subquotients are Schur modules of the form $S_\nu(V)$ with $\nu \prec \lambda \attach \mu$.
\end{corollary}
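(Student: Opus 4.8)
The plan is to realize $S_{\lambda \attach \mu}(V)$ as a dual Weyl module $\nabla(\lambda \attach \mu)$ for $\GL(V)$ and exploit the good filtration machinery just developed, rather than argue combinatorially with tableaux. First I would recall that for $G = \GL(V)$ the Schur module $S_\nu(V)$ is precisely the dual Weyl module $\nabla(\nu)$, and that by Lemma~\ref{good-prop}(2) the tensor product $S_\lambda(V) \otimes S_\mu(V)$ is a good $G$-module. So it has a good filtration whose subquotients are dual Weyl modules $\nabla(\nu)$, i.e.\ Schur modules $S_\nu(V)$, and each such $\nu$ satisfies $\nu \preceq \lambda \attach \mu$: indeed the highest weight appearing in $S_\lambda(V)\otimes S_\mu(V)$ is $\lambda + \mu = \lambda \attach \mu$ after reordering columns, which in the dominance order is the largest weight, and all weights of the tensor product lie below it in the root order $\prec$. (Concretely, the characters are given by the Littlewood--Richardson rule, independent of characteristic by Remark~\ref{defined over Z}, and the LR coefficient $c^{\lambda\attach\mu}_{\lambda\mu} = 1$; moreover any $\nu$ with $c^\nu_{\lambda\mu} \neq 0$ satisfies $\nu \preceq \lambda\attach\mu$.)

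Next I would pin down the multiplicity of the top piece. Since the character of $S_\lambda(V)\otimes S_\mu(V)$ is $s_\lambda \cdot s_\mu = \sum_\nu c^\nu_{\lambda\mu} s_\nu$ and $c^{\lambda\attach\mu}_{\lambda\mu}=1$, the dual Weyl module $\nabla(\lambda\attach\mu) = S_{\lambda\attach\mu}(V)$ occurs exactly once in any good filtration of $S_\lambda(V)\otimes S_\mu(V)$. Using the reordering lemma (the one quoted from \cite[Proposition~3.2.6]{Donkin2}) I can arrange a good filtration $0 = M_0 \subseteq M_1 \subseteq \cdots \subseteq M_r = S_\lambda(V)\otimes S_\mu(V)$ in which the unique copy of $S_{\lambda\attach\mu}(V)$ sits at the top, i.e.\ $M_r / M_{r-1} \cong S_{\lambda\attach\mu}(V)$, because $\lambda\attach\mu$ is $\preceq$-maximal among the weights occurring and so can be placed last. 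This gives a $\GL(V)$-equivariant surjection
$$
\zeta : S_\lambda(V)\otimes S_\mu(V) \twoheadrightarrow S_{\lambda\attach\mu}(V)
$$
with $\ker(\zeta) = M_{r-1}$, and $M_{r-1}$ inherits the good filtration $0 = M_0 \subseteq \cdots \subseteq M_{r-1}$ whose subquotients are the remaining $S_\nu(V)$, all with $\nu \prec \lambda\attach\mu$ (strict, since the only $\nu = \lambda\attach\mu$ term was split off). One should also check this $\zeta$ is compatible with (a scalar multiple of) the combinatorial surjection $\pi$ of Proposition~\ref{schur:surj}; since $\operatorname{Hom}_{\GL(V)}(S_\lambda(V)\otimes S_\mu(V), S_{\lambda\attach\mu}(V))$ is one-dimensional (the multiplicity is $1$ and $S_{\lambda\attach\mu}(V)$ has simple socle as a dual Weyl module), any nonzero such map works, so $\zeta$ can be taken to be $\pi$ up to scalar.

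The main obstacle I expect is the interface between the universal/$\Z$-form statements and the fixed-field statement: one wants the good filtration of $S_\lambda(V)\otimes S_\mu(V)$ and the identification of its subquotients (with their $\prec$-constraints and the multiplicity-one claim) to hold over an arbitrary algebraically closed $K$, uniformly. This is exactly what the ``universal form of the Littlewood--Richardson rule'' of \cite{Boffi} provides — a filtration defined over $\Z$ whose associated graded is a sum of $\Z$-forms $\nabla_\Z(\nu)$ — so the cleanest route is to cite that directly and then base change to $K$ via Remark~\ref{defined over Z}, noting that the characters (hence which $\nu$ appear and with what multiplicity) are characteristic-independent. A secondary, purely bookkeeping point is verifying the dominance/root-order inequality $\nu \preceq \lambda\attach\mu$ for all Littlewood--Richardson constituents and that the only $\nu$ equal to $\lambda\attach\mu$ comes with coefficient $1$; both are standard facts about LR coefficients (the skew shape $(\lambda\attach\mu)/\mu$ contains $\lambda$, forcing the ``maximal filling''), so I would state them with a one-line justification rather than reprove them. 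Hence the proof is: quote \cite{Boffi} for the $\Z$-form good filtration of $S_\lambda\otimes S_\mu$, base-change, reorder so the unique top constituent $S_{\lambda\attach\mu}(V)$ is the quotient, and read off $\zeta$ and the filtration of $\ker\zeta$.
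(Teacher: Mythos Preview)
Your proposal is correct and follows essentially the same approach as the paper: both argue that $S_\lambda(V)\otimes S_\mu(V)$ is a good $\GL(V)$-module, identify its good-filtration subquotients via the characteristic-independent character computation (i.e.\ the Littlewood--Richardson rule), note that $S_{\lambda\attach\mu}(V)$ occurs with multiplicity one and all other constituents satisfy $\nu\prec\lambda\attach\mu$, and then invoke the reordering lemma from \cite{Donkin2} to place $S_{\lambda\attach\mu}(V)$ at the top of the filtration and read off $\zeta$. Your additional paragraph on the one-dimensionality of the Hom space and compatibility with the combinatorial $\pi$ of Proposition~\ref{schur:surj} is not needed for the statement as written (any surjection will do), and the paper omits it; likewise your suggestion to cite \cite{Boffi} and base-change is exactly what the paper acknowledges as the origin of the result before giving the same sketch you give.
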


\begin{proof} Since $S_\lambda(V)$ and $S_\mu(V)$ are good $\GL(V)$ modules, so is $S_\lambda(V) \otimes S_\mu(V)$ by Lemma~\ref{good-prop}. To understand the multiplicities of dual Weyl modules in any good $\GL(V)$-filtration for $S_\lambda(V) \otimes S_\mu(V)$, it suffices to write its character as a sum of characters of dual Weyl modules. This is a computation that is independent of characteristic as the dual Weyl modules have the same formal character in any characteristic, see Remark~\ref{defined over Z}.

In characteristic zero, the celebrated Littlewood--Richardson rule describes how $S_\lambda(V) \otimes S_\mu(V)$ decomposes as a sum of Schur modules. Hence, in any characteristic, the Littlewood--Richardson rule describes the subquotients in any good filtration of $S_\lambda(V) \otimes S_\mu(V)$. The Schur module $S_{\lambda + \mu}(V)$ occurs with multiplicity one, and all others are of the form $S_\nu(V)$ with $\nu \prec \lambda + \mu$. 

Using the above lemma, we can get a good filtration $0 = V_0 \subseteq \dots \subseteq V_k = S_\lambda(V) \otimes S_\mu(V)$ such that $V_k/V_{k-1} = S_{\lambda + \mu}(V)$. Interpreting this as a map $\zeta: V_k \twoheadrightarrow S_{\lambda + \mu}(V)$ whose kernel is $V_{k-1}$, we get the required conclusion.

\end{proof}

\section{Good filtrations for invariant rings} \label{Sec:good2}

For this section, let us assume $G$ is a connected reductive group over an algebraically closed field $K$ whose characteristic is $p > 0$, and $V$ is an $n$-dimensional good $G$-module. The following lemma is \cite[Lemma~2]{Zubkov}.

\begin{lemma} 
The module $\bigwedge^i(V)$ is a good $G$-module for $i < p$.
\end{lemma}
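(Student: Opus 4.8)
The plan is to realize $\bigwedge^i(V)$ as a direct summand of the tensor power $V^{\otimes i}$ inside the category of $G$-modules, and then to quote the closure of the class of good modules under direct summands. First I would note that by iterating part (2) of Lemma~\ref{good-prop}, the tensor power $V^{\otimes i}$ is a good $G$-module for every $i$ (with no hypothesis on $p$). The single place where $i<p$ enters is the construction of the antisymmetrizer: since $i!$ is invertible in $K$, the element $\epsilon=\frac{1}{i!}\sum_{\sigma\in S_i}\operatorname{sgn}(\sigma)\,\sigma$ of the group algebra of the symmetric group $S_i$ is an idempotent, and as $S_i$ acts on $V^{\otimes i}$ by permuting tensor factors in a manner commuting with $G$, the element $\epsilon$ induces a $G$-module endomorphism of $V^{\otimes i}$ with $\epsilon^2=\epsilon$.

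Next I would pin down the image of $\epsilon$. The canonical projection $q\colon V^{\otimes i}\twoheadrightarrow\bigwedge^i(V)$ is $G$-equivariant, and the $G$-equivariant map $a\colon\bigwedge^i(V)\to V^{\otimes i}$ sending $v_1\wedge\dots\wedge v_i$ to $\sum_{\sigma\in S_i}\operatorname{sgn}(\sigma)\,v_{\sigma(1)}\otimes\dots\otimes v_{\sigma(i)}$ satisfies $q\circ a=i!\cdot\mathrm{id}$; hence for $i<p$ the map $\tfrac{1}{i!}a$ is a $G$-equivariant section of $q$. Therefore $\operatorname{im}(\epsilon)$ is a $G$-submodule of $V^{\otimes i}$ isomorphic to $\bigwedge^i(V)$, and one obtains a $G$-module decomposition $V^{\otimes i}=\operatorname{im}(\epsilon)\oplus\ker(\epsilon)$. (In fact, when $i<p$ Schur--Weyl duality splits $V^{\otimes i}$ as a $G$-module into a direct sum of Schur modules, but only the exterior-power summand is needed here.)

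To conclude I would invoke the fact that a direct summand of a $G$-module admitting a good filtration again admits one. The quickest route is the cohomological criterion (see \cite{Donkin2}): a finite-dimensional $G$-module $M$ is good if and only if $\operatorname{Ext}^1_G(\Delta(\lambda),M)=0$ for every dominant weight $\lambda$, where $\Delta(\lambda)$ is the Weyl module of highest weight $\lambda$. Since $\operatorname{Ext}^1_G(\Delta(\lambda),-)$ is additive, the vanishing of $\operatorname{Ext}^1_G(\Delta(\lambda),V^{\otimes i})$ for all $\lambda$ forces the vanishing of $\operatorname{Ext}^1_G(\Delta(\lambda),\operatorname{im}(\epsilon))$, so $\operatorname{im}(\epsilon)\cong\bigwedge^i(V)$ is good.

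I do not expect a real obstacle here: beyond Lemma~\ref{good-prop}, the only genuine inputs are the cohomological characterization of good filtrations and the elementary remark that $i<p$ makes $\epsilon$ a true idempotent. It is worth emphasizing that $i<p$ is essential — for $i\ge p$ the operator $\epsilon$ is unavailable, $V^{\otimes i}$ need not split accordingly, and the conclusion can fail; the very same argument with the symmetrizer in place of $\epsilon$ shows moreover that $\Sym^i(V)$ is a good $G$-module for $i<p$.
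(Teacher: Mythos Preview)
Your argument is correct: $V^{\otimes i}$ is good by iterating Lemma~\ref{good-prop}(2), the antisymmetrizer is a $G$-equivariant idempotent once $i!$ is invertible, and the cohomological criterion for good filtrations (vanishing of $\operatorname{Ext}^1_G(\Delta(\lambda),-)$) passes to direct summands. This is exactly the standard way to prove the statement.

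There is, however, nothing to compare against in the paper itself: the authors do not supply a proof but simply attribute the lemma to \cite[Lemma~2]{Zubkov}. Zubkov's original argument is essentially the one you give --- split off $\bigwedge^i(V)$ from $V^{\otimes i}$ using the alternating idempotent and appeal to closure of good modules under direct summands --- so your proposal is in line with the cited source. Your closing remarks (that $i<p$ is essential, and that the symmetrizer gives the analogous statement for $\Sym^i$) are accurate and worth keeping.
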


We can use the above lemma to prove the more general statement.

\begin{lemma}
If $p > n$, then $S_\lambda(V)$ is a good $G$-module for all partitions $\lambda$. 
\end{lemma}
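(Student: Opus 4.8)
The idea is to reduce the general Schur module $S_\lambda(V)$ to the exterior powers $\bigwedge^i(V)$, for which the previous lemma (Zubkov) already gives goodness when $i < p$, and then invoke the closure properties of good modules. The key combinatorial input is that a Schur functor $S_\lambda$ is built out of the exterior powers indexed by the columns of $\lambda$: writing $\lambda^\dag = (\lambda^\dag_1, \lambda^\dag_2, \dots)$ for the conjugate partition, the columns of $\lambda$ have lengths $\lambda^\dag_1 \geq \lambda^\dag_2 \geq \dots$, each of which is at most $l(\lambda)$. If we knew that every column had length $< p$, then each $\bigwedge^{\lambda^\dag_i}(V)$ would be a good $G$-module by the previous lemma, their tensor product would be good by Lemma~\ref{good-prop}(2), and we could hope to cut down to $S_\lambda(V)$ itself.

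\textbf{Main steps.} First I would observe that $l(\lambda) \le n$ is the only case that matters: if $l(\lambda) > n = \dim V$, then $S_\lambda(V) = 0$, which is trivially good. So assume $l(\lambda) \le n < p$; then every column of $\lambda$ has length at most $n < p$, so each $\bigwedge^{\lambda^\dag_j}(V)$ is good. Second, I would use Proposition~\ref{schur:surj} — or better, its refinement Corollary~\ref{L-R} applied iteratively — to get a surjection
$$
\bigotimes_{j} \bigwedge\nolimits^{\lambda^\dag_j}(V) \;=\; \bigotimes_j S_{(1^{\lambda^\dag_j})}(V) \;\twoheadrightarrow\; S_{(1^{\lambda^\dag_1}) \attach (1^{\lambda^\dag_2}) \attach \cdots}(V) \;=\; S_\lambda(V),
$$
since horizontally concatenating single columns of weakly decreasing lengths reproduces $\lambda$. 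Third, I would argue that the kernel of this surjection is itself a good $G$-module: the left-hand side is good (tensor product of goods), and by Corollary~\ref{L-R} the kernel at each stage of the iterated concatenation has a filtration by Schur modules $S_\nu(V)$ with $\nu$ strictly smaller in dominance order and $l(\nu) \le n$. These $S_\nu(V)$ are good by induction — and here one should set up an induction, say on $|\lambda|$ with a secondary induction on the dominance order (or simply a downward induction on $\lambda$ within fixed size), the base case being a single column, which is Zubkov's lemma. Given that the kernel is good, Lemma~\ref{good-prop}(1) applied to $\ker(\zeta) \subseteq \bigotimes_j \bigwedge^{\lambda^\dag_j}(V)$ shows the quotient $S_\lambda(V)$ is good, completing the induction.

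\textbf{The main obstacle.} The delicate point is organizing the induction so that it is not circular: Corollary~\ref{L-R} tells us the subquotients of the kernel are of the form $S_\nu(V)$ with $\nu \prec \lambda \attach \mu$, but to conclude these are good I must already know the lemma for all such $\nu$. Since $\nu$ has the same size as $\lambda \attach \mu$ but is strictly lower in dominance order, an induction on dominance order (for fixed size) closes the loop — one needs to check that the partial order $\prec$ has no infinite descending chains among partitions of a fixed size, which is clear. One must also take care that all the $\nu$ appearing satisfy $l(\nu) \le n$ so that goodness is not vacuously about a zero module in a way that breaks the argument; in fact this is automatic since $\nu \prec \lambda\attach\mu$ together with both sides having all parts controlled keeps lengths bounded appropriately, but this should be spelled out. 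Alternatively, and perhaps more cleanly, one avoids Corollary~\ref{L-R} and instead uses that $S_\lambda(V)$ is a direct summand-free subquotient picked out by the highest weight of $\bigotimes_j \bigwedge^{\lambda^\dag_j}(V)$, invoking Lemma~\ref{good-prop}(1) once the tensor product is known good and the complementary part of a good filtration is identified — but this still requires the character computation, which is characteristic-independent by Remark~\ref{defined over Z}, to know which dual Weyl modules appear.
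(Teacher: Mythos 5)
Your proof is correct and closely parallel in spirit to the paper's, but it differs in one organizational respect that is worth noting. The paper peels off a single column at a time: it writes $\lambda = \mu \attach 1^t$ with $t = l(\lambda)$, applies Corollary~\ref{L-R} once to $S_\mu(V) \otimes \bigwedge^t(V) \twoheadrightarrow S_\lambda(V)$, and then needs the inductive hypothesis in two different forms — once to know that $S_\mu(V)$ is good (a partition of strictly smaller size), and once for the kernel subquotients $S_\nu(V)$ with $\nu \prec \lambda$ of the same size. So the paper is implicitly running a double induction (on $|\lambda|$ and then on dominance order within each size), although it only names the dominance order. Your version peels off all columns at once: $T = \bigotimes_j \bigwedge^{\lambda^\dag_j}(V)$ is good directly (each factor is covered by Zubkov's lemma since $\lambda^\dag_j \le l(\lambda) \le n < p$), and the remaining induction is purely on dominance order within partitions of size $|\lambda|$, with the minimal case $(1^d)$ handled outright. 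This is arguably a cleaner setup and avoids the slight imprecision in the paper's stated induction. You buy a simpler induction scheme; the paper buys a single, literal application of Corollary~\ref{L-R} rather than an iterated one.

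One small point to tighten in your write-up: the phrase "Corollary~\ref{L-R} applied iteratively" is a bit loose, because the intermediate kernels produced at each concatenation stage live in strictly smaller degree and are not directly comparable to $\lambda$ in dominance order. The cleaner way to justify the step is the one you gesture at in your last paragraph: $T$ is a good $\GL(V)$-module (tensor product of dual Weyl modules), its formal character is characteristic-independent (Remark~\ref{defined over Z}), in which $s_\lambda$ appears once and all other contributions are $s_\nu$ with $\nu \prec \lambda$; then Donkin's reordering lemma lets you place $S_\lambda(V)$ at the top of a good $\GL(V)$-filtration, so the kernel of $T \twoheadrightarrow S_\lambda(V)$ is filtered by $S_\nu(V)$ with $\nu \prec \lambda$ and $|\nu| = |\lambda|$. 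Your observation that any $\nu$ with $l(\nu) > n$ contributes $S_\nu(V) = 0$ and so is harmless is exactly right and closes the loop. With this adjustment, your argument is complete and correct.
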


\begin{proof}
The minimal elements in the dominance order on partitions are the partitions of the form $1^t$. We note that $S_{1^t}(V) = \bigwedge^t(V)$. From the above lemma, we see that all of these are good $G$-modules. We proceed by induction. Let $\lambda$ be a partition such that $S_\mu(V)$ is a good $G$-module for all $\nu$ smaller than $\lambda$ in the dominance order. If $l(\lambda) > n$, then $S_\lambda(V) = 0$, so we can assume $l(\lambda) \leq n < p$. Since $l(\lambda) < p$, we can write $\lambda = \mu \attach 1^t$ where $\mu$ is a partition and $t \leq l(\lambda) < p$. 

By the inductive hypothesis, $S_\mu(V)$ is a good $G$-module. Further, we have already observed that $S_{1^t}(V) = \bigwedge^t(V)$ is also a good $G$-module. Hence, by Lemma~\ref{good-prop},  $M = S_{\mu}(V) \otimes S_{1^t}(V)$ is a good $G$-module. By Corollary~\ref{L-R}, we have a surjection $\zeta: M \twoheadrightarrow S_{\mu \attach 1^t}(V) = S_\lambda(V)$ such that $\ker(\zeta)$ has a filtration by Schur modules $S_\nu(V)$ satisfying $\nu \prec \lambda$. By induction, all such $S_\nu(V)$'s are good $G$-modules. This means that $\ker(\zeta)$ has a filtration by good $G$-modules, and so by Lemma~\ref{semi.good}, we conclude that $\ker(\zeta)$ is a good $G$-module. By Lemma~\ref{good-prop}, we conclude that $S_\lambda(V) = M/\ker(\zeta)$ is also a good $G$-module. 
\end{proof}

\begin{corollary} \label{large.char.invring.good}
If $p > n$, then $\Sym(V)$ is a good $G$-module.
\end{corollary}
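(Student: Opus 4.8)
The plan is to reduce immediately to the lemma just proved. Recall that the degree-$d$ graded piece of the symmetric algebra is $\Sym^d(V) = S_{(d)}(V)$, the Schur module attached to the one-row partition $(d)$. Since we are assuming $p > n$, the preceding lemma applies to every partition, and in particular it tells us that each $\Sym^d(V) = S_{(d)}(V)$ is a good $G$-module.

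It then only remains to pass from the graded pieces to the whole algebra, which is a matter of bookkeeping with (possibly infinite) good filtrations. First I would set $W_N := \bigoplus_{d=0}^{N} \Sym^d(V) \subseteq \Sym(V)$, so that $0 = W_{-1} \subseteq W_0 \subseteq W_1 \subseteq \cdots$ is an ascending chain of $G$-submodules with $\bigcup_N W_N = \Sym(V)$ and $W_N / W_{N-1} \cong \Sym^N(V)$. By the previous step each subquotient $W_N/W_{N-1}$ is a good $G$-module, so Lemma~\ref{semi.good} applies directly and yields that $\Sym(V)$ is a good $G$-module. Equivalently, one could concatenate a chosen good filtration of each $\Sym^d(V)$ (refining along dual Weyl modules) into a single good filtration of $\Sym(V)$; the definition of a good module explicitly permits the resulting infinite ascending filtration.

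There is essentially no hard step here: the substance is entirely contained in the lemma that $S_\lambda(V)$ is good for all $\lambda$ when $p > n$, and the only point to watch is that $\Sym(V)$ is infinite-dimensional. That is precisely why the argument is organized around the exhausting filtration by the $W_N$ together with Lemma~\ref{semi.good}, rather than via a finite filtration.
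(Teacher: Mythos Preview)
Your argument is correct and is precisely the intended one: the paper states this as an immediate corollary of the preceding lemma (that every $S_\lambda(V)$ is good when $p>n$), with no further proof given. Your filling-in via $\Sym^d(V)=S_{(d)}(V)$ and the exhausting filtration together with Lemma~\ref{semi.good} is exactly what the paper has in mind.
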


\begin{lemma} \label{lemma.after.cor}
If $p > n$, the module $\Sym(V \otimes W)$ is a good $G$-module for any finite dimensional $W$ ($G$ acts trivially on $W$).
\end{lemma}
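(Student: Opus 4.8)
The plan is to reduce the statement about $\Sym(V \otimes W)$ to the already-established Corollary~\ref{large.char.invring.good} applied to a single copy of a representation, by exploiting Cauchy's formula. Write $\Sym(V \otimes W) = \bigoplus_{\lambda} S_\lambda(V) \otimes S_\lambda(W)$, the sum running over all partitions $\lambda$. Since $G$ acts trivially on $W$, each $S_\lambda(W)$ is just a (finite-dimensional) multiplicity vector space for $G$, so $S_\lambda(V) \otimes S_\lambda(W)$ is a direct sum of $\dim S_\lambda(W)$ copies of $S_\lambda(V)$ as a $G$-module. By the preceding lemma, $S_\lambda(V)$ is a good $G$-module whenever $p > n$ (and it vanishes when $l(\lambda) > n$, so only finitely many isomorphism types of summands occur in each graded piece). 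A finite direct sum of good $G$-modules is again good, since one can concatenate good filtrations; hence each homogeneous component $\Sym^d(V \otimes W) = \bigoplus_{|\lambda| = d} S_\lambda(V) \otimes S_\lambda(W)$ is a good $G$-module.

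From there I would assemble the graded pieces. The module $\Sym(V \otimes W)$ is the (increasing) union of its truncations $\bigoplus_{d \le N} \Sym^d(V \otimes W)$, and each such truncation is a finite direct sum of the good modules $\Sym^d(V \otimes W)$, hence good. Equivalently, one has a filtration of $\Sym(V \otimes W)$ by $G$-submodules with subquotients $\Sym^d(V \otimes W)$, each of which is good, so Lemma~\ref{semi.good} applies directly and yields that $\Sym(V \otimes W)$ itself is a good $G$-module.

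Two minor points need care rather than real work. First, Cauchy's formula is usually stated over a field of characteristic zero as a decomposition; in positive characteristic the correct statement is that $\Sym^d(V \otimes W)$ admits a filtration (not necessarily split) with subquotients $S_\lambda(V) \otimes S_\lambda(W)$ for $|\lambda| = d$ — this is the standard "Cauchy filtration," valid over $\Z$ and hence in any characteristic. Using the filtered version is in fact cleaner: combine it with Lemma~\ref{good-prop} (tensor products of good modules are good, so $S_\lambda(V) \otimes S_\lambda(W)$ is good once $S_\lambda(V)$ is) and Lemma~\ref{semi.good} to conclude that $\Sym^d(V \otimes W)$ is good, then iterate over $d$ as above. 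Second, one should note explicitly that although $W$ is fixed and finite-dimensional, $V \otimes W$ need not have dimension $\le p$, so the previous lemma cannot be applied to $V \otimes W$ in one stroke — this is precisely why the decomposition into $S_\lambda(V)$'s, each of dimension controlled by $n < p$, is needed. The only real input is the Cauchy filtration; everything else is formal manipulation of good filtrations, so I expect the main (and only mild) obstacle to be citing or sketching the characteristic-free Cauchy filtration correctly.
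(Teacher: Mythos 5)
Your argument is correct, but it takes a noticeably longer route than the paper. The paper's proof is one line: since $G$ acts trivially on $W$, the graded $G$-algebra $\Sym(V \otimes W)$ is isomorphic to $\Sym(V)^{\otimes \dim W}$; by Corollary~\ref{large.char.invring.good} the factor $\Sym(V)$ is a good $G$-module when $p>n$, and Lemma~\ref{good-prop}(2) (tensor products of good modules are good) then closes the argument by iterating $\dim W$ times. Your approach instead invokes the characteristic-free Cauchy filtration (which in the paper appears only later, as the Doubilet--Rota--Stein theorem) to reduce to the goodness of the individual Schur modules $S_\lambda(V)$ and then reassembles via Lemma~\ref{semi.good}. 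That is logically sound and not circular, and you correctly flag both subtleties (filtration rather than direct sum in positive characteristic, and the fact that the preceding lemma cannot be applied directly to $V \otimes W$ because $\dim(V\otimes W)$ may exceed $p$). What the paper's version buys is economy: it sidesteps Cauchy entirely by observing that $V\otimes W\cong V^{\oplus \dim W}$ as a $G$-module, so $\Sym$ of it factors as a tensor power, and one never needs the finer $S_\lambda(V)\otimes S_\lambda(W)$ structure at this stage. Your version buys explicit visibility of the Schur-module layers, which is in fact the structure the paper extracts in the very next lemmas; so your route is a reasonable preview of what comes later, just unnecessary for this particular statement.
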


\begin{proof}
We have $\Sym(V \otimes W) = \Sym(V)^{\otimes \dim W}$. So, it is a good module by Lemma~\ref{good-prop}.
\end{proof}

The following result first appeared in \cite{DRS}, but can also be found in \cite{ABW}.

\begin{theorem} [Doubilet-Rota-Stein]
$\Sym^d(V \otimes W)$ has a natural filtration whose associated graded module is $$\bigoplus_{\lambda \vdash d} S_{\lambda}(V) \otimes S_{\lambda}(W).$$
\end{theorem}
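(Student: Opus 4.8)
The plan is to produce the filtration on $\Sym^d(V\otimes W)$ explicitly and identify the subquotients by a character/multiplicity argument, exactly parallel to the proof of Corollary~\ref{L-R}. First I would observe that the statement is really a statement about the $\GL(V)\times\GL(W)$-module structure of $\Sym^d(V\otimes W)$: this space carries a polynomial action of $\GL(V)\times\GL(W)$ of degree $d$ in each factor. By Lemma~\ref{lemma.after.cor} (applied with the roles of ``group'' and ``trivial-action space'' taken by $\GL(V)$ and $W$, and symmetrically), $\Sym(V\otimes W)$ is a good $\GL(V)\times\GL(W)$-module whenever the characteristic exceeds $\max(\dim V,\dim W)$; in particular its graded pieces $\Sym^d(V\otimes W)$ have good filtrations, and the dual Weyl modules of $\GL(V)\times\GL(W)$ are the outer tensor products $S_\lambda(V)\otimes S_\mu(W)$. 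So the only thing to pin down is (i) which pairs $(\lambda,\mu)$ occur and with what multiplicity, and (ii) that they can be arranged into an honest filtration with subquotients $S_\lambda(V)\otimes S_\lambda(W)$ and nothing else.

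For (i), the multiplicities are detected by the formal bicharacter, which by Remark~\ref{defined over Z} is independent of the characteristic; hence it suffices to compute in characteristic zero, where Cauchy's formula gives the decomposition $\Sym^d(V\otimes W)\cong\bigoplus_{\lambda\vdash d}S_\lambda(V)\otimes S_\lambda(W)$ as $\GL(V)\times\GL(W)$-modules. Thus in every characteristic the subquotients of any good filtration of $\Sym^d(V\otimes W)$ are precisely the $S_\lambda(V)\otimes S_\lambda(W)$ with $\lambda\vdash d$, each with multiplicity one. For (ii), one then invokes the reordering lemma (the one preceding Corollary~\ref{L-R}, \cite[Proposition~3.2.6]{Donkin2}): since each $\lambda\vdash d$ occurs once, we may order the filtration so that the subquotients appear in any order refining the partial order $\prec$ on partitions, e.g. from $\prec$-largest down to $\prec$-smallest, which yields the asserted ``natural'' filtration. (The word \emph{natural} in the statement refers to the classical straightening-law filtration of \cite{DRS,ABW}, defined integrally and functorially; one could instead cite that construction directly and only use the character computation to identify its associated graded — I would mention both routes but carry out the representation-theoretic one since all the needed machinery is already in place.)

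The main obstacle is a bookkeeping subtlety rather than a deep point: Lemma~\ref{lemma.after.cor} as stated gives goodness as a $G$-module for $G$ acting on $V$ with $W$ a trivial multiplicity space, so to get goodness as a $\GL(V)\times\GL(W)$-module I need the version where the ambient reductive group is $\GL(V)\times\GL(W)$ itself acting naturally on $V\otimes W$ — this follows from the same argument (Corollary~\ref{large.char.invring.good} applied to the good $\GL(V)\times\GL(W)$-module $V\otimes W$, whose dimension is $nm$, so one needs $p>nm$), and I would insert a sentence making this explicit. The second point requiring care is that ``has a filtration whose associated graded is $\bigoplus_\lambda S_\lambda(V)\otimes S_\lambda(W)$'' should be read as an unordered direct sum of subquotients; the reordering lemma is exactly what licenses passing between the multiset of subquotients and an actual filtration, so I would cite it at that step. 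No genuinely new estimates are needed — everything reduces to Cauchy's formula in characteristic zero plus characteristic-independence of characters.
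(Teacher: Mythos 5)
The paper does not prove this theorem at all — it is cited to \cite{DRS} and \cite{ABW}, where it is established by an explicit, characteristic-free (indeed, defined over $\Z$) construction: the straightening-law filtration of the letter-place algebra in \cite{DRS}, or equivalently the Cauchy filtration of \cite{ABW}. Your representation-theoretic route is appealing because it reuses machinery already in the paper, but it has a genuine gap: it only proves the statement when $\kar K = 0$ or $p > (\dim V)(\dim W)$. You are candid that establishing goodness of $\Sym^d(V\otimes W)$ as a $\GL(V)\times\GL(W)$-module via Corollary~\ref{large.char.invring.good} requires $p > nm$, but you do not flag that this makes the argument strictly weaker than the theorem you set out to prove. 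The theorem has \emph{no} hypothesis on the characteristic, and the paper needs it that way: in the lemma that follows, $W = K^m$ with $m$ unbounded, so any bound of the form $p > nm$ (or even $p > \max(n,m)$) would wreck the subsequent results, which deliberately depend only on $p > n = \dim V$.

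There is also a conceptual point worth absorbing: the assertion that $\Sym^d(V\otimes W)$ admits a $\GL(V)\times\GL(W)$-good filtration in \emph{arbitrary} characteristic with subquotients $S_\lambda(V)\otimes S_\lambda(W)$ is essentially the content of Doubilet--Rota--Stein, not something one can deduce for free from the general theory of good filtrations (Zubkov's lemma plus Corollary~\ref{large.char.invring.good} only get you there in large characteristic, as you observed). So the route via characters and the reordering lemma, while correct in the range where it applies, cannot be the whole story. Your parenthetical remark that one could ``cite that construction directly'' is in fact the correct move, and it is what the paper does; the character computation you describe is then only needed to identify the associated graded of the \emph{given} filtration, not to manufacture the filtration.

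A smaller point: the word ``natural'' in the statement refers to a canonical, functorial filtration defined over $\Z$; producing \emph{some} good filtration via \cite[Proposition~3.2.6]{Donkin2} does not yield a natural one. This matters less for how the paper uses the theorem, but it is another reason the explicit construction, rather than an abstract existence argument, is the intended proof.
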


\begin{corollary}
$\Sym^d(V \otimes W)$ has a natural filtration whose associated graded module is $$\bigoplus_{\lambda \vdash d, l(\lambda) \leq n} S_{\lambda}(V) \otimes S_{\lambda}(W).$$
\end{corollary}
\begin{proof}
This follows from the above theorem, since $S_{\lambda}(V) = 0$ if $l(\lambda) > n$.
\end{proof}

\begin{lemma}
Suppose $p > n$. Then $\Sym^d(V \otimes W)^{G}$ has a natural filtration whose associated graded module is $$\bigoplus_{\lambda \vdash d, l(\lambda) \leq n} S_{\lambda}(V)^{G} \otimes S_{\lambda}(W).$$
\end{lemma}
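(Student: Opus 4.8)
The plan is to take $G$-invariants in the filtration of $\Sym^d(V\otimes W)$ produced by the preceding corollary and to check that the functor $(-)^G$ is exact on it term by term. Fix an enumeration $\lambda_1,\dots,\lambda_N$ of the partitions $\lambda\vdash d$ with $l(\lambda)\le n$, and let $0=F_0\subseteq F_1\subseteq\dots\subseteq F_N=\Sym^d(V\otimes W)$ be the associated (finite, since everything is finite dimensional) natural filtration, so that $F_j/F_{j-1}\cong S_{\lambda_j}(V)\otimes S_{\lambda_j}(W)$. Since $p>n$, the lemma proved just above shows that each $S_{\lambda_j}(V)$ is a good $G$-module; as $S_{\lambda_j}(W)$ carries the trivial $G$-action it is a direct sum of copies of $\nabla(0)$, so by Lemma~\ref{good-prop}(2) each subquotient $F_j/F_{j-1}$ is a good $G$-module, and then Lemma~\ref{semi.good} shows that every $F_j$ itself is a good $G$-module.

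The heart of the argument is to promote each short exact sequence $0\to F_{j-1}\to F_j\to F_j/F_{j-1}\to 0$ to a short exact sequence of invariants. Applying the left exact functor $(-)^G=H^0(G,-)$ yields an exact sequence $0\to F_{j-1}^G\to F_j^G\to (F_j/F_{j-1})^G\to H^1(G,F_{j-1})$, so it suffices to know $H^1(G,F_{j-1})=0$. This is the one genuinely nontrivial input, and I expect it to be the main obstacle: a $G$-module with a good filtration has vanishing higher cohomology (a consequence of Kempf's vanishing theorem applied to each dual Weyl subquotient, see the standard references on good filtrations such as \cite{Donkin2}), and $F_{j-1}$ has a good filtration by the previous paragraph. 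Granting this, $0\to F_{j-1}^G\to F_j^G\to (F_j/F_{j-1})^G\to 0$ is exact for every $j$, so $0=F_0^G\subseteq F_1^G\subseteq\dots\subseteq F_N^G=\Sym^d(V\otimes W)^G$ is a filtration whose $j$-th subquotient is $(F_j/F_{j-1})^G=(S_{\lambda_j}(V)\otimes S_{\lambda_j}(W))^G=S_{\lambda_j}(V)^G\otimes S_{\lambda_j}(W)$, the last identification holding because $G$ acts trivially on $W$, hence on $S_{\lambda_j}(W)$. Summing over $j$ gives the asserted associated graded $\bigoplus_{\lambda\vdash d,\ l(\lambda)\le n}S_\lambda(V)^G\otimes S_\lambda(W)$, and the filtration is natural since it is the image under $(-)^G$ of the natural filtration on $\Sym^d(V\otimes W)$.

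If one wishes to stay entirely within the tools of this section, the required exactness can instead be obtained by a dimension count: left exactness of $(-)^G$ gives $\dim F_j^G\le \dim F_{j-1}^G+\dim(F_j/F_{j-1})^G$ for each $j$, hence $\dim\Sym^d(V\otimes W)^G\le\sum_j\dim S_{\lambda_j}(V)^G\cdot\dim S_{\lambda_j}(W)$; on the other hand, refining the filtration $\{F_j\}$ into dual Weyl subquotients and applying Lemma~\ref{good-prop}(3) shows that $\dim\Sym^d(V\otimes W)^G$ equals the multiplicity of $\nabla(0)$ in that refinement, which is exactly $\sum_j\dim S_{\lambda_j}(V)^G\cdot\dim S_{\lambda_j}(W)$. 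Equality then forces each inequality above to be an equality, i.e. each $F_j^G\to(F_j/F_{j-1})^G$ is surjective, and one concludes as before. Either way, the only real content is the exactness of invariants on these sequences; the rest is bookkeeping with the Doubilet--Rota--Stein filtration and the triviality of the $G$-action on $W$.
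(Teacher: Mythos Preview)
Your proposal is correct, and your alternative dimension-count argument is precisely the route the paper takes: the paper establishes the injections $\eta_i:F_i^G/F_{i-1}^G\hookrightarrow(F_i/F_{i-1})^G$, then refines the filtration to a good filtration of $\Sym^d(V\otimes W)$ and uses Lemma~\ref{good-prop}(3) on both sides to force $\sum_i\dim(F_i^G/F_{i-1}^G)=\sum_i\dim(F_i/F_{i-1})^G$, whence each $\eta_i$ is an isomorphism.

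Your primary argument via $H^1(G,F_{j-1})=0$ is a genuinely different, and arguably cleaner, route. It trades the dimension bookkeeping for a single cohomological fact (vanishing of higher cohomology for modules with a good filtration), which is standard but lies outside the properties of good modules explicitly recorded in this section. What it buys is conceptual transparency: exactness of $(-)^G$ on the filtration is seen directly rather than inferred from a global count. The paper's approach, by contrast, stays strictly within the three properties listed in Lemma~\ref{good-prop} and so is more self-contained relative to the section's toolkit. Either argument is perfectly adequate here.
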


\begin{proof}
Let $0 = F_0 \subseteq F_1 \subseteq \dots \subseteq F_m = \Sym^d(V \otimes W)$ denote the $G \times \GL(W)$-filtration from the above corollary. We know that $F_i/F_{i-1}$ is of the form $S_{\lambda}(V) \otimes S_{\lambda}(W)$, so $(F_i/F_{i-1})^G = S_{\lambda}(V)^G \otimes S_{\lambda}(W)$

Observe that $\Sym^d(V \otimes W)^G$ has a filtration 
$$0 = F_0^G \subseteq F_1^G \subseteq \dots \subseteq F_m^G = \Sym^d(V \otimes W)^G.$$ 
The associated graded module of this filtration is $\oplus_i F_i^G/F_{i-1}^G$.
Hence, if we show that $F_i^G/F_{i-1}^G = (F_i/F_{i-1})^G$, we would be done. It is easy to see that we have natural injective maps $\eta_i: F_i^G/F_{i-1}^G \hookrightarrow (F_i/F_{i-1})^G$ for each $i$. So, it suffices to show that the maps $\eta_i$ are isomorphisms. We show this by counting the dimension of $\Sym(V \otimes W)^G$ in two ways. 

First, observe that $\sum_i \dim(F_i^G/F_{i-1}^G) = \dim\Sym^d(V \otimes W)^G$ by a simple telescoping argument. On the other hand, $\dim(F_i/F_{i-1})^G$ is the multiplicity of the trivial module in any good filtration for $F_i/F_{i-1}$ by Lemma~\ref{good-prop}. Now, consider a good filtration for each quotient $F_i/F_{i-1}$, and then lift them to get a good filtration of $\Sym^d(V\otimes W)$. Thus the multiplicity of the trivial module in such a good filtration is $\sum_i \dim(F_i/F_{i-1})^G$ which is again equal to $\dim \Sym^d(V \otimes W)^G$ by Lemma~\ref{good-prop}. Thus we have $\sum_i \dim(F_i^G/F_{i-1}^G) = \sum_i \dim(F_i/F_{i-1})^G$. But now since each $\eta_i$ is an injection, it follows that they must all be isomorphisms.
\end{proof}

\begin{corollary} \label{bounded partitions}
Suppose $p > n$, then $\Sym^d(V \otimes W)^G$ has a natural filtration whose associated subquotients are all of the form $S_{\lambda}(W)$ with $\lambda \vdash d$ and $l(\lambda) \leq n$.
\end{corollary}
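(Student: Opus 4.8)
The plan is to simply refine the filtration produced by the previous lemma. That lemma gives a natural filtration of $\Sym^d(V \otimes W)^G$ whose associated graded module is $\bigoplus_{\lambda \vdash d,\, l(\lambda) \leq n} S_{\lambda}(V)^{G} \otimes S_{\lambda}(W)$, so it suffices to split each summand $S_{\lambda}(V)^{G} \otimes S_{\lambda}(W)$ further into pieces of the form $S_\lambda(W)$.

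First I would observe that, since $G$ acts trivially on $W$, the space $S_{\lambda}(V)^{G}$ carries the trivial $\GL(W)$-action, so as a $\GL(W)$-module $S_{\lambda}(V)^{G} \otimes S_{\lambda}(W) \cong S_{\lambda}(W)^{\oplus k_\lambda}$ with $k_\lambda = \dim_K S_{\lambda}(V)^{G}$. A direct sum $S_{\lambda}(W)^{\oplus k_\lambda}$ has the evident increasing chain of $\GL(W)$-submodules $0 \subseteq S_{\lambda}(W) \subseteq S_{\lambda}(W)^{\oplus 2} \subseteq \dots \subseteq S_{\lambda}(W)^{\oplus k_\lambda}$, whose successive subquotients are each isomorphic to $S_{\lambda}(W)$; this chain is natural in $W$. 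Inserting these chains into the filtration of the lemma between consecutive terms $F_{i-1}^G \subseteq F_i^G$ (whose quotient is one of the summands $S_{\lambda}(V)^{G} \otimes S_{\lambda}(W)$) produces a longer natural filtration of $\Sym^d(V \otimes W)^G$ all of whose subquotients are of the form $S_{\lambda}(W)$.

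Finally I would record that only partitions $\lambda \vdash d$ with $l(\lambda) \leq n$ actually occur: those with $l(\lambda) > n$ satisfy $S_\lambda(V) = 0$ and hence were already absent from the graded module in the lemma, and any $\lambda$ with $k_\lambda = 0$ contributes no subquotient at all. This gives exactly the stated conclusion. There is no real obstacle here — the only point requiring the hypothesis $p > n$ is the input lemma itself, and the refinement step is entirely formal.
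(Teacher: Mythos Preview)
Your proposal is correct and is exactly the (implicit) argument the paper has in mind: the corollary is stated without proof because it follows immediately from the preceding lemma by refining each subquotient $S_{\lambda}(V)^{G}\otimes S_{\lambda}(W)\cong S_{\lambda}(W)^{\oplus k_\lambda}$ into its individual copies of $S_{\lambda}(W)$. Your observation that the refinement is natural in $W$ (since a choice of basis for $S_\lambda(V)^G$ is independent of $W$) is precisely what is needed for the subsequent application via Corollary~\ref{cor:bdd.pol}.
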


\begin{corollary} \label{inv.ring.pol.low.degrees}
Assume $p > kn$ for some $k \geq 2$. Let $W$ be a vector space with $\dim W \geq n\lceil \frac{d}{n(k-1)} \rceil$. Then, for any inclusion of vector spaces $W \hookrightarrow W'$, we have 
$$
 \left< \Sym^d(V \otimes W)^G \right>_{\GL(W')} = \Sym^d(V \otimes W')^G.
 $$
\end{corollary}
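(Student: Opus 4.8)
The plan is to realize $\Sym^d(V\otimes-)^G$ as a functor of exactly the shape handled by Corollary~\ref{cor:bdd.pol}, and then to invoke that corollary directly; all the genuinely substantive work has already been isolated there.

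First I would set $L:=\Sym^d(V\otimes-)^G\colon\Vect\to\Vect$. Since $p>kn\geq 2n>n$, Corollary~\ref{bounded partitions} applies and equips $L(W)$ with a filtration, natural in $W$, whose associated subquotients are Schur modules $S_\lambda(W)$ with $\lambda\vdash d$ and $l(\lambda)\leq n$. Read as a statement about the functor $L$ itself, this says precisely that $L$ admits a filtration of functors whose subquotients are among the Schur functors $S_\lambda$ with $\lambda\vdash d$ and $l(\lambda)\leq n$ --- which is exactly the hypothesis of Corollary~\ref{cor:bdd.pol}.

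Next I would apply Corollary~\ref{cor:bdd.pol} to this $L$, with its ``$V$'' taken to be our $W$ and its ``$W$'' taken to be our $W'$. The hypotheses needed there are $\kar(K)=p>kn$ with $k\geq 2$ and $\dim W\geq n\lceil\tfrac{d}{n(k-1)}\rceil$, which are precisely the hypotheses we are given. The corollary then yields $\left<L(W)\right>_{\GL(W')}=L(W')$, and unwinding the definition of $L$ this is exactly $\left<\Sym^d(V\otimes W)^G\right>_{\GL(W')}=\Sym^d(V\otimes W')^G$, as claimed.

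The one point that deserves care --- and which I regard as the crux --- is that the filtration produced by Corollary~\ref{bounded partitions} is genuinely functorial in $W$, so that the inclusion $W\hookrightarrow W'$ induces an inclusion $L(W)\hookrightarrow L(W')$ carrying filtration steps to filtration steps and inducing the standard inclusions $S_\lambda(W)\hookrightarrow S_\lambda(W')$ on the associated graded pieces; only then does Corollary~\ref{cor:bdd.pol} apply verbatim. This naturality is the one built into the Doubilet--Rota--Stein filtration of $\Sym^d(V\otimes W)$, and it is preserved when one passes to $G$-invariants through the dimension-counting argument used to prove the lemma immediately preceding Corollary~\ref{bounded partitions}. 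Granting this compatibility, nothing further is required: the polarization step (Theorem~\ref{theo:pol:main}), the column-splitting lemma, and the semisimplicity of polynomial $\GL_m$-representations of degree at most $kn$ (Corollary~\ref{cat:ss}) are all already packaged inside Corollary~\ref{cor:bdd.pol}, so no additional obstacle remains.
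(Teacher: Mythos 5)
Your proof is correct and follows essentially the same route as the paper: define the functor $L=\Sym^d(V\otimes -)^G$, note that Corollary~\ref{bounded partitions} provides a filtration of $L$ by Schur functors $S_\lambda$ with $\lambda\vdash d$ and $l(\lambda)\leq n$, and then invoke Corollary~\ref{cor:bdd.pol}. Your additional remark on the functoriality of the Doubilet--Rota--Stein filtration is a sound observation that the paper leaves implicit.
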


\begin{proof}
First observe that $p > kn > n$. Consider the polynomial functor $L$ defined by $L(U) = \Sym(V \otimes U)^G$. $L$ has a filtration by $S_{\lambda}$ with $\lambda \vdash d$ and $l(\lambda) \leq n$ by the previous corollary. Hence, by Corollary~\ref{cor:bdd.pol}, we have $\left< L(W) \right>_{\GL(W')} = L(W')$.
\end{proof}

\section{Technical details} \label{Sec:technical}
We discuss a few elementary results before proceeding to the main technical result.
\subsection{Decomposable elements}
For a graded ring $R = \bigoplus_{d=0}^\infty R_d$, we define the notion of decomposable and indecomposable elements.

\begin{definition}
A homogeneous element $f \in R_d$ of degree $d$ is called decomposable if it can be written as $f = \sum_{i \in I} g_i h_i$, where $g_i,h_i$ are homogeneous elements of degree $< d$. If a homogeneous element is not decomposable, we call it indecomposable.
\end{definition}

\begin{corollary}
The set $R_{\leq N} = \bigoplus_{i =1}^N R_i$ is a set of generators for $R$ if and only if for all $d > N$, every element of $R_d$ is decomposable.
\end{corollary}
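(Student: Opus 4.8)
The statement is an elementary fact about generation of graded rings, so the plan is to prove both directions directly from the definitions. I would set $R = \bigoplus_{d \geq 0} R_d$ (graded, with $R_0$ the base ring, say a field $K$, though the argument only uses that $R_0$ is where scalars live) and recall that $R_{\leq N} = \bigoplus_{i=1}^N R_i$ generates $R$ as a $K$-algebra precisely when every homogeneous element of $R$ lies in the subalgebra generated by $R_1, \dots, R_N$; since the subalgebra generated by a homogeneous set is itself graded, it suffices to check this degree by degree.

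For the ``only if'' direction: assume $R_{\leq N}$ generates $R$ and fix $d > N$. Any $f \in R_d$ is a $K$-linear combination of products $r_1 r_2 \cdots r_k$ with each $r_j \in R_{i_j}$, $1 \leq i_j \leq N$, and, extracting the degree-$d$ part, we may assume $\sum_j i_j = d$. Since $d > N \geq i_1$, such a product has at least two factors ($k \geq 2$), so we can split it as $(r_1)(r_2 \cdots r_k)$ where $r_1$ has degree $i_1 < d$ and $r_2 \cdots r_k$ has degree $d - i_1$, which is strictly between $0$ and $d$; hence each such product is decomposable, and therefore so is $f$.

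For the ``if'' direction: assume that for every $d > N$, every element of $R_d$ is decomposable. I would show by strong induction on $d$ that $R_d$ is contained in the subalgebra $A$ generated by $R_{\leq N}$. For $d \leq N$ this is immediate since $R_d \subseteq R_{\leq N} \subseteq A$. For $d > N$, take $f \in R_d$; by hypothesis $f = \sum_{i \in I} g_i h_i$ with $g_i, h_i$ homogeneous of degree $< d$, so by the inductive hypothesis each $g_i$ and each $h_i$ lies in $A$, whence $f \in A$. This proves $R = A$, i.e.\ $R_{\leq N}$ is a generating set.

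There is no real obstacle here; the only point requiring a small amount of care is the bookkeeping in the ``only if'' direction — making sure that when we expand a general algebra element in terms of generators we can always peel off a single generator to witness decomposability, which works precisely because $d > N$ forces at least two factors. I would keep the write-up to a few lines, as this is a routine verification used only to translate the generation bound $\beta$ into the language of decomposable elements that the rest of Section~\ref{Sec:technical} uses.
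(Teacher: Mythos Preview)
Your proof is correct; the paper actually states this corollary without proof, treating it as immediate from the definition of decomposable elements. Your two-direction argument (strong induction on the degree for the ``if'' direction, and peeling off a factor using $d>N$ for the ``only if'' direction) is exactly the routine verification one would supply if asked to spell it out.
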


For the rest of the section, let $V$ is a representation of a group $G$ over some algebraically closed field $K$.


\begin{lemma} \label{dec.inv.pol}
The set of decomposable invariants in $K[V^m]^G_d = \Sym^d(V^* \otimes K^m)^G$ is $\GL_m$ stable.
\end{lemma}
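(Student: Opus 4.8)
The plan is to show that the property of being decomposable is preserved under the $\GL_m$-action, which in turn follows from the fact that the $\GL_m$-action respects both the grading and the multiplication on $K[V^m]^G$. First I would recall that the identification $K[V^m]^G = \Sym(V^* \otimes K^m)^G$ is an identification of graded rings, where the grading is the usual polynomial degree, and that the $\GL_m$-action (coming from the action of $\GL_m$ on $K^m$) commutes with the $G$-action, so $\GL_m$ indeed acts on $K[V^m]^G$. The key observations are: (i) the $\GL_m$-action preserves the grading, i.e., $g \cdot R_d \subseteq R_d$ for every $g \in \GL_m$, since $\GL_m$ acts linearly on the variables and hence by graded automorphisms; and (ii) the $\GL_m$-action is by ring automorphisms, so $g \cdot (fh) = (g\cdot f)(g \cdot h)$.

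Granting these two facts, the argument is immediate: suppose $f \in R_d := K[V^m]^G_d$ is decomposable, so $f = \sum_{i \in I} g_i h_i$ with $g_i, h_i$ homogeneous of degree $< d$. For any $\sigma \in \GL_m$, we have
$$
\sigma \cdot f = \sum_{i \in I} (\sigma \cdot g_i)(\sigma \cdot h_i).
$$
By observation (i), each $\sigma \cdot g_i$ is homogeneous of the same degree as $g_i$, hence of degree $< d$, and likewise for $\sigma \cdot h_i$. Moreover each $\sigma \cdot g_i$ and $\sigma \cdot h_i$ still lies in $K[V^m]^G$ since the $\GL_m$-action preserves the $G$-invariants (as the two actions commute). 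Therefore $\sigma \cdot f$ is again a sum of products of homogeneous invariants of degree $< d$, i.e., $\sigma \cdot f$ is decomposable. Since $\sigma \in \GL_m$ was arbitrary, the set of decomposable elements of $R_d$ is $\GL_m$-stable.

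I do not expect any genuine obstacle here; the statement is essentially a formal consequence of the compatibility of the $\GL_m$-action with the graded ring structure on $K[V^m]^G$. The only point that warrants a sentence of care is spelling out why the $\GL_m$-action descends to $K[V^m]^G$ (commuting actions) and why it is grading-preserving (linearity in the $K^m$-variables), both of which were already noted in Section~\ref{Sec:prelim} and the introduction. Strictly speaking one should also note that it suffices to check $\GL_m$-stability on a spanning set of group elements, but since we argue for all $\sigma \in \GL_m$ at once this is not needed.
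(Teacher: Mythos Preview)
Your proof is correct and follows essentially the same approach as the paper: write a decomposable $f$ as $\sum_i u_i v_i$ with $u_i,v_i$ homogeneous invariants of degree $<d$, apply $\sigma \in \GL_m$, and use that $\sigma$ acts by graded ring automorphisms commuting with $G$ to conclude $\sigma(f)=\sum_i \sigma(u_i)\sigma(v_i)$ is again decomposable. The paper's proof is just a terser version of exactly this argument.
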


\begin{proof}
Suppose $f \in K[V^m]^G_d$ is decomposable, and $\sigma \in \GL_m$. Then we can write $f = \sum_i u_iv_i$, where $u_i,v_i$ are homogeneous invariants of degree $< d$. Hence we have $\sigma(f) = \sum_i \sigma(u_i) \sigma(v_i)$. Hence $\sigma(f)$ is also decomposable since $\sigma(u_i)$ and $\sigma(v_i)$ are also homogeneous invariants of degree $< d$.
\end{proof}

\begin{lemma} \label{gen.set.to.all}
Assume $a \leq b$ and let $S$ be a set of generators for $K[V^a]^G$. If $\left<K[V^a]^G_d \right>_{\GL_b} = K[V^b]^G_d$ for all $d \leq \beta(K[V^b]^G)$, then $\left<S\right>_{\GL_b}$ is a generating set for $K[V^b]^G$.
\end{lemma}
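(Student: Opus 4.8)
The goal is to show that if $S$ generates $K[V^a]^G$ and the polarization hypothesis holds in every degree $d \le \beta(K[V^b]^G)$, then $\langle S\rangle_{\GL_b}$ generates $K[V^b]^G$. The plan is to show that the subalgebra $R \subseteq K[V^b]^G$ generated by $\langle S\rangle_{\GL_b}$ contains every homogeneous invariant, by induction on the degree $d$. The base case ($d = 0$, constants) is trivial. For the inductive step, fix $d > 0$ and assume $R$ contains all homogeneous invariants of degree $< d$; we want to show $K[V^b]^G_d \subseteq R$.

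\textbf{Step 1: the case $d \le \beta(K[V^b]^G)$.} Here I would invoke the hypothesis directly: $\langle K[V^a]^G_d\rangle_{\GL_b} = K[V^b]^G_d$. Now $K[V^a]^G_d$ is spanned by products of elements of $S$ of total degree $d$; each such product lies in $R$ because the factors (of degree $< d$, or of degree $d$ if a single generator) lie in $S \subseteq R$ — so in fact $K[V^a]^G_d \subseteq R$. Since $R$ is $\GL_b$-stable (it is generated by the $\GL_b$-stable set $\langle S\rangle_{\GL_b}$, and the span of products of a $\GL_b$-stable set is again $\GL_b$-stable), applying $\GL_b$ and taking spans gives $\langle K[V^a]^G_d\rangle_{\GL_b} \subseteq R$, hence $K[V^b]^G_d \subseteq R$.

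\textbf{Step 2: the case $d > \beta(K[V^b]^G)$.} By the corollary on decomposable elements (with $N = \beta(K[V^b]^G)$), every element of $K[V^b]^G_d$ is decomposable: $f = \sum_i g_i h_i$ with $g_i, h_i$ homogeneous of degree $< d$. A priori $g_i, h_i$ need not be invariant, but by applying the Reynolds-type averaging — or more simply, since $K[V^b]^G$ is a graded subalgebra and $R_{\le N}^G = K[V^b]^G_{\le N}$ is a generating set for $K[V^b]^G$, every homogeneous invariant of degree $> N$ is a sum of products of homogeneous \emph{invariants} of strictly smaller degree. By the inductive hypothesis each such factor lies in $R$, so $f \in R$.

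\textbf{Main obstacle.} The one delicate point is ensuring in Step 2 that the decomposition of $f$ can be taken with \emph{invariant} factors of smaller degree; this is exactly the content of the "decomposable iff low-degree-generated" corollary applied inside the graded ring $K[V^b]^G$ itself (not inside $K[V^b]$), so it is a matter of reading the corollary correctly rather than a real difficulty. A second bookkeeping point is the $\GL_b$-stability of $R$: one should note that if $T \subseteq K[V^b]^G$ is $\GL_b$-stable then so is the subalgebra it generates, since $\GL_b$ acts by algebra automorphisms; I would state this as a one-line observation. With these in hand the induction closes and $R = K[V^b]^G$, which is precisely the claim.
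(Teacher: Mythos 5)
Your proof is correct and follows essentially the same line of reasoning as the paper's: reduce to degrees $d \le \beta(K[V^b]^G)$ (you via induction and the decomposability corollary, the paper by noting it suffices to capture indecomposable invariants, which automatically have degree $\le \beta$), then use the polarization hypothesis together with the fact that $\GL_b$ acts by algebra automorphisms. Your ``Step 2'' worry about whether the factors in the decomposition are invariant is a red herring, as you yourself note: the decomposability corollary is applied inside the graded ring $K[V^b]^G$, so the factors are invariant by definition, and no Reynolds operator is needed.
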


\begin{proof}
Clearly, it suffices to show that every indecomposable invariant in $K[V^b]^G$ can be generated by $\left<S\right>_{\GL_b}$. Take an indecomposable invariant $f \in K[V^b]^G$. It has degree $d \leq \beta(K[V^b]^G)$. Thus $f \in\left<K[V^a]^G_d \right>_{\GL_b}$ by hypothesis. Hence, we have $f  = \sum_i g_i\cdot f_i$ with $f_i \in K[V^a]^G_d$ and $g_i \in \GL_b$. But since $S$ is a generating set for $K[V^a]^G$, we can write each $f_i = p_i(s_{i_1},\dots,s_{i_{r_i}})$ for some polynomial $p_i$ in $r_i$ variables, and $s_{i_j} \in S$. Thus we have 
$$
f = \sum_i g_i\cdot f_i = \sum_i g_i \cdot p_i(s_{i_1},\dots,s_{i_{r_i}}) = \sum_i p_i(g_i\cdot s_{i_1},\dots,g_i\cdot s_{i_{r_i}})
$$

But this means that $f$ is generated by $\left<S\right>_{\GL_b}$.

\end{proof}

\subsection{Main technical result}
For this section, let $G$ be a reductive group over an algebraically closed field $K$ of characteristic $p$, and let $V$ be an $n$-dimensional representation such that $V^*$ is a good $G$-module. This section is devoted to proving the following result.

\begin{proposition} \label{Weyl.technical}
Suppose $Q \geq \frac{1}{2}$ is such that $\beta(K[V^m])^G \leq mQ$ for all $m \geq 1$ and $p > 2Q(n+1) + n$. If $S$ is a generating set for $K[V^n]^G$, then  $\left<S\right>_{\GL_m}$ is a generating set for $K[V^m]^G$ for all $m\geq n$. 
\end{proposition}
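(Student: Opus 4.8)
The plan is to combine the polarization machinery of Sections~\ref{Sec:pol} and~\ref{Sec:good2} with the reduction lemmas of this section. By Lemma~\ref{gen.set.to.all}, it suffices to show that $\left<K[V^n]^G_d\right>_{\GL_m} = K[V^m]^G_d$ for every $m \geq n$ and every degree $d \leq \beta(K[V^m]^G)$. So fix such an $m$ and a critical degree $d$. The hypothesis $\beta(K[V^m]^G) \leq mQ$ gives us $d \leq mQ$, and this is the inequality that must be converted, via Corollary~\ref{inv.ring.pol.low.degrees}, into the statement that polarization from $n$ copies already fills up degree $d$.

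To apply Corollary~\ref{inv.ring.pol.low.degrees} (with $W = K^n$, $W' = K^m$, and the functor $U \mapsto \Sym(V^* \otimes U)^G$, using that $V^*$ is good), I need a choice of $k \geq 2$ with $p > kn$ and $n \geq n\lceil \frac{d}{n(k-1)}\rceil$, i.e.\ $d \leq n(k-1)$. Given $d \leq mQ$, the natural choice is to take $k-1$ of size roughly $mQ/n$, say $k = \lceil mQ/n\rceil + 1$, so that $n(k-1) \geq mQ \geq d$ automatically. Then I must check $p > kn$. We have $kn \leq mQ + 2n$ (being generous with the ceiling), so it would suffice to have $p > mQ + 2n$ — but this is \emph{false} for large $m$, so a naive single-step polarization from $n$ copies does not work, and this is precisely the main obstacle.

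The resolution is an \textbf{induction on $m$}, polarizing in bounded jumps rather than all at once. Suppose the claim holds for all $m' < m$; I want it for $m$. Pick an intermediate $m_0$ with $n \leq m_0 < m$ to be chosen, and a critical degree $d \leq mQ$ for the pair $(n,m)$. First polarize from $m_0$ copies to $m$ copies: by Corollary~\ref{inv.ring.pol.low.degrees} with $W = K^{m_0}$, $W' = K^m$, this works in degree $d$ provided there is a $k \geq 2$ with $p > kn$ and $m_0 \geq n\lceil \frac{d}{n(k-1)}\rceil$, for which it suffices that $m_0 \geq n\lceil d/n \rceil$ — no, rather one wants $d \leq n(k-1)$ with $k$ chosen so $kn < p$; the largest usable value is $k = \lfloor (p-1)/n\rfloor$, giving the constraint $d \leq n(\lfloor (p-1)/n\rfloor - 1)$, which by the hypothesis $p > 2Q(n+1)+n$ is at least roughly $2Q(n+1) - n \geq 2Qn$ (for $n \geq 1$), in particular $\geq 2Qm_0$ once $m_0 \leq n$... so in fact the clean statement is: fix once and for all $k := \lfloor (p-1)/n \rfloor \geq 2$; then $n(k-1) \geq 2Qn \geq 2Qn$, and Corollary~\ref{inv.ring.pol.low.degrees} with $W=K^{n}$ applies in every degree $d \leq n(k-1)$. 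The induction is therefore needed only to push past the ceiling in $\lceil d/(n(k-1))\rceil$: from $K[V^n]^G$ one reaches, by a single application, all of $K[V^{m}]^G$ in degrees $d \leq n(k-1)$; for larger $d$ one writes $m$ copies as $m_0 + (m-m_0)$ with both pieces $< m$, uses the inductive hypothesis to fill $K[V^{m_0}]^G$ and $K[V^{m-m_0}]^G$ from $n$ copies, and then Proposition~\ref{prop:tensor:pol} / the filtration of $\Sym^d(V^*\otimes K^m)^G$ by the $S_\lambda(K^m)$ to combine them.

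Let me make the bookkeeping precise. Set $k = \lfloor (p-1)/n\rfloor$; the hypothesis $p > 2Q(n+1)+n$ forces $k \geq 2Q(n+1)/n + 1 > 2$, and more usefully $n(k-1) \geq p - 2n > 2Q(n+1) - n \geq 2Qn$ for $n \geq 1$. Now I claim: for all $m \geq n$ and all $d \leq mQ$, $\left<K[V^n]^G_d\right>_{\GL_m} = K[V^m]^G_d$. Induct on $m$. For the base case $m = n$ there is nothing to prove. For the inductive step: if $d \leq n(k-1)$, Corollary~\ref{inv.ring.pol.low.degrees} (valid since $n \geq n\lceil d/(n(k-1))\rceil = n$ and $p > kn$) gives it directly. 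Otherwise $d > n(k-1) \geq 2Qn$, so $d/(2Q) > n$; since also $d \leq mQ$ we get $d/(2Q) \leq m/2 < m$, and I can split $m = m_1 + m_2$ with $m_1, m_2$ integers satisfying $d/(2Q) \leq m_i$ wherever possible — concretely take $m_1 = \lfloor m/2 \rfloor$, $m_2 = \lceil m/2\rceil$, both $\geq n$ (as $m \geq 2n$, which holds because $m Q \geq d > 2Qn$) and both $< m$, with $d \leq mQ \le 2m_iQ$... hmm, one needs $d \le m_iQ$ for the inductive hypothesis, so instead choose $m_1$ minimal with $m_1 Q \geq d$ (possible since $mQ \geq d$ and $m_1 \leq m$), note $m_1 = \lceil d/Q\rceil \geq 2n > n$ and $m_1 < m$ (as $d < mQ$ strictly when $d > n(k-1)$... if $d = mQ$ exactly, perturb), and set $m_2 = m - m_1 \geq$ ... this requires $m_2 \geq n$, i.e.\ $m_1 \leq m - n$; if not, i.e.\ if $\lceil d/Q\rceil > m-n$, then $m < \lceil d/Q\rceil + n \leq d/Q + n + 1$, combined with $d \leq mQ$ this pins $m$ into a bounded range where a direct argument or a different split handles it. I expect the main genuine obstacle to be exactly this splitting bookkeeping — ensuring both parts have $\geq n$ columns and support degree $d$ — rather than anything conceptually deep; once the split is fixed, the inductive hypothesis fills $K[V^{m_1}]^G_d$ and $K[V^{m_2}]^G_d$ from $K[V^n]^G_d$, and Corollary~\ref{bounded partitions} together with Proposition~\ref{prop:tensor:pol} (applied to $V_1 = K^{m_1}$, $V_2 = K^{m_2}$ inside $W = K^m$, using that each isotypic piece $S_\lambda(K^{m_i})$ with $l(\lambda)\leq n$ polarizes to $S_\lambda(K^m)$ by Corollary~\ref{cat:ss} since $|\lambda| = d$ and... wait, we need $d < p$, which holds as $d \leq mQ$ is not bounded — so one really does route this through the inductive hypothesis and Corollary~\ref{inv.ring.pol.low.degrees} applied at the bounded level $m_i$, not through irreducibility at level $m$) completes the step. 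Finally, feed the resulting statement into Lemma~\ref{gen.set.to.all} with $a = n$, $b = m$, using $\beta(K[V^m]^G) \leq mQ$ to see that all critical degrees $d$ satisfy $d \leq mQ$, which is the range just handled.
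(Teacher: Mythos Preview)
Your proposal has a genuine gap, and the paper's argument avoids it by organizing the induction differently.

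You correctly see that a single polarization from $K^n$ to $K^m$ via Corollary~\ref{inv.ring.pol.low.degrees} cannot work, since for $d$ near $mQ$ you would need $p > kn$ with $k$ of order $m$, which is unbounded. You then try to repair this by splitting $m = m_1 + m_2$ and recombining via Proposition~\ref{prop:tensor:pol}. But this splitting never closes: to invoke the inductive hypothesis on $m_i$ you need $d \leq m_iQ$, which forces $m_1 \geq \lceil d/Q\rceil$; and then $m_2 = m - m_1$ can drop below $n$ (you acknowledge this and say ``a direct argument or a different split handles it'', but never supply one). Worse, even granting the split, Proposition~\ref{prop:tensor:pol} is about $S_\mu(V_1)\otimes S_\nu(V_2)$ inside $S_\mu(W)\otimes S_\nu(W)$, and it is not clear how you intend to use it to conclude that $\langle K[V^{m_1}]^G_d\rangle_{\GL_m}$ together with the $m_2$ piece fills $K[V^m]^G_d$ in a \emph{fixed} degree $d$; the filtration of $\Sym^d(V^*\otimes K^m)^G$ has pieces $S_\lambda(K^m)$ with $|\lambda|=d$, and polarizing $S_\lambda(K^{m_i})$ up to $S_\lambda(K^m)$ again runs into the same irreducibility issue you started with.

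The paper's fix is much simpler: do not try to polarize from $K^n$ to $K^m$ in one shot, nor from two halves, but step from $K^m$ to $K^{m+1}$ and iterate. Fix $k = 2Q(1+1/n)+1$ once (so $p > kn$ by hypothesis). For each $m \geq n$ and each $d \leq \beta(K[V^{m+1}]^G) \leq (m+1)Q$, one checks the single inequality $m \geq n\lceil d/(n(k-1))\rceil$: if $d \leq n(k-1)$ this is just $m\geq n$; otherwise $\lceil d/(n(k-1))\rceil < 2d/(n(k-1))$ and it suffices that $d \leq m(k-1)/2$, which follows from $d\leq (m+1)Q$ and $k-1 = 2Q(1+1/n) \geq 2Q(1+1/m)$. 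So Corollary~\ref{inv.ring.pol.low.degrees} with $W=K^m$, $W'=K^{m+1}$ gives $\langle K[V^m]^G_d\rangle_{\GL_{m+1}} = K[V^{m+1}]^G_d$ for all relevant $d$, and Lemma~\ref{gen.set.to.all} (with $a=m$, $b=m+1$) then says that if $\langle S\rangle_{\GL_m}$ generates $K[V^m]^G$, so does $\langle S\rangle_{\GL_{m+1}}$ for $K[V^{m+1}]^G$. Induction on $m$ starting at $m=n$ finishes. The point you missed is that by letting the \emph{source} of the polarization grow with $m$, the dimension condition in Corollary~\ref{inv.ring.pol.low.degrees} keeps pace with the degree bound $(m+1)Q$, and no splitting is needed.
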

To prove this proposition, we need some other results first.

\begin{lemma}
Assume $\kar(K) > k n$ for some $k \geq 2$. If $m \geq n \lceil \frac{d}{n(k-1)} \rceil$, then we have $\left< K[V^m]^G_d \right>_{\GL_{m+1}} = K[V^{m+1}]^G_d$.
\end{lemma}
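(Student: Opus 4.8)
The plan is to deduce this lemma directly from Corollary~\ref{inv.ring.pol.low.degrees}, applied to the good $G$-module $V^*$. Recall the standing assumption of this section: $V$ is $n$-dimensional and $V^*$ is a good $G$-module. By definition $K[V^m]^G_d = \Sym^d(V^*\otimes K^m)^G$, where $\GL_m = \GL(K^m)$ acts on this space through its action on the second tensor factor $K^m$, and likewise $K[V^{m+1}]^G_d = \Sym^d(V^*\otimes K^{m+1})^G$ with $\GL_{m+1}$ acting through $K^{m+1}$.

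First I would fix the standard inclusion $\iota\colon K^m \hookrightarrow K^{m+1}$ onto the first $m$ coordinates; this induces the inclusion $\Sym^d(V^*\otimes K^m)^G \hookrightarrow \Sym^d(V^*\otimes K^{m+1})^G$ with respect to which $\left<\,\cdot\,\right>_{\GL_{m+1}}$ is formed, and it is compatible with the $\GL_{m+1}$-action in the sense required to speak of the smallest $\GL_{m+1}$-stable subspace containing $K[V^m]^G_d$. Next I would check that the two numerical hypotheses of Corollary~\ref{inv.ring.pol.low.degrees} (with $W = K^m$, $W' = K^{m+1}$, and the good module taken to be $V^*$ of dimension $n$) are met: the characteristic hypothesis $p > kn$ for some $k \geq 2$ is assumed here verbatim, and the dimension hypothesis $\dim W = m \geq n\lceil \tfrac{d}{n(k-1)}\rceil$ is exactly the remaining assumption of the lemma.

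With both hypotheses verified, Corollary~\ref{inv.ring.pol.low.degrees} gives
$$
\left< \Sym^d(V^*\otimes K^m)^G \right>_{\GL_{m+1}} = \Sym^d(V^*\otimes K^{m+1})^G,
$$
which upon rewriting both sides via the identifications above is precisely the assertion $\left< K[V^m]^G_d \right>_{\GL_{m+1}} = K[V^{m+1}]^G_d$. There is no real analytic obstacle here; the only point requiring care is bookkeeping, namely that the section of the paper proving Corollary~\ref{inv.ring.pol.low.degrees} is phrased for a good module called ``$V$'', whereas in the present lemma it is $V^*$ that is good, so the corollary must be invoked with $V^*$ in that role, and one must keep straight that $\GL_{m+1}$ acts on the multiplicity space $K^{m+1}$ rather than on $V^*$. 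Once that is noted, the proof is a one-line application.
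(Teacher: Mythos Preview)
Your proposal is correct and matches the paper's own proof essentially verbatim: the paper simply says the lemma follows from Corollary~\ref{inv.ring.pol.low.degrees} because $V^*$ is assumed to be a good $G$-module. Your additional remarks about the inclusion $K^m\hookrightarrow K^{m+1}$ and the bookkeeping of applying the corollary with $V^*$ in place of $V$ are exactly the details implicit in that one-line citation.
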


\begin{proof}
This follows from Corollary~\ref{inv.ring.pol.low.degrees} because we assume that $V^*$ is a good $G$-module.
\end{proof}

\begin{corollary} \label{mimic}
Assume the hypothesis of Proposition~\ref{Weyl.technical}. Suppose $m \geq n$. Then, we have $\left< K[V^m]^G_d \right>_{\GL_{m+1}} = K[V^{m+1}]^G_d$ for all $d \leq \beta(K[V^{m+1}]^G)$.
\end{corollary}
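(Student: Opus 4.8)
The plan is to obtain this as an immediate consequence of the Lemma stated just above it, which asserts that $\left< K[V^m]^G_d \right>_{\GL_{m+1}} = K[V^{m+1}]^G_d$ as soon as $\kar(K) > kn$ for some $k \geq 2$ and $m \geq n\left\lceil \frac{d}{n(k-1)} \right\rceil$. (Recall that this Lemma, through Corollary~\ref{inv.ring.pol.low.degrees}, uses precisely that $V^*$ is a good $G$-module, which is the standing assumption of this subsection.) So the entire task is to exhibit a single value of $k$ for which both side conditions hold under the hypotheses of Proposition~\ref{Weyl.technical}: $Q \geq \tfrac12$, $\beta(K[V^\ell]^G) \leq \ell Q$ for every $\ell \geq 1$, and $p > 2Q(n+1) + n$.

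I would take $k = 1 + \frac{2(n+1)Q}{n}$. With this choice $kn = n + 2(n+1)Q = 2Q(n+1) + n < p$, so the characteristic hypothesis of the Lemma is met; and $k \geq 2$ holds because $\frac{2(n+1)Q}{n} \geq 1$, which in turn follows from $Q \geq \tfrac12 > \frac{n}{2(n+1)}$. This is the only place the bound $Q \geq \tfrac12$ is used. Note that $k$ need not be an integer: every result in the chain leading to the Lemma (the decomposition $\lambda = \mu_1 \attach \dots \attach \mu_s$ and Corollaries~\ref{cor:bdd.pol} and \ref{inv.ring.pol.low.degrees}) is valid for arbitrary real $k \geq 2$.

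It then remains to verify $m \geq n\left\lceil \frac{d}{n(k-1)} \right\rceil$ for every $d \leq \beta(K[V^{m+1}]^G)$. By hypothesis $\beta(K[V^{m+1}]^G) \leq (m+1)Q$, so $d \leq (m+1)Q$. Combining this with the elementary inequality $\lceil y \rceil \leq \max(1, 2y)$, valid for all $y \geq 0$ — trivial for $y \leq 1$, and for $y > 1$ because $\lceil y \rceil < y+1 \leq 2y$ — one gets
$$
n\left\lceil \frac{d}{n(k-1)} \right\rceil \;\leq\; \max\!\left(n,\; \frac{2d}{k-1}\right).
$$
Here $n \leq m$ by the assumption $m \geq n$, while
$$
\frac{2d}{k-1} \;\leq\; \frac{2(m+1)Q}{k-1} \;=\; \frac{n(m+1)}{n+1} \;\leq\; m ,
$$
the last step being the inequality $n(m+1) \leq m(n+1)$, i.e.\ $n \leq m$ again. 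Hence $n\left\lceil \frac{d}{n(k-1)} \right\rceil \leq m$, the Lemma applies, and the claim follows.

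I do not expect a genuine obstacle here; the content is entirely in the bookkeeping with the ceiling, and the crude but correct bound $\lceil y \rceil \leq \max(1, 2y)$ is exactly what forces the factor $2$ appearing in the characteristic hypothesis $p > 2Q(n+1) + n$ of Proposition~\ref{Weyl.technical}. A sharper analysis (splitting into small $d$ versus large $d$ and using $\lceil y \rceil \leq y+1$ directly on the latter) would only improve constants and is not needed for the statement.
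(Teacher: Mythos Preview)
Your proof is correct and follows essentially the same approach as the paper: the paper chooses the identical value $k = 2Q(1+1/n)+1$ and then splits into the two cases $\frac{d}{n(k-1)} \leq 1$ and $\frac{d}{n(k-1)} > 1$, which your inequality $\lceil y \rceil \leq \max(1,2y)$ encodes in a single line. Your explicit justification that $k \geq 2$ (via $Q \geq \tfrac12$) and your remark that $k$ need not be an integer are both valid and make the argument slightly more careful than the paper's presentation.
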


\begin{proof}
It follows from the hypothesis that  $d \leq \beta(K[V^{m+1}]^G) \leq  (m+1) Q $. Also, observe that by hypothesis, we have $p=\kar(K) > kn$, where $k = 2 Q(1 + 1/n) + 1 \geq 2$. So, in order to use the above lemma, we only need to show 
$$
m \geq  n  \left\lceil \frac{d}{n(k-1)}\right\rceil.
$$
We have two cases:

\begin{description}
\item [Case 1: $\frac{d}{n(k-1)} \leq 1$] In this case, we have $m \geq n = n \lceil \frac{d}{n(k-1)} \rceil$ by hypothesis. 

\item [Case 2: $\frac{d}{n(k-1)} > 1$] In this case, we have $\lceil \frac{d}{n(k-1)} \rceil < 2 \frac{d}{n(k-1)}$, hence it suffices to show 
$$
m \geq 2 n \frac{d}{n(k-1)} = \frac{2d}{k-1}.
$$
But this is the same as showing that $d \leq m (k -1) /2$. We know that $d \leq (m+1) Q$, so it suffices to show that $(m+1) Q \leq m (k-1)/2$. Rearranging, we need to show that $k \geq 2 Q(1 + 1/m) + 1$. But it is easy to see that $k =  2 Q(1 + 1/n) + 1 \geq 2 Q(1 + 1/m) + 1$ since $m \geq n$. 

\end{description}
\end{proof}

\begin{proof} [Proof of Proposition~\ref{Weyl.technical}]
This follows from Lemma~\ref{gen.set.to.all} and a repeated application of the above corollary.
\end{proof}

\begin{proof} [Proof of Theorem~\ref{msi.main}]
For $n =1$, the result is obvious. Assume $n \geq 2$, and now the result follows from Proposition~\ref{Weyl.technical} once we observe that the hypothesis is satisfied for $Q = n^3(n-1)$ by Theorem~\ref{msi-bounds}. One does need to be a little careful in applying Proposition~\ref{Weyl.technical} as $\dim(\Mat_{n,n}) = n^2$ and not $n$. 
\end{proof}

\begin{proof} [Proof of Proposition~\ref{unnecessary bound}]
One has to mimic the proof of Proposition~\ref{Weyl.technical}. First, note that we have to replace $n$ by $n^2$ because $\dim(\Mat_{n,n}) = n^2$. In Corollary~\ref{mimic}, one needs to adjust the assumption to $m \geq n^3$. Accordingly the two cases in the proof should be modified. Let us write $T = \frac{d}{n^2(k-1)}$. Then the two cases one should use are that either $\lceil T \rceil \leq n$ or $\lceil T \rceil \leq (1+\frac{1}{n}) T$. 
\end{proof}


\begin{remark}
Similar results can be formulated for various invariant rings and semi-invariant rings associated to quivers, by standard reductions to  be reduced to the case of matrix invariants and semi-invariants, see \cite{DM,DM2,DM-arbchar} for details.
\end{remark}

\section{Proof of main theorem} \label{Sec:final proofs}
The results in the previous section are perfectly good on their own. They are applicable to a number of cases in which we know an explicit $Q$ that satisfies the hypothesis of Proposition~\ref{Weyl.technical}. The importance of Theorem~\ref{main} is to provide a plethora of examples. The only catch though is that part $(1)$ of Theorem~\ref{main} is an existential result rather than an explicit one. It is an interesting problem to give any kind of explicit bound on the number $Q$. 


In characteristic zero, an approach to upper bounds for the degree of generators of invariant rings was proposed by Popov, and improved by the first author in \cite{Derksen1}. For a collection of polynomials $T$, we denote its zero locus by $\mathbb{V}(T)$.

\begin{definition}
Let $V$ be a representation of a reductive group $G$ over an algebraically closed field $K$. The null cone is given by
$$
\N(G,V) := \mathbb{V}\Big(\bigcup\limits_{d=1}^\infty K[V]^G_d\Big) \subseteq V.
$$
Further, define
$$
\gamma_K(G,V) := \min\left\{ D \ \Big|\ \mathbb{V}\Big(\bigcup\limits_{d=1}^D K[V]^G_d\Big) = \N(G,V) \right\}.
$$
\end{definition}

Now, let us study the null cone for the action of $G$ on several copies of $V$. It is easy to see that $\gamma_K(G,V^a) \leq \gamma_K(G,V^b)$ for $a \leq b$. However, we claim that the sequence stabilizes.

\begin{lemma}
Let $V$ be an $n$-dimensional representation of a reductive group $G$ over an algebraically closed field $K$. For all $a \geq 0$, we have 
$$
\gamma_K(G,V^a) \leq \gamma_K(G,V^n).
$$
\end{lemma}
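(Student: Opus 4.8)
The plan is to reduce the problem for $V^a$ with $a > n$ to the case $V^n$ by exploiting that the null cone is cut out by finitely many invariants, and that every point of $V^a = V \otimes K^a$ has its ``support'' contained in an $n$-dimensional coordinate subspace after applying an element of $\GL_a$. More precisely, I would first observe that we only need to treat $a \geq n$, since $\gamma_K(G,V^a)$ is monotone in $a$ and for $a \leq n$ the bound is immediate. So fix $a \geq n$ and set $D = \gamma_K(G,V^n)$; I want to show $\mathbb{V}\bigl(\bigcup_{d=1}^{D} K[V^a]^G_d\bigr) = \N(G,V^a)$.

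The key geometric point: given a tuple $v = (v_1,\dots,v_a) \in V^a$, the vectors $v_1,\dots,v_a$ span a subspace of $V$ of dimension $\leq n$, so there is $\sigma \in \GL_a$ (acting on the $K^a$ factor) such that $\sigma \cdot v \in V^n \subseteq V^a$ (i.e. has its last $a - n$ coordinates zero). Now suppose $v$ vanishes on all invariants of degree $\leq D$ in $K[V^a]^G$; I want to conclude $v \in \N(G,V^a)$, i.e. $v$ vanishes on \emph{all} positive-degree invariants. Because the set of decomposable/low-degree invariants generates, and more to the point because $\N(G,V^a)$ is $\GL_a$-stable (the null cone is cut out by a $\GL_a$-stable ideal, since $\GL_a$ permutes homogeneous invariants), it is equivalent to show $\sigma \cdot v \in \N(G,V^a)$. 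Restricting a degree-$\leq D$ invariant of $K[V^a]^G$ to the subspace $V^n$ gives a degree-$\leq D$ invariant of $K[V^n]^G$; conversely every invariant of $K[V^n]^G$ of degree $\leq D$ arises this way (or at least: the point $\sigma \cdot v$, viewed in $V^n$, vanishes on all of $K[V^n]^G_{\leq D}$). By definition of $D = \gamma_K(G,V^n)$ this forces $\sigma \cdot v \in \N(G,V^n) \subseteq \N(G,V^a)$, and hence $v \in \N(G,V^a)$ by $\GL_a$-stability. This gives the reverse inclusion $\mathbb{V}(\bigcup_{d \leq D} K[V^a]^G_d) \subseteq \N(G,V^a)$; the other inclusion is trivial, so $\gamma_K(G,V^a) \leq D$.

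The step I expect to require the most care is the compatibility between invariants on $V^a$ and invariants on $V^n$: one must check that the image of $\sigma \cdot v$ in $V^n$ vanishing on $K[V^n]^G_{\leq D}$ really does follow from $v$ vanishing on $K[V^a]^G_{\leq D}$. This uses the restriction map $K[V^a]^G \to K[V^n]^G$ (dual to the inclusion $V^n \hookrightarrow V^a$), which is surjective in each degree because the inclusion $V^n \hookrightarrow V^a$ is split $G$-equivariantly (project $K^a \to K^n$), so any invariant on $V^n$ extends to one on $V^a$ of the same degree. Thus $\sigma \cdot v$ killing $K[V^a]^G_{\leq D}$ implies its restriction kills $K[V^n]^G_{\leq D}$. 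Together with the $\GL_a$-invariance of the null cone (which one should state explicitly, as it is the linchpin), the argument closes. I would present it as: reduce to $a \geq n$; note $\N(G,V^a)$ is $\GL_a$-stable; move an arbitrary point of $\mathbb{V}(K[V^a]^G_{\leq D})$ into $V^n$ by $\GL_a$; apply the definition of $\gamma_K(G,V^n)$ using surjectivity of degreewise restriction; conclude.
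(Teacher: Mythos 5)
Your argument is correct, though it is packaged somewhat differently from the paper's. The paper works contrapositively: given $(v_1,\dots,v_a) \notin \N(G,V^a)$, it picks a coordinate sub-tuple $(v_{i_1},\dots,v_{i_r})$ that is a basis for $\spa(v_1,\dots,v_a)$, uses the Hilbert--Mumford criterion to conclude that this sub-tuple lies outside $\N(G,V^r)$, finds a degree-$\leq\gamma_K(G,V^r)$ invariant $f$ not vanishing there, and pulls $f$ back to $V^a$ along the coordinate projection $(w_1,\dots,w_a)\mapsto(w_{i_1},\dots,w_{i_r})$. You argue directly and replace the sub-tuple selection with a $\GL_a$-transformation (column reduction) pushing the point into $V^n$, replace Hilbert--Mumford with the observation that both $\N(G,V^a)$ and $\mathbb{V}\bigl(\bigcup_{d\leq D}K[V^a]^G_d\bigr)$ are $\GL_a$-stable, and replace the pullback along the coordinate projection with surjectivity of the degreewise restriction $K[V^a]^G_d \twoheadrightarrow K[V^n]^G_d$ coming from the split $G$-equivariant inclusion $V^n\hookrightarrow V^a$. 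These last two mechanisms are dual to one another and rest on the same coordinate projection, so the underlying geometry is identical; a genuine advantage of your route is that it dispenses with Hilbert--Mumford entirely, since all the stability you need follows from the fact that the $\GL_a$-action commutes with $G$ and preserves degree. One small thing to spell out: you explicitly state $\GL_a$-stability of $\N(G,V^a)$ to pass from $v$ to $\sigma\cdot v$ at the end, but at an earlier step you also need $\GL_a$-stability of $\mathbb{V}\bigl(\bigcup_{d\leq D}K[V^a]^G_d\bigr)$ to know that $\sigma\cdot v$ still kills all degree-$\leq D$ invariants; this follows by the identical argument, but deserves a sentence rather than being taken for granted.
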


\begin{proof}
Without loss of generality we can assume $a > n$. From the Hilbert--Mumford criterion for the null cone, it is easy to see that whether a tuple $(v_1,\dots,v_a) \in V^a$ is in the null cone or not depends only on the linear subspace $\spa (v_1,\dots,v_a) \subseteq V$. So, for any tuple $(v_1,\dots,v_a)$ not in the null cone, let $v_{i_1},\dots,v_{i_r}$ be a basis for $\spa (v_1,\dots,v_a)$. Then $(v_{i_1},\dots,v_{i_r})$ is not in the null cone for the action of $G$ on $V^r$, and observe that $r \leq n$. This gives an invariant $f \in K[V^r]^G$ such that $f(v_{i_1},\dots,v_{i_r}) \neq 0$. Now, define $\widetilde{f} \in K[V^a]^G$ by $\widetilde{f}(w_1,\dots,w_a) = f(w_{i_1},\dots,w_{i_r})$. Then $\widetilde{f}(v_1,\dots,v_a) = f(v_{i_1},\dots,v_{i_r}) \neq 0$. This means there is an invariant of degree $\leq \gamma_K(G,V^r) \leq \gamma_K(G,V^n)$ that doesn't vanish on $(v_1,\dots,v_a)$. This means that the null cone for $V^a$ is cut out by invariants of degree $\leq \gamma_K(G,V^n)$. This proves the lemma.
\end{proof}

If $V$ is a free $(G-\Z)$ module of rank $n$ for some reductive group scheme $G$ over $\Z$, then we write $\gamma_K(G,V) = \gamma_K(G_K,V_K)$ for simplicity. Recall the definition of $\delta_R(S)$ from Definition~\ref{delt}.

\begin{lemma}
Let $G$ be a reductive group scheme over $\Z$. Suppose $V$ is a free $(G-\Z)$ module of rank $n$. Then $\gamma_K(G,V) \leq  \delta_\Z(\Z[V]^G)$ for any algebraically closed field $K$.
\end{lemma}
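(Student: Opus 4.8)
The statement to prove is that $\gamma_K(G,V) \leq \delta_\Z(\Z[V]^G)$ for any algebraically closed field $K$, where $G$ is a reductive group scheme over $\Z$ and $V$ is a free $(G-\Z)$-module of rank $n$. The plan is to exploit the fact that $\Z[V]^G$ is a finitely generated $\Z$-algebra and that $\delta_\Z(\Z[V]^G)$, by Definition~\ref{delt}, is the smallest $d$ such that $\Z[V]^G$ is a finite extension of the subalgebra $(\Z[V]^G)_{\{d\}}$ generated by elements of degree $\leq d$. Write $D := \delta_\Z(\Z[V]^G)$, and let $f_1, \dots, f_r \in \Z[V]^G$ be a finite set of generators for $\Z[V]^G$ as a module over $(\Z[V]^G)_{\{D\}}$, which we may take homogeneous; the key point is that the null cone $\N(G_K,V_K)$ is cut out by the (vanishing of the) positive-degree part of $K[V]^G$, and we want to show it is already cut out by the images of degree-$\leq D$ invariants.

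The main step is the base-change comparison: one has a natural map $\Z[V]^G \otimes_\Z K \to K[V]^G$ which need \emph{not} be surjective (as the paper emphasizes right after Definition~\ref{delt}), so I cannot simply say "the generators of $K[V]^G$ are base-changed generators of $\Z[V]^G$." Instead I would argue at the level of the null cone directly. A point $v \in V_K$ lies in $\N(G_K,V_K)$ if and only if every homogeneous $h \in K[V]^G$ of positive degree vanishes at $v$. Suppose $v \notin \N(G_K,V_K)$; I want to produce an invariant of degree $\leq D$ not vanishing at $v$. Take any homogeneous $h \in K[V]^G$, of some degree $e$, with $h(v) \neq 0$. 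The finiteness of $\Z[V]^G$ over $(\Z[V]^G)_{\{D\}}$ passes to a finiteness of $K'[V]^G$ over its degree-$\leq D$ subalgebra for $K'$ the algebraic closure of $\Q$ (or just over $\Q$), via flat base change $-\otimes_\Z\Q$, and this controls the \emph{integral closure}/radical structure: every homogeneous element of $\Q[V]^G$ of degree $> D$ is integral over $(\Q[V]^G)_{\{D\}}$. The cleanest route, and the one I would pursue, is to show that $\N(G_K,V_K)$ coincides with the zero locus of the ideal generated by degree-$\leq D$ invariants, by checking this after reduction: since $\Z[V]^G$ is finite over $(\Z[V]^G)_{\{D\}}$, the positive-degree elements of $\Z[V]^G$ all lie in the radical of the ideal $I_{\{D\}}$ generated by the positive-degree part of $(\Z[V]^G)_{\{D\}}$ inside $\Z[V]^G$; hence some power of each $f_i$ lies in $I_{\{D\}}$. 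Tensoring this \emph{containment of ideals} with $K$ (which is exact for the relevant finitely generated modules, or can be arranged by clearing denominators / choosing the exponents uniformly), the images of the $f_i$ in $K[V]^G$ still lie in the radical of the image of $I_{\{D\}}$.

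The last step is to connect this back to all of $K[V]^G$, not just the image of $\Z[V]^G\otimes K$. Here I would invoke the previous lemma's style of argument together with the fact that the null cone is insensitive to which generating set one uses: $\N(G_K,V_K) = \mathbb{V}(K[V]^G_+)$, and the radical of the ideal generated by $K[V]^G_+$ in $K[V]$ equals the radical of the ideal generated by the image of $\Z[V]^G_+$ — this is because the null cone, by the Hilbert–Mumford criterion, is a closed $G$-stable cone, and it is known (e.g.\ by semicontinuity / spreading out, or by Seshadri's results that the paper cites for part (1) of Theorem~\ref{main}) that the null cone "spreads out" from the generic fiber, so that the scheme-theoretic null cone over $\Z$ base-changes correctly in large enough, hence in all, characteristics at the level of \emph{reduced} subschemes. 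Granting that, $\N(G_K,V_K)$ is cut out by the radical of $(\text{image of } \Z[V]^G_+)$, which by the middle step is the radical of $I_{\{D\}}\otimes K$, which is generated in degree $\leq D$; therefore $\gamma_K(G,V) \leq D = \delta_\Z(\Z[V]^G)$.

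The main obstacle is precisely the failure of $\Z[V]^G \otimes_\Z K \to K[V]^G$ to be surjective: I must be careful that the degree-$\leq D$ \emph{generation-up-to-radical} statement, which is cheap over $\Z$ (or $\Q$), transfers to a statement about the null cone of the possibly-larger ring $K[V]^G$. I expect this to require either (a) Seshadri's theorem on reductive group schemes over $\Z$ guaranteeing that taking invariants commutes with base change \emph{up to the radical on the null cone} (geometric reductivity in families), or (b) an explicit Hilbert–Mumford argument showing $\N(G_K,V_K)$ is the base change of $\N(G_\Q,V_\Q)$ as reduced schemes — since cocharacters and weights are defined over $\Z$, the instability locus is governed by the root datum, which is characteristic-independent. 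Either way, the arithmetic subtlety, not the commutative algebra, is where the real work lies.
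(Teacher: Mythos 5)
You correctly isolate the crux: $\Z[V]^G \otimes_\Z K \to K[V]^G$ need not be surjective, so one cannot simply base-change a generating set. You also correctly suspect that Seshadri's work is what bridges this. However, your proposed route — work over $\Z$ with radical ideal containments, base-change those containments, then argue that the null cone ``spreads out'' from the generic fiber — leaves the essential step unresolved, and the two escape hatches you list are not quite right. Option (b), a pure Hilbert--Mumford/root-datum argument, does not close the gap on its own: which one-parameter subgroups destabilize a given $v\in V_K$ depends on $v$ and the characteristic, not just on the root datum, so you cannot conclude $\N(G_K,V_K)$ is the reduced base change of $\N(G_\Q,V_\Q)$ for free. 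Radicals also do not commute with base change in general, so the step ``tensoring this containment of ideals with $K$'' is more delicate than stated.

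The paper's proof is organized differently and is cleaner. It first translates the null-cone quantity into a purely algebraic one via the equivalence $\gamma_K(G,V) = \delta_K(K[V]^G)$ (citing \cite[Lemma~2.5.5, Remark~4.7.2]{DK}: a graded subalgebra cuts out the null cone iff the invariant ring is finite over it). Then the reduction becomes: (i) $\Z[V]^G$ is finite over its degree-$\le D$ subalgebra, and finiteness is preserved under $-\otimes_\Z K$, so $\Z[V]^G\otimes_\Z K$ is finite over $K[G_1,\dots,G_m]$ with $\deg G_i\le D$; (ii) $K[V]^G$ is finite over $\Z[V]^G\otimes_\Z K$. Step (ii) is exactly the arithmetic input you were hunting for, and it is supplied by Seshadri's \emph{Proposition~6} (not Theorem~2, which only gives finite generation of $\Z[V]^G$): the vanishing locus in $V_K$ of the images of generators $F_1,\dots,F_r$ of $\Z[V]^G$ is precisely the null cone of $G_K$ on $V_K$. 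Combined with \cite{DK} again, this gives the finiteness in (ii). So you had the right idea that geometric reductivity over $\Z$ \`a la Seshadri is what makes the argument work, but the specific statement needed is the null-cone comparison (Proposition~6), and the clean way to use it is via the $\gamma_K = \delta_K$ dictionary rather than by trying to base-change radical ideal containments directly.
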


\begin{proof}
This follows in two steps. The first is to see that $\gamma_K(G,V) = \delta_K(K[V]^G)$. This follows from \cite[Lemma~2.5.5, Remark~4.7.2]{DK}. The second is to check that $\delta_K(K[V]^G) \leq \delta_\Z(\Z[V]^G)$. First, we note that $\Z[V]^G$ is finitely generated, see \cite[Theorem~2]{Seshadri}, so in particular, we know that $\delta_\Z(\Z[V]^G) < \infty$.

By definition of $\delta_\Z(\Z[V]^G)$, we have that $\Z[V]^G$ is a finite extension over $\Z[G_1,\dots,G_m]$ where $G_i$ are homogeneous of degree $\leq \delta_\Z(\Z[V]^G)$. But finite extensions are preserved under base change, so $\Z[V]^G \otimes_\Z K$ is a finite extension over $\Z[G_1,\dots,G_m] \otimes_\Z K = K[G_1,\dots,G_s]$.  Hence, if we show that $K[V]^G$ is a finite extension of $\Z[V]^G \otimes_\Z K$, then it follows that $K[V]^G$ is a finite extension of $K[G_1,\dots,G_s]$. So, we can conclude that $\delta_K(K[V]^G) \leq \max\{\deg(G_i)\} \leq \delta_\Z(\Z[V]^G)$.

So, all it remains is to prove that $K[V]^G$ is a finite extension of $\Z[V]^G \otimes_\Z K$. As we noticed before, $\Z[V]^G$ is finitely generated, so $\Z[V]^G = \Z[F_1,\dots,F_r]$, and so $\Z[V]^G \otimes_\Z K = K[F_1,\dots,F_r]$. We see from \cite[Proposition~6]{Seshadri} that $\mathbb{V}(F_1,\dots,F_r) \subseteq V_K$ is the null cone for the action of $G_K$ on $V_K$. Again, by\cite[Lemma~2.5.5, Remark~4.7.2]{DK}, we see that $K[V]^G$ is a finite extension of $K[F_1,\dots,F_r]=\Z[V]^G \otimes_\Z K$ as required.
\end{proof}

Combining the above two lemmas, we get the following result.

\begin{corollary} \label{comparetoZ}
 Let $G$ be a reductive group scheme over $\Z$. Suppose $V$ is a free $(G-\Z)$ module of rank $n$. Then we have $\gamma_K(G,V^a) \leq \gamma_\Z(\Z[V^n]^G)$ for all algebraically closed fields $K$ and for all $a$.
\end{corollary}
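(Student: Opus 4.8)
The plan is to chain together the two preceding lemmas in this section, being careful about the two different ambient spaces ($V^a$ versus $V^n$) and the passage from $K$ to $\Z$. First I would invoke the first lemma of this section, which says $\gamma_K(G, V^a) \leq \gamma_K(G, V^n)$ for all $a$; here the point is that membership in the null cone depends only on the span of the tuple, so any non-null tuple in $V^a$ is detected by an invariant pulled back from $V^r$ with $r \leq n$, and hence from $V^n$. This reduces the problem to comparing $\gamma_K(G, V^n)$ with $\gamma_\Z(\Z[V^n]^G)$.

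Next I would apply the second lemma of the section to the free $(G\text{-}\Z)$-module $V^n$ (which is itself a free $(G\text{-}\Z)$-module of rank $n^2$): it gives $\gamma_K(G, V^n) \leq \delta_\Z(\Z[V^n]^G)$. The key ingredients there are that $\Z[V^n]^G$ is finitely generated (Seshadri), that finite extensions are stable under base change, and that by \cite[Proposition~6]{Seshadri} the vanishing locus of a generating set for $\Z[V^n]^G$ base-changes to the null cone over $K$, so that $K[V^n]^G$ is a finite extension of $\Z[V^n]^G \otimes_\Z K$; combined with $\gamma_K = \delta_K$ from \cite[Lemma~2.5.5, Remark~4.7.2]{DK} this yields the displayed inequality. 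Finally I would note that $\delta_\Z(\Z[V^n]^G) = \gamma_\Z(\Z[V^n]^G)$ — this is again the identity $\gamma = \delta$, now applied over $\Z$, or equivalently it is the definition of $\gamma_\Z(\Z[V^n]^G)$ as written in the statement of the corollary. Stringing these three inequalities together gives $\gamma_K(G, V^a) \leq \gamma_K(G, V^n) \leq \delta_\Z(\Z[V^n]^G) = \gamma_\Z(\Z[V^n]^G)$, which is exactly the claim.

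I do not expect a serious obstacle here, since the corollary is essentially a formal bookkeeping step assembling results already proved. The one point requiring a little care is the bookkeeping of ranks and ambient spaces: in the first lemma one applies the rank-$n$ statement to the representation $V$, whereas in the second lemma one must apply the comparison-to-$\Z$ statement to $V^n$ rather than to $V$, and one should check that $V^n$ is indeed a free $(G\text{-}\Z)$-module so that the hypotheses of that lemma are met. A secondary subtlety is making sure the notation $\gamma_\Z(\Z[V^n]^G)$ — which, following the convention introduced just before, means $\delta_\Z(\Z[V^n]^G)$ — is used consistently, so that the final equality in the chain is a tautology rather than something needing proof.

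\begin{proof}
Fix an algebraically closed field $K$ and an integer $a$. By the first lemma of this section applied to the $n$-dimensional representation $V_K$ of $G_K$, we have
$$
\gamma_K(G, V^a) = \gamma_K(G_K, V_K^a) \leq \gamma_K(G_K, V_K^n) = \gamma_K(G, V^n).
$$
Now $V^n$ is a free $(G\text{-}\Z)$-module (of rank $n^2$), so the second lemma of this section applies with $V$ replaced by $V^n$, giving
$$
\gamma_K(G, V^n) \leq \delta_\Z(\Z[V^n]^G).
$$
By the convention introduced just before this corollary, $\gamma_\Z(\Z[V^n]^G)$ denotes $\delta_\Z(\Z[V^n]^G)$ (equivalently, this is the identity $\gamma = \delta$ of \cite[Lemma~2.5.5, Remark~4.7.2]{DK} applied over $\Z$). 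Combining the two displayed inequalities yields
$$
\gamma_K(G, V^a) \leq \gamma_\Z(\Z[V^n]^G),
$$
as claimed.
\end{proof}
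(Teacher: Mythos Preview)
Your proof is correct and follows exactly the same route as the paper: chain the first lemma ($\gamma_K(G,V^a)\leq\gamma_K(G,V^n)$) with the second lemma applied to $V^n$ ($\gamma_K(G,V^n)\leq\delta_\Z(\Z[V^n]^G)$). The only subtlety is the symbol $\gamma_\Z(\Z[V^n]^G)$ in the statement, which the paper never actually defines; you rightly treat it as $\delta_\Z(\Z[V^n]^G)$ (this is how the corollary is used in the proof of Theorem~\ref{main}), though strictly speaking no such ``convention'' is stated just before the corollary --- it appears simply to be a notational slip in the paper rather than an identity requiring \cite[Lemma~2.5.5]{DK} over $\Z$.
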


\begin{proof}
From the previous two lemmas, we get $\gamma_K(G,V^a) \leq \gamma_K(G,V^n) \leq \gamma_\Z(\Z[V^n]^G)$. 
\end{proof}

We now discuss how the bounds for invariants defining the null cone translate into bounds for the degrees of generators. Let us state the main result in \cite{Derksen1} in a slightly different way.

\begin{theorem} \label{Derksen.bound}
Assume $K$ is an algebraically closed field of characteristic $0$. Let $V$ be an $n$-dimensional representation of a reductive group $G$ over $K$. Then we have
$$
\beta(K[V]^G) \leq \max\{2, {\textstyle \frac{3}{8}} n \gamma_K(G,V)^2\}.
$$
\end{theorem}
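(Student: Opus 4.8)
Write $A = K[V]^G$ and $\sigma = \gamma_K(G,V)$. The plan is to exhibit a small ``polynomial-type'' subring over which $A$ is finite, using the defining equations of the null cone together with the Reynolds operator (this is where $\kar K = 0$ enters), and then to control the module generators by graded Nakayama; the only genuinely hard step is a degree estimate for a certain Artinian quotient.

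First I would dispose of the degenerate case $\tfrac{3}{8}n\sigma^2 \le 2$ directly (in particular $\sigma = 0$ forces $A = K$), and otherwise assume $\sigma \ge 1$, so $\N(G,V) \subsetneq V$. By definition of $\sigma$ there are homogeneous invariants of degree $\le \sigma$ whose common vanishing locus is $\N(G,V)$; since $\N(G,V) \subseteq V$ has codimension $\le n = \dim V$, a prime-avoidance argument lets me select among them $r \le n$ homogeneous invariants $f_1,\dots,f_r$ of degree $\le \sigma$ with $\mathbb{V}(f_1,\dots,f_r) = \N(G,V)$. Because $\kar K = 0$, the Reynolds operator $\mathcal R\colon K[V] \to A$ is an $A$-module retraction; applying $\mathcal R$ to relations shows that the radical of $(f_1,\dots,f_r)A$ in $A$ is the ideal of invariants vanishing on $\N(G,V)$, namely $A_+$. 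Equivalently, the graded morphism $\Spec A \to \mathbb{A}^r$ given by $(f_1,\dots,f_r)$ has a zero-dimensional fibre over the vertex, so $\bar A := A/(f_1,\dots,f_r)A$ is a finite-dimensional graded local ring and, by graded Nakayama, $A$ is a finite $R$-module over $R := K[f_1,\dots,f_r]$, generated by lifts of a homogeneous $K$-basis of $\bar A$. Since $R$ is generated by $f_1,\dots,f_r$ (all of degree $\le\sigma$), this already yields
$$\beta(A)\ \le\ \max\{\sigma,\ \operatorname{top}(\bar A)\},$$
where $\operatorname{top}(\bar A)$ denotes the largest degree in which $\bar A$ is nonzero. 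Thus everything reduces to the estimate $\operatorname{top}(\bar A) \le \tfrac{3}{8}n\sigma^2$.

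This last estimate is the heart of the matter and the step I expect to be the main obstacle. The naive idea — complete a homogeneous system of parameters $f_1,\dots,f_d$ for $A$ of degrees $\le\sigma$ (with $d = \dim A$) to a homogeneous system of parameters for $K[V]$ by $n-d$ generic linear forms, and then bound $\operatorname{top}(\bar A)$ by the top degree of $K[V]$ modulo such a system, i.e.\ by $\sum_i(d_i-1)$, using the $A$-module splitting $K[V] = A \oplus M$ — only works when $\N(G,V)$ is equidimensional of the expected dimension; in general the null cone is too large for this. Instead I would argue with the Koszul complex of $f_1,\dots,f_r$ over $A$, or, concretely, push the relations $h^{k} \in (f_1,\dots,f_r)K[V]$ coming from $h \in A_+ \subseteq \sqrt{(f_1,\dots,f_r)K[V]}$ down to $A$ via $\mathcal R$ (giving $h^k \in (f_1,\dots,f_r)A$), and carefully track both the exponents $k$ that occur and the degrees $d_i \le \sigma$. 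Balancing the contribution of a parameter subsystem against that of the remaining equations, and optimizing where to split, should produce a bound of the shape $c\,n\sigma^2$ with $c = \tfrac{3}{8}$. Carrying out this bookkeeping in full, and checking the boundary cases, is where essentially all the work of the proof lies.
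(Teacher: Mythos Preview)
The paper does not prove this theorem; it is quoted from \cite{Derksen1}. What the paper does record, immediately after the statement, are the two structural ingredients on which that proof rests: the Hochster--Roberts theorem that $A=K[V]^G$ is Cohen--Macaulay in characteristic zero, and Kempf's theorem that the Hilbert series of $A$ is a proper rational function. Your outline has the right overall shape --- produce a subring $R\subseteq A$ generated in low degree over which $A$ is finite, and then bound $\operatorname{top}(\bar A)$ for $\bar A=A/R_+A$ --- but at the decisive step you reach for the wrong tools.

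The gap is your Step~3. Without Cohen--Macaulayness there is no a priori control on $\operatorname{top}(\bar A)$: the Koszul complex of $f_1,\dots,f_r$ on $A$ need not be acyclic, and ``pushing relations $h^{k}\in (f_i)K[V]$ down via $\mathcal R$'' only produces exponents $k$ of Nullstellensatz type, which are not polynomial in $\sigma$; no amount of bookkeeping turns that into $\tfrac{3}{8}n\sigma^2$. In \cite{Derksen1} one instead takes a homogeneous system of parameters $p_1,\dots,p_d$ for $A$; Cohen--Macaulayness makes $A$ \emph{free} over $K[p_1,\dots,p_d]$, so that $\operatorname{top}(\bar A)$ is exactly the degree of the numerator of $H_A(t)=P(t)/\prod_i(1-t^{d_i})$, and Kempf's properness then gives $\deg P<\sum_i d_i$. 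The constant $\tfrac{3}{8}$ comes from a further estimate on how small $\sum_i d_i$ can be made for a suitably chosen system of parameters. In fact your ``naive idea'' is closer to the actual argument than your proposed fix: the correct move is not to pass to $K[V]$ and worry about the geometry of $\N(G,V)$, but to stay inside $A$ and replace ``$K[V]$ is regular'' by ``$A$ is Cohen--Macaulay'' together with Kempf's bound; once you do that, the equidimensionality issue you flag disappears.
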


There are two reasons that we need characteristic zero in the above statement. The first is that invariant rings for reductive groups are Cohen--Macaulay in characteristic zero, see \cite{HR}. The second is Kempf's result that the Hilbert series is a proper rational function, see \cite{Kempf}. In the situations that we are interested in, these required ingredients are true in positive characteristic as well. The Cohen--Macaulay condition was addressed by Hashimoto in \cite{Hashimoto}.

\begin{theorem} [\cite{Hashimoto}]
Suppose $G$ is a reductive group over an algebraically closed field $K$. Suppose $V$ is a representation of $G$ such that $K[V] = \Sym(V^*)$ is a good $G$-module. Then $K[V]^G$ is strongly F-regular, and in particular Cohen--Macaulay.
\end{theorem}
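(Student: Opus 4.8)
The statement is a positive-characteristic one, since strong $F$-regularity is only defined when $\operatorname{char}K = p > 0$; so assume this (in characteristic zero the Cohen--Macaulay conclusion is the Hochster--Roberts theorem). Write $S = K[V] = \Sym(V^*)$ and $R = S^G$. The plan is to deduce the result from three ingredients: (i) $S$ is strongly $F$-regular; (ii) $R$ is a direct summand of $S$ as an $R$-module; and (iii) a direct summand of a strongly $F$-regular ring is strongly $F$-regular. Ingredient (i) is immediate, as $S$ is a polynomial ring over a perfect field, hence regular; ingredient (iii) is a theorem of Hochster--Huneke. The ``in particular'' is then free, since strongly $F$-regular rings are Cohen--Macaulay (and normal). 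So the entire content is ingredient (ii): the inclusion $R \hookrightarrow S$ splits as a map of $R$-modules, i.e.\ there is an $R$-linear retraction $\rho \colon S \to R$.

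In characteristic zero this retraction is the Reynolds operator; in characteristic $p$ it can genuinely fail for reductive $G$ --- this is precisely why invariant rings need not be Cohen--Macaulay in general --- and the hypothesis that $S$ is a good $G$-module is exactly what is needed to rescue it. First I would set up the $G$-module skeleton. Each graded piece $S_d = \Sym^d(V^*)$ is a $G$-direct summand of the good module $S$, hence itself good; by Kempf's vanishing theorem $H^{>0}(G, \nabla(\lambda)) = 0$, so the functor $(-)^G$ is exact on good $G$-modules. Using the reordering lemma for good filtrations one arranges a good filtration of $S$ whose bottom term $T$ has all subquotients equal to the trivial module (so $T$ is itself trivial, because $H^1(G,K) = 0$ for connected reductive $G$), while the quotient $S/T$ has all subquotients $\nabla(\lambda)$ with $\lambda \neq 0$, so $(S/T)^G = 0$ since $\nabla(\lambda)$ has simple socle $L(\lambda) \neq K$. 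Hence $R = S^G = T$ lies in $S$ as a $G$-submodule whose complement carries no invariants --- but this is only a statement about $G$-module structure, not yet about $R$-module structure.

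The hard step, which I expect to be the main obstacle, is upgrading this $G$-module picture to an $R$-linear splitting; the passage is genuinely subtle, as witnessed by the fact that it is false without the good hypothesis. The plan here is a Frobenius argument producing ``partial Reynolds operators''. For $e \gg 0$ the ring $S$ is module-finite and free over the Frobenius-power subring $S^{[p^e]}$, which --- being the image of the $G$-equivariant $e$-fold Frobenius, hence essentially the $e$-th Frobenius twist of $S$ --- is again a good $G$-module, as Frobenius twisting carries $\nabla(\lambda)$ to $\nabla(p^e\lambda)$. One then studies the $G$-module of $S^{[p^e]}$-linear maps $S \to S^{[p^e]}$, shows, via the good-filtration calculus for such $\operatorname{Hom}$-modules together with exactness of $(-)^G$, that its $G$-invariants are large enough to contain a $G$-equivariant Frobenius splitting of $S$, and observes that such a splitting $S \to S^{[p^e]}$ restricts to $R \to R^{[p^e]}$; composing with Frobenius and letting $e \to \infty$ assembles the desired $R$-linear retraction $S \to R$ (in fact showing $R$ is $F$-split, reproving $F$-purity). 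The delicate part is the good-filtration bookkeeping for the $\operatorname{Hom}$-modules over the Frobenius subring, and the verification that the extracted $G$-invariant map is genuinely a retraction onto $R$ rather than merely a map landing in it; everything else is formal.
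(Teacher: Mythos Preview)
The paper does not prove this theorem at all; it is quoted verbatim from Hashimoto and used as a black box. So there is no ``paper's own proof'' to compare your attempt against, only Hashimoto's original.

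That said, your high-level architecture --- ingredients (i), (ii), (iii), with all the content in the $R$-linear retraction $\rho\colon S\to R$ --- is exactly Hashimoto's, and your second paragraph is a correct and useful reduction: the good-filtration hypothesis isolates $R=S^G$ as the trivial block of $S$, and the exactness of $(-)^G$ on good modules (via Kempf vanishing) is indeed the engine. You are also right that this is only a $G$-module statement so far.

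Where your sketch parts ways with Hashimoto, and acquires a genuine gap, is the third paragraph. Hashimoto does \emph{not} manufacture the splitting through $G$-equivariant Frobenius splittings of $S$; he proves directly, from the good-filtration calculus, that $S^G\hookrightarrow S$ is split as an $S^G$-module map. Your Frobenius route, besides being a detour, rests on a false claim: it is \emph{not} true that Frobenius twisting sends $\nabla(\lambda)$ to $\nabla(p^e\lambda)$. That statement holds for simple modules $L(\lambda)$ by Steinberg, but $\nabla(\lambda)^{[e]}$ and $\nabla(p^e\lambda)$ have different characters in general (already for $\SL_2$ in characteristic~$2$ one has $\nabla(1)^{[1]}\cong L(2)$, which is $2$-dimensional and not good, whereas $\nabla(2)$ is $3$-dimensional). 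Consequently the subring $S^{p^e}\subset S$, viewed as a $G$-submodule, need \emph{not} have a good filtration, and the ``good-filtration calculus for $\operatorname{Hom}$-modules over the Frobenius subring'' you invoke is not available. The step you flag as ``delicate'' is in fact broken as stated.
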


To get Kempf's result on the Hilbert series to arbitrary characteristic, we use a comparison to the characteristic zero case. In order to compare across characteristics, we will need the representation and the reductive group to be defined over $\Z$. So, we will work under the hypothesis of the Theorem~\ref{main} for the following result.

\begin{proposition}
Let $G$ be a split reductive group scheme over $\Z$, and $V$ a free $(G - \Z)$-module of rank $n$ such that $V^*$ is a good $G$-module. Let $K$ be an algebraically closed field such that $K[V]$ is a good $G_K$-module (for e.g. if $\kar(K) > n$ by Corollary~\ref{large.char.invring.good}). Then the Hilbert series for $K[V]^G$ is the same as the Hilbert series for $\C[V]^G$. In particular, it is a proper rational function.
\end{proposition}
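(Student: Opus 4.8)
The plan is to compare $K[V]^G$ directly with $\Z[V]^G$ and then with $\C[V]^G$, exploiting the good filtration hypothesis to control dimensions in each graded piece uniformly across characteristics. First I would recall from Corollary~\ref{bounded partitions} (with $W = K$, so $V\otimes W = V$) that $K[V]^G_d = \Sym^d(V^*)^G$ has a filtration whose subquotients are the trivial-module contributions; more precisely, combining the Doubilet--Rota--Stein filtration of $\Sym^d(V^*)$ with the fact that $V^*$ (hence each $S_\lambda(V^*)$, for $l(\lambda)\le n$ and $p>n$) is a good $G_K$-module, one gets that $\dim_K \Sym^d(V^*)^G$ equals the multiplicity of the trivial dual Weyl module $\nabla(0)$ in any good filtration of $\Sym^d(V^*)$. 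Since the formal characters of the dual Weyl modules $\nabla_\Z(\lambda)$ are given by the Weyl character formula and are therefore independent of the characteristic (Remark~\ref{defined over Z}), this multiplicity is a purely combinatorial quantity: it is the coefficient of the trivial weight when the character of $\Sym^d(V^*_\C)$, which is fixed by the Weyl module structure over $\Z$, is expanded in Weyl characters.

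The key step is therefore the identity
$$
\dim_K \Sym^d(V^*)^G \;=\; \dim_\C \Sym^d(V^*_\C)^G \qquad \text{for all } d,
$$
which follows because in characteristic zero $\Sym^d(V^*_\C)$ is semisimple, so its $G$-invariant part also computes the multiplicity of the trivial representation, and the multiplicities of all irreducibles (equivalently, Weyl modules) in $\Sym^d(V^*)$ are read off from the same characteristic-free character expansion. Summing over $d$ with the grading variable $t$ gives that the Hilbert series $\sum_d (\dim_K \Sym^d(V^*)^G)\,t^d$ equals $\sum_d (\dim_\C \Sym^d(V^*_\C)^G)\,t^d$, i.e. the Hilbert series of $K[V]^G$ and $\C[V]^G$ coincide. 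The final sentence then follows immediately from Kempf's theorem applied in characteristic zero: the Hilbert series of $\C[V]^G$ is a proper rational function, and hence so is the (identical) Hilbert series of $K[V]^G$.

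The main obstacle is making rigorous the claim that $\dim_K \Sym^d(V^*)^G$ is computed by a good filtration of $\Sym^d(V^*)$ over $K$ whose combinatorial data match the characteristic-zero decomposition. This requires: (i) that $\Sym^d(V^*)$ is a good $G_K$-module, which is Corollary~\ref{large.char.invring.good} (valid since $\kar K > n$, automatic from the hypothesis that $K[V]$ is a good $G_K$-module); (ii) that for a good module the dimension of the invariants equals the multiplicity of $\nabla(0)$, which is Lemma~\ref{good-prop}(3); and (iii) that this multiplicity is the same as in characteristic zero, which reduces to the characteristic-independence of Weyl characters together with the fact that $\Sym^d$ is a functor defined over $\Z$, so the character of $\Sym^d(V^*)$ is literally the same polynomial in the torus weights in every characteristic. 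Assembling (i)--(iii) carefully — and noting that over $\Z$ one may first decompose the character of $\Sym^d(V^*_\Z)$ into Weyl characters and then base-change — is the substance of the argument; the rest is bookkeeping with generating functions.
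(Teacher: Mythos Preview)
Your proof is correct and follows essentially the same route as the paper: both reduce $\dim_K K[V]_d^G$ to the multiplicity of $\nabla(0)$ in a good filtration of $K[V]_d$ via Lemma~\ref{good-prop}(3), and then observe that this multiplicity is a characteristic-free character computation because the Weyl characters and the character of $\Sym^d(V^*)$ are the same over $K$ and over $\C$. The opening detour through Corollary~\ref{bounded partitions} and the Doubilet--Rota--Stein filtration with $W=K$ is unnecessary (for a single copy of $V$ that filtration is trivial), but the clean summary you give in (i)--(iii) is exactly the paper's argument.
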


\begin{proof}
This fact that the Hilbert series of $K[V]^G$ is the same as the Hilbert series for $\C[V]^G$ has been observed before in the context of matrix invariants and matrix semi-invariants in  \cite{Donkin,Domokos,DM-arbchar}. The same proof works, and we sketch it.

First note that the group $G_\C$ is reductive, and hence linearly reductive. In particular, all $G_\C$ modules have good filtrations. Since $K[V]_d$ (resp. $\C[V]_d$) has a good filtration, to get $\dim(K[V]^G)$ (resp. $\dim\C[V]^G_d$), one has to write the character of $K[V]_d$ (resp. $\C[V]_d$) as a sum of characters of dual Weyl modules and read off the coefficient of the trivial character (see Lemma~\ref{good-prop}). The characters of the dual Weyl modules are independent of characteristic, so the two computations are identical. 
\end{proof}
Thus the degree bound for invariants in Theorem~\ref{Derksen.bound} continues to hold in positive characteristic if we add some hypothesis so as to use the above results. So, we get the following:

\begin{corollary}
Let $G$ be a split reductive group scheme over $\Z$, and $V$ a free $(G - \Z)$-module of rank $n$ such that $V^*$ is a good $G$-module. Let $K$ be an algebraically closed field such that $K[V]$ is a good $G_K$-module. Then we have
$$
\beta(K[V]^G) \leq \max\{2, {\textstyle \frac{3}{8}} n \gamma_K(G,V)^2\}.
$$
\end{corollary}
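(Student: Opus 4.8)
The plan is to assemble this corollary directly from the ingredients developed just before it, treating it as essentially a bookkeeping exercise that transports Theorem~\ref{Derksen.bound} to positive characteristic. Concretely, I would argue as follows. We are given that $G$ is a split reductive group scheme over $\Z$, that $V$ is a free $(G-\Z)$-module of rank $n$, and that $V^*$ is a good $G$-module, together with the hypothesis on $K$ that $K[V]=\Sym(V^*)$ is a good $G_K$-module. The proof of Theorem~\ref{Derksen.bound} (the main result of \cite{Derksen1}) uses only two facts about $K[V]^G$ beyond the combinatorial/geometric input $\gamma_K(G,V)$: first, that $K[V]^G$ is Cohen--Macaulay, and second, that the Hilbert series of $K[V]^G$ is a proper rational function. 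So the strategy is to check that both of these hold in our setting and then quote the bound verbatim.

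First I would invoke Hashimoto's theorem (stated in the excerpt): since $K[V]=\Sym(V^*)$ is a good $G_K$-module by hypothesis, $K[V]^G$ is strongly $F$-regular, hence Cohen--Macaulay. Second, I would invoke the Proposition immediately preceding this corollary: under exactly the present hypotheses (split reductive over $\Z$, $V^*$ good, $K[V]$ a good $G_K$-module), the Hilbert series of $K[V]^G$ equals the Hilbert series of $\C[V]^G$, and in particular it is a proper rational function (Kempf's property over $\C$, transported across characteristic by comparing characters of dual Weyl modules, which are characteristic-independent). With these two structural facts in hand, the quantitative argument of \cite{Derksen1} applies word for word over $K$ with the null-cone parameter $\gamma_K(G,V)$ in place of its characteristic-zero analogue, yielding
$$
\beta(K[V]^G) \leq \max\{2, {\textstyle \frac{3}{8}} n \gamma_K(G,V)^2\}.
$$

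The only real subtlety — and the step I would be most careful about — is making sure that the proof in \cite{Derksen1} genuinely uses \emph{only} the two cited ingredients and nothing else that is secretly characteristic-zero (for instance, complete reducibility of $G$-modules, or vanishing of higher cohomology). My plan here is to trace the Derksen bound argument: it proceeds by using the invariants of degree $\leq \gamma_K(G,V)$ to cut out the null cone, building a homogeneous system of parameters of controlled degree, and then using Cohen--Macaulayness together with the proper-rational-function form of the Hilbert series to bound the degrees of the module generators over this system of parameters. None of these steps needs linear reductivity of $G_K$ itself — only the two properties we have verified. Hence the corollary follows, and I would phrase the final proof as a three-line citation: Cohen--Macaulayness from Hashimoto's theorem, proper-rational Hilbert series from the preceding proposition, and then the bound from \cite{Derksen1} applied in this generality.
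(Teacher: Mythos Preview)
Your proposal is correct and matches the paper's approach exactly: the paper explicitly identifies Cohen--Macaulayness and the proper-rational Hilbert series as the only characteristic-zero ingredients in the proof of Theorem~\ref{Derksen.bound}, supplies them via Hashimoto's theorem and the preceding Proposition respectively, and then states this corollary without further argument. Your care in verifying that \cite{Derksen1} uses nothing else characteristic-specific is precisely the point the paper is making when it says ``the degree bound for invariants in Theorem~\ref{Derksen.bound} continues to hold in positive characteristic if we add some hypothesis so as to use the above results.''
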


\begin{proof} [Proof of Theorem~\ref{main}]
To prove the first part, it suffices to show that $\delta_\Z(\Z[V^n]^G)$ is finite. This follows from the fact that $\Z[V^n]^G$  is finitely generated, see \cite[Theorem~2]{Seshadri}.

Let us now turn towards the second part. Let $K$ be an algebraically closed field with $\kar(K) > 2Q(n+1) + n$. Observe that that since $\kar(K) > n$ and $V^*$ is a good $G$-module, we have that  $K[V^m]$ is a good $G$-module for all $m$ by Lemma~\ref{lemma.after.cor}. So the above proposition applies and so for all $m$, we have 
$$
\beta(K[V^m]^G) \leq  \max\{2,{\textstyle \frac{3}{8}} mn \gamma_K(G,V^m)^2\} \leq \max\{2, m ({\textstyle \frac{3}{8}} n \delta_\Z(\Z[V^n]^G)^2)\} \leq m Q.
$$
The first inequality follows from the above proposition since $\dim(V^m) = mn$. The second inequality follows from Corollary~\ref{comparetoZ}, and the last inequality follows from the definition of $Q$. Since $\beta(K[V^m]^G) \leq mQ$, we can apply Proposition~\ref{Weyl.technical} and the second part of the theorem follows.
\end{proof}

To end with, we wish to emphasize an important future direction of research, namely, to produce a strong upper bound for $\delta_\Z(\Z[V^n])^G$. At the moment, we do not have any kind of explicit bound!

\subsection*{Acknowledgements}
We would like to thank Matyas Domokos, David Wehlau, Jerzy Weyman and Jakub Witaszek for helpful discussions.

\end{document}